 \newtheorem{theorem}{Theorem}
\newtheorem{proposition}[theorem]{Proposition}
 \newtheorem{definition}[theorem]{Definition}
 \newtheorem{remark}[theorem]{Remark}
 \newtheorem{lemma}[theorem]{Lemma}
 \newtheorem{corollary}[theorem]{Corollary}
 \newtheorem{example}[theorem]{Example}
\title{Wave fronts and caustics in the tropical plane}
\author{Grigory Mikhalkin, Mikhail Shkolnikov}
\newcommand{\Z}{\mathbb Z}
\newcommand{\R}{\mathbb R}
\newcommand{\N}{\mathbb N}
\newcommand{\Q}{\mathbb Q}
\newcommand{\T}{\mathbb T}
\newcommand{\E}{\mathbb E}
\newcommand{\nr}{N_{\mathbb R}}
\newcommand{\mr}{M_{\mathbb R}}
\newcommand{\dd}{\partial}
\newcommand{\fan}{\operatorname{Fan}}
\newcommand{\et}{{\mathbb E}^2_{\mathbb T}}
\newcommand{\tr}{\operatorname{tr}}
\begin{document}
\begin{abstract}
The paper studies intrinsic geometry in the tropical plane. Tropical structure in the real affine $n$-space is determined by the integer tangent vectors. Tropical isomorphisms are affine transformations preserving the integer lattice of the tangent space, they may be identified with the group $\operatorname{GL_n}(\Z)$ extended by arbitrary real translations.
This geometric structure allows one to define {\em wave front propagation} for boundaries of convex domains. Interestingly enough, an arbitrary compact convex domain in the tropical plane evolves to a finite polygon after an arbitrarily small time. The caustic of a wave front evolution is a tropical analytic curve. The paper studies geometry of the tropical wave fronts and caustics.
In particular, we relate the caustic of a tropical angle to the continued fraction expression of its slope, and treat it as a tropical trigonometry notion.
\end{abstract}
\address{Universit\'e de Gen\`eve,  Math\'ematiques, rue du Conseil-G\'en\'eral 7-9, 1205 Gen\`eve, Suisse}
\email{grigory.mikhalkin@unige.ch} 
\address{Institute of Mathematics and Informatics, Bulgarian Academy of Sciences, Acad. G. Bonchev Str., Bl. 8, 1113 Sofia, Bulgaria}
\email{m.shkolnikov@math.bas.bg} 
\thanks{Research is supported in part by 
the Swiss National Science Foundation grants 200400 and 204125 (G.M.), and by the Simons Foundation International grant no. 992227, IMI-BAS (M.S.).}
\maketitle

\section{Introduction}\label{sec_intro}
Recall that the {\em Riemannian metric} is the geometric structure consisting of distinguishing the unit vectors in each tangent space $T_pX$ of a smooth manifold $X$, $p\in X$. If $X=\R^2$ then $T_pX=\R^2$, and the equation ${x^2+y^2}=1$ defines the sphere of unit vectors in each tangent spaces in a translation-invariant way. Using this data we can introduce the norm of tangent vectors, the distance between points, and the angles between directions from the same point.
The resulting geometric space is the classical Euclidean plane $\E^2$. 

What happens if instead of distinguishing the sphere of unit vectors we distinguish {\em the lattice of integer vectors} in each $T_pX$? This encodes the so-called {\em tropical structure} on $X$ \footnote{We can speak of {\em almost tropical structure} where the lattice varies smoothly when $p$ varies, and of just {\em tropical structure} if it is {\em integrable} in the sense that locally it comes from some $\R^n$.}.
In particular, we can do it for $\R^2$ by distinguishing the tangent vectors with both integer coordinates. The result is called {\em tropical plane} $\et\approx\R^2$. The tropical structure in $\E^2_{\T}$ is invariant with respect to any translation as well as by the action of $\operatorname{GL}_2(\Z)$ in $\R^2$. Translations and $\operatorname{GL}_2(\Z)$ generate the group of {\em tropical isomorphisms} $\operatorname{Iso}(\et)$. Clearly, all tropical isomorphisms conserve the area. 

Consider some geometric objects in $\et$. All lines of rational slope in $\et$ are equivalent. In the same time, there are different equivalent classes of lines of irrational slopes. In fact, it is easy to see that these equivalence classes are detected by the (infinite) tails of the continued fractions of their slopes. 

Suppose that $L\subset\et$ is a line of rational slope, and $p_1,p_2\in L$. We may define the {\em tropical distance} between $p_1$ and $p_2$ as the Euclidean distance divided by the Euclidean length of the primitive integer vector in the direction of $L$. Clearly, this distance is conserved under tropical isomorphisms. 

Suppose that $L_1, L_2\subset \et$ are two parallel lines of rational slope. We may measure the distance between $L_1$ and $L_2$ as follows. Consider the parallelogram spanned by two unit (in the sense of tropical distance) intervals, one on $L_1$ and one on $L_2$. Its area is distance between $L_1$ and $L_2$. Once again, this distance is invariant by the action of $\operatorname{Iso}(\et)$. 
This distance allows us to define the evolution of tropical half-plane of rational slope by \eqref{eq_halfplaneshift} and thus the evolution of convex domains in $\E^2_{\T}$ in Definition \ref{def_continousfront}.

It is instructive to compare the resulting tropical wave front evolution with the classical wave front evolution defined by the Eikonal equation as well as its caustic, see \cite{Arnold-caustics}, \cite{ArnoldPDE}. Given a smooth arc $\alpha$ in $\E^2$ with a choice of a side of $\alpha$ (i.e. its {\em coorientation}), we may define its time $t$ evolution by taking all tangent lines to $\alpha$, moving the line Euclidean distance $t$ in the direction of the coorientation, and defining the result of time $t$ evolution of $\alpha$ as the common tangent curve to all the moved lines. Clearly, in this definition we do not need to consider the tangent lines at all points of $\alpha$, it is sufficient to do it in a dense set of points. Definition \ref{def_continousfront} does this, but moves the lines according to the tropical distance we defined above, thus using tropical instead of Euclidean structure. In this paper, we do it only in the case when $\alpha$ is convex. 
It is interesting that, as shown in Theorem \ref{fan-description}, the arbitrary small time tropical wave front evolution of an admissible domain is a polygonal domain, even if the initial front is smooth. Note that in the Euclidean case we have the opposite phenomenon: the arbitrary small time Euclidean wave front evolution is smooth, even if the initial front is polygonal.

The tropical caustic is traced by the vertices of the tropical wave fronts in all times just like the Euclidean caustic is traced by singular points of the Euclidean wave fronts in all times, but in addition, it contains {\em the final locus of the tropical wave front} that can be a point, an interval, a ray or a line (see Proposition \ref{finallocusclass}) in the case if the domain evolves to an empty set in finite time.
In this case, the final locus is the evolution of the wave front in final time $t_\Phi$ that is the maximal time when the result of evolution is non-empty.

While the wave front is polygonal for any time $0<t<t_\Phi$, we may turn it into a tropical curve if we attach to it the trajectories of the vertices of the wave fronts at all times smaller than $t$. The length of the resulting tropical curve is subject to the tropical version of Noether's formula (Theorem \ref{prop_NoetherCaustic}).
Note that for $t=t_\Phi$ this procedure recovers the caustic, it is a tropical curve by Theorem \ref{caustic-trop-curve}.

Part I of the paper discusses the basic notions of tropical trigonometry. Part II introduces and studies the tropical wave front evolution of the boundary of a closed convex domain in the tropical plane. Part III introduces and studies the corresponding caustic. Finally, Part IV relates the caustic of a tropical angle (which consists of several rays) to the continued fraction of its slope.

Study of tropical wave fronts and caustics was initiated in
\cite{Sh-diss}, where these notions were introduced, particularly, in the case of $\Q$-polygons. 
Theorems  \ref{thm-candom} and \ref{caustic-trop-curve} are restatements of results already obtained in \cite{Sh-diss}, but we include their proofs for the convenience of the reader. The formula \eqref{length-noether} was found in \cite{KaSh3}. In this paper we adapt an intrinsic tropico-geometric viewpoint on the wave front propagation, and pay special attention to the wave fronts and caustics of tropical angles (as a tool of tropical trigonometry).

Noticeably, tropical wave fronts appear as special cases of tropical curves that arize in the study of the Bak-Tang-Wiesenfeld sandpile model in the plane. The sandpile model itself is discrete and, accordingly, the evolution takes place at a regular lattice with a small mesh size. Nevertheless, the scaling limit of the sandpile model is governed by the laws of tropical geometry, see \cite{KaSh} and the references therein. We also refer to the paper \cite{KaSh2} where a certain {\em wave operator} is defined on tropical series.

There is another remarkable discretisation of the tropical wave front evolution. Given a convex lattice polygon in the plane, we may consider the convex hull of the lattice points in its interior, and then iterate the process (the so-called {\em interior hull iteration} or {\em onion skin peeling}), see e.g. \cite{Ca} and reference therein. Proposition \ref{prop_defequivdicrcont} asserts that this process agrees with the tropical wave front evolution in integer time. Accordingly, certain characteristics of convex lattice polygons based on the onion skin peeling evolution, such as {\em level} or {\em lifespan}, agree with the corresponding notions, such as {\em final time} or {\em age} that we introduce in this paper for more general closed convex domains in the plane.

In this paper we work with the tropical plane with a distinguished point (the origin). It can be obtained from an abstract lattice $N$, i.e. an Abelian group isomorphic to $\Z^2$, by taking the tensor product $\nr=N\otimes\R$. To obtain the abstract tropical plane $\et$ we need to forget the origin (i.e. to consider the torsor over $\nr$).
Presence of the origin in $\nr$ allow us to conveniently pass to the dual plane $\mr$ by setting $M=\operatorname{Hom}(N,\Z)$ and $\mr=M\otimes\R$.

\ \\

\noindent{\bf\Large Part I. Tropical trigonometry.}
\section{Tropical angles}
Working in $\nr$ is particularly convenient for {\em tropical angles}, also known as {\em tropical cones}, as we can present them in terms of rays originating at the origin.
The tropical angle is a closed strictly convex cone $\Sigma\subset\nr$ considered up to the automorphisms of the lattice $N$.

A tropical angle $\Sigma$ has two boundary rays. There is a number of positive integer characteristic we may associate to $\Sigma$ in the case when both of these rays are of rational slope, i.e. parallel to primitive integer vectors $v,w\in N$. Namely, the {\em determinant} of $\Sigma$ is the absolute value of the area of $v\wedge w$. The {\em width} of $\Sigma$ is the tropical length of $v-w\in N$ i.e. the largest integer $l\in\Z$ such that $\frac{v-w}l\in N$. The {\em height} of $\Sigma$ is the determinant divided by the width. Note that it is an integer number.

\begin{proposition}\label{prop-determ}
All tropical angles of determinant 1 are isomorphic. Also all tropical angles of determinant 2 are isomorphic. For any integer $d\ge 1$ the angle of height $1$ and width $d$ is unique (up to isomorphism).
However, for any $d\ge 3$ there exist angles of determinant $d$ and height greater than 1. 
In particular, there exist non-isomorphic angles of the same determinant.
\end{proposition}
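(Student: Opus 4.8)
The plan is to put $\Sigma$ into a normal form using the lattice automorphisms and then read off the three invariants. Since the two boundary rays are generated by primitive integer vectors $v,w\in N$, I would first apply an element of $\operatorname{GL}_2(\Z)$ carrying $v$ to $e_1=(1,0)$; this is possible because a primitive vector extends to a lattice basis. Writing the image of $w$ as $(p,q)$, strict convexity forces $q\neq 0$ (otherwise $w$ would be parallel to $v$). After the reflection $(x,y)\mapsto(x,-y)$, applied if $q<0$, and the shears $(x,y)\mapsto(x+by,y)$, all of which fix $e_1$, I can arrange $w=(p,d)$ with $d=q>0$, $0\le p<d$, and $\gcd(p,d)=1$. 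In this normal form one computes directly that the determinant equals $|v\wedge w|=d$, that the width equals the tropical length of $v-w=(1-p,-d)$, namely $\gcd(p-1,d)$, and hence that the height equals $d/\gcd(p-1,d)$.

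With this normal form the five assertions become elementary, the key point being that the determinant, width and height are isomorphism invariants, so it suffices to compare normal forms. For $d=1$ the only admissible parameter is $p=0$, giving the standard quadrant $\langle e_1,e_2\rangle$; for $d=2$ the only residue coprime to $d$ in range is $p=1$. In both cases the normal form is unique, so every angle of that determinant is isomorphic to it, hence all are mutually isomorphic. For the height-$1$ statement, height $1$ is equivalent to $\gcd(p-1,d)=d$, i.e. to $p\equiv 1\pmod d$, which in the range $0\le p<d$ singles out a single parameter for each $d$; thus the angle of height $1$ and width (hence determinant) $d$ is unique. For $d\ge 3$ the value $p=d-1$ is coprime to $d$ and distinct from $1$, and its width is $\gcd(d-2,d)=\gcd(2,d)\in\{1,2\}$, strictly less than $d$, so the corresponding angle has height $d/\gcd(2,d)>1$. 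Comparing it with the height-$1$ angle of the same determinant $d$ and invoking invariance of the height produces two non-isomorphic angles of equal determinant.

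To obtain the complete picture behind these statements I would also record the residual symmetry: $\langle e_1,(p,d)\rangle$ and $\langle e_1,(p',d)\rangle$ are isomorphic exactly when $p'\equiv p$ or $p'\equiv p^{-1}\pmod d$. This follows by analysing how an isomorphism acts on the unordered pair of boundary rays: if it preserves each ray it must fix $e_1$ and so be an upper-triangular shear, forcing $p'=p$; if it interchanges them, tracking the two columns of the defining matrix forces $p'p\equiv 1\pmod d$. I expect this ray-swapping case to be the only genuinely delicate point, since it requires care with orientations and with the $\det=-1$ part of $\operatorname{GL}_2(\Z)$. For the proposition itself, however, this involution $p\mapsto p^{-1}$ is not strictly needed; only the existence of the normal form and the invariance of the height are used, the classification serving merely to confirm that the listed families are pairwise non-isomorphic.
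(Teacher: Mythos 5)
Your proof is correct and follows essentially the same route as the paper: reduce the angle to a normal form by a lattice automorphism and read off the numerical invariants (determinant, width, height), whose invariance under isomorphism settles all five claims. The difference is purely organizational — you use one uniform normal form $\langle e_1,(p,d)\rangle$ with $0\le p<d$, $\gcd(p,d)=1$, whereas the paper argues case by case (a divisibility/parity argument for $d=2$, and for $d\ge 3$ the explicit example $v=-e_2$, $w=e_2+de_1$, which is exactly your $p=d-1$ cone written in a different basis).
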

\begin{proof}
Let $\Sigma\subset\nr$ be a tropical angle of determinant $d$.
If $d=1$ then $\Sigma$ is generated by a basis of $N$. All bases are isomorphic.
If $\Sigma$ is of height 1 then we have $v=e_2$ and $w=e_2+de_1$ for a basis $\{e_1,e_2\}$ of $N$. Thus in this case, $d$ determines the isomorphism class of $\Sigma$.
If $d=2$ then the width of $\Sigma$ cannot be 1 by divisibility argument (both $v$ and $w$ are non-divisible by two), so that its height is 1.
If $d>3$ then we can consider $v=-e_2$ and $w=e_2+de_1$. The width of the corresponding angle is the greatest common divisor of $d$ and $2$, and thus, at most, two. Thus the height of the corresponding angle is at least $d/2$.
\end{proof}

\begin{definition}
We call an angle of determinant 1 {\em the tropical right angle}. More generally, we call an angle of height 1 and width $d$ {\em the $A_{d-1}$-angle} (or, alternatively, {\em the $A_{d-1}$-cone}).
\end{definition}
By Proposition \ref{prop-determ}, all tropical right angles, and all $A_{d-1}$-angles are isomorphic. Their existence for any $d\in\N$ is straightforward.

\begin{definition}
Given a tropical angle $\Sigma\subset\nr$, {\em the dual angle} is 
$$\Sigma^*=\{q\in\mr\ |\ q(p)\ge 0\ \forall p\in\Sigma\}\subset\mr.$$
\end{definition}
Clearly, the determinant of an angle and its dual coincide. 
Note that $\mr$ is itself a tropical plane with the origin isomorphic to $\nr$ (though we do not have any preferred isomorphism).
Thus all tropical trigonometric notions apply equally well in $\mr$.
\begin{definition}
{\em The canonical angle} $\Sigma\subset\nr$ of determinant $d$ is the tropical angle dual to an $A_{d-1}$-angle in $\mr$.
\end{definition}
Note that the width of a canonical angle of determinant $d$ is 1 if $d$ is odd and $2$ if $d$ is even.
Therefore, its height is $1$ if and only if $d\ge 2$. We get the following statement (that has implicitly already appeared in the proof of Proposition \ref{prop-determ}).
\begin{proposition}\label{dual-can}
A canonical angle of determinant $d\le 2$ is isomorphic to an $A_{d-1}$-angle if and only if $d\le 2$.  
\end{proposition}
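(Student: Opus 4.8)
The plan is to use the \emph{height} as the isomorphism invariant that distinguishes the canonical angle from the $A_{d-1}$-angle, and then to dispatch the two small cases $d=1,2$ by appealing to Proposition \ref{prop-determ}. First I would record that the height is invariant under tropical isomorphisms: it is the quotient of the determinant (the area of $v\wedge w$) by the width (the tropical length of $v-w$), and each of these is manifestly preserved by the action of $\operatorname{GL}_2(\Z)$ on $N$. By definition the $A_{d-1}$-angle has height $1$.

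Next I would compute the height of the canonical angle of determinant $d$ directly from the width already determined above. Since that width equals $1$ for odd $d$ and $2$ for even $d$, the height equals $d$ when $d$ is odd and $d/2$ when $d$ is even. In both cases the height is $1$ exactly when $d\le 2$. This gives the ``only if'' direction at once: if $d\ge 3$ then the canonical angle has height $d\ge 3$ or $d/2\ge 2$, in either case strictly larger than the height $1$ of the $A_{d-1}$-angle, so the two angles are non-isomorphic.

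For the ``if'' direction I would simply invoke Proposition \ref{prop-determ}: when $d=1$ all angles of determinant $1$ are isomorphic, and when $d=2$ all angles of determinant $2$ are isomorphic; as the canonical angle and the $A_{d-1}$-angle both have determinant $d$, they are isomorphic in each of these cases. The only real obstacle is confirming that the height genuinely obstructs isomorphism, i.e.\ that it is an invariant---but this is immediate from the invariance of the determinant and the width, so the argument is otherwise routine bookkeeping with the width formula.
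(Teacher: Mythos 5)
Your proof is correct and takes essentially the same route as the paper, which derives the proposition from the same two ingredients: the width of the canonical angle of determinant $d$ is $\gcd(2,d)$, hence its height is $1$ exactly when $d\le 2$, and Proposition \ref{prop-determ} settles the cases $d=1,2$. Your only addition is the explicit check that height is an isomorphism invariant, a point the paper leaves tacit.
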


Any tropical angle $\Sigma\subset\nr$ is the convex hull of its two boundary rays $R_0$ and $R_+$. We may define the {\em complementary tropical angle} $\Sigma'\subset\nr$  as the angle spanned by $R_0$ and $-R_+$.
\begin{proposition}
The complementary angle $\Sigma'\subset\nr$ is isomorphic to the dual tropical angle $\Sigma^*\subset\mr$.
\end{proposition}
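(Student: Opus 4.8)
The plan is to exhibit one explicit tropical isomorphism between $\nr$ and $\mr$ that simultaneously converts complementation into dualization. The natural candidate is the \emph{area-form isomorphism} $J\colon\nr\to\mr$ defined by $J(u)=\det(u,\,\cdot\,)$, so that $J(u)(p)$ is the signed area of the parallelogram spanned by $u$ and $p$. Because the determinant is unimodular on $N$, the map $J$ carries $N$ isomorphically onto $M=\operatorname{Hom}(N,\Z)$; hence $J$ is a tropical isomorphism, and it suffices to prove the identity $J(\Sigma')=\Sigma^*$.

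Write $\Sigma=\operatorname{conv}(R_0,R_+)$ with $R_0=\R_{\ge 0}v$ and $R_+=\R_{\ge 0}w$, and fix the orientation so that $\det(v,w)>0$. First I would record the defining property that $J(u)(p)=0$ if and only if $p\parallel u$. Then I would locate the two boundary rays of $\Sigma^*=\{q:q(v)\ge 0,\ q(w)\ge 0\}$, which is the intersection of two half-planes and therefore a pointed $2$-dimensional cone. The functionals vanishing on $w$ form the line $\R\,J(w)$, and among them the one nonnegative on $v$ is $J(-w)$, since $J(-w)(v)=\det(v,w)>0$. Symmetrically, the functionals vanishing on $v$ form the line $\R\,J(v)$, and $J(v)(w)=\det(v,w)>0$ singles out $J(v)$. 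Thus the boundary rays of $\Sigma^*$ are $\R_{\ge 0}J(v)$ and $\R_{\ge 0}J(-w)$.

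It then follows that
$\Sigma^*=\operatorname{conv}\bigl(\R_{\ge 0}J(v),\,\R_{\ge 0}J(-w)\bigr)=J\bigl(\operatorname{conv}(R_0,-R_+)\bigr)=J(\Sigma')$,
which is exactly the desired conclusion. The only genuine points to watch are that $J$ is lattice-preserving — the crux, following from unimodularity of the determinant form on the standard lattice and its dual — and the sign bookkeeping in the two ray computations. I would also remark that the labeling of $R_0$ versus $R_+$ and the sign of $\det(v,w)$ are immaterial: reversing the orientation replaces $\operatorname{conv}(v,-w)$ by $\operatorname{conv}(-v,w)$, and these two cones differ by $-\operatorname{id}\in\operatorname{GL}(N)$, hence are tropically isomorphic. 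Finally, the argument uses nothing about rationality of the boundary slopes, so it applies to arbitrary tropical angles.
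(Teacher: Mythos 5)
Your proof is correct and is essentially the paper's own argument: the area-form map $J(u)=\det(u,\cdot)$ is exactly the composition of the paper's Euclidean identification of $\mr$ with $\nr$ (declaring a basis of $N$ orthonormal) with the rotation by $90^\circ$, so both proofs exhibit the same unimodular identification carrying $\Sigma^*$ to $\Sigma'$. Your write-up merely packages it invariantly and makes explicit the two points the paper leaves implicit — that the identification is lattice-preserving and the computation of the boundary rays of $\Sigma^*$.
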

In particular, the isomorphism class of the complementary angle does not depend on the chosen order of the boundary rays. 
\begin{proof}
Let us choose a Euclidean metric on $\nr$ by declaring a basis of $N$ to be orthonormal.
This choice gives an identification between $\mr$ and $\nr$ where the dual angle is mapped to the angle formed by the orthogonal rays to $R_0$ and $R_+$ in the direction of the half-planes containing the angle $\Sigma$.
Rotating the result by 90 degrees identifies the dual angle and the complementary angle.
\end{proof}

Suppose now that one boundary ray $R_0\subset\dd\Sigma$ is of rational slope, and the other bounday ray $R_+\subset\dd\Sigma$ is arbitrary.
Then we may choose a tropical isomorphism $\nr\approx\R^2$ so that $R_0$ is mapped to the upper vertical half-axis while $R_+$ is contained in the right half-plane. Then the ordinate $s_+$ of the intersection of $R_+$ with the line $\{x=1\}\subset\R^2$ can be used to reconstruct $\Sigma$. The real number $s_+$ is called the {\em dual slope}, or the {\em tropical cotangent}\footnote{The tropical tangent is inverse of this quantity.} of $\Sigma$.
It is easy to see that the tropical cotangent is well defined up to addition of an integer number (which makes it easier to deal with than the tropical tangent).   

\section{Double tropical angle}
Suppose that $R_-\subset\nr$ is a ray in the left half-plane intersecting the line $\{x=-1\}\subset\R^2$ at a point whose ordinate is $s_-$.
\begin{definition}
{\em The double tropical angle} is $$\tr(R_-,R_0,R_+)=-(s_++s_-)\in\R.$$
\end{definition}

The double angle is well-defined: clearly it does not depend on the identification of $\nr$ with $\R^2$ as long as the ray $R_0$ maps to the upper vertical half-line. Indeed, different identifications vary by the map $(x,y)\mapsto (x,y+nx)$. This change decreases $s_+$ by $n$, but increases $s_-$ by the same amount. Note that $R_-$ and $R_+$ are antiparallel if and only if $\tr(R_-,R_0,R_+)=0$. In case if the convex hull of $R_+$ and $R_-$ is a strictly convex cone $\Sigma$, the ray $R_0$ is contained in $\Sigma$ if and only if $s_++s_->0$. In the same time, unlike its sign, the numerical value of $\tr(R_-,R_0,R_+)$ depends not only on $\Sigma$, but also on the position of the ray $R_0$ there.

\begin{remark}\label{angle-sintersect}
The double tropical angle is useful in toric geometry. Let the toric variety $X_ {\mathcal F}$ be defined by a fan ${\mathcal F}\subset\mr$ (composed of rays of rational slope), and $R_-,R_0,R_+$ are three rays in ${\mathcal F}$ that are consequent with respect to the cyclic order, then $\tr(R_-,R_0,R_+)\in\Q$ \footnote{The toric surface $X_ {\mathcal F}$ might be singular, but all of its singular points are quotient singularities by finite group. Therefore, we have the Poincar\'e duality over $\Q$, and the intersection numbers of divisors are well-defined as rational numbers.}
 is the self-intersection of the divisor corresponding to $R_0$ in $X_ {\mathcal F}$. 
\end{remark}

Suppose now that the angle $\Sigma(R_0,R_+)$ generated by $R_0,R_+$, and the angle $\Sigma(R_-,R_0)$ generated by $R_-,R_0$ are both right angles.
Then there exists a unique automorphism $\Phi:\nr\to\nr$ such that $\Phi(R_-)=R_0$ and $\Phi(R_0)=R_+$.
This automorphism is orientation-preserving, and thus defines a conjugacy class in $\operatorname{SL}_2(\Z)$.

Consider the universal covering 
\begin{equation}\label{eq-coverSL}
\widetilde{\operatorname{SL}}_2(\R)\to\operatorname{SL}_2(\R).
\end{equation}
Its base $\operatorname{SL}_2(\R)$ contains $\operatorname{SL}_2(\Z)$ as a subgroup.
The inverse image of $\operatorname{SL}_2(\Z)$ under the covering \eqref{eq-coverSL} is the group $\widetilde{\operatorname{SL}}_2(\Z)$ which is a central extension of $\operatorname{SL}_2(\Z)$.
Recall that the abelianization \begin{equation}\label{eq-twelve}
\operatorname{SL}_2(\Z)/[\operatorname{SL}_2(\Z),\operatorname{SL}_2(\Z)]\approx\Z_{12}
\end{equation}
is a cyclic group of order 12, while the abelianization  \begin{equation}\label{eq-widetilde}
\widetilde{\operatorname{SL}}_2(\Z)/[\widetilde{\operatorname{SL}}_2(\Z),\widetilde{\operatorname{SL}}_2(\Z)]\approx\Z
\end{equation}
is an infinite cyclic group. 

Since the union $\Sigma(R_0,R_+)\cup \Sigma(R_-,R_0)$ is disjoint from the relative interior of the ray $-R_0$,
there is a preferred way for lifting the automorphism $\Phi\in\operatorname{SL}_2(\Z)$ to an automorphism $\widetilde\Phi\in\widetilde{\operatorname{SL}}_2(\Z)\subset\widetilde{\operatorname{SL}}_2(\R)$. Taking the abelianization \eqref{eq-widetilde} we get an integer number $k(\widetilde\Phi)\in\Z$.
\begin{proposition}
We have 
$$k(\widetilde\Phi)=3+\tr(R_-,R_0,R_+).$$
\end{proposition}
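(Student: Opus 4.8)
The plan is to reduce the statement to an explicit computation with the abelianization homomorphism $k\colon\widetilde{\operatorname{SL}}_2(\Z)\to\Z$. First I would normalize coordinates so that $R_0$ is the positive $y$-axis. The right-angle (determinant $1$) conditions then force the primitive generators of $R_+$ and $R_-$ to be $(1,s_+)$ and $(-1,s_-)$ with $s_\pm\in\Z$, so that $\tr(R_-,R_0,R_+)=-(s_++s_-)$. Solving $\Phi(R_-)=R_0$ and $\Phi(R_0)=R_+$ (primitive vector to primitive vector, with the rays cooriented) pins down
\[
\Phi=M(s_-,s_+)=\begin{pmatrix} s_- & 1\\ s_-s_+-1 & s_+\end{pmatrix},\qquad \operatorname{Tr}M=s_++s_-.
\]
Thus the proposition becomes the purely group-theoretic identity $k(\widetilde\Phi)=3-\operatorname{Tr}M$.

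Next I would fix the normalization of $k$ and record its geometric meaning. The central generator $z$ (a full turn, generating the kernel $\pi_1(\operatorname{SL}_2(\R))$ of the covering \eqref{eq-coverSL}) satisfies $k(z)=12$ by \eqref{eq-twelve} and \eqref{eq-widetilde}, the orientation being chosen so that a clockwise quarter-turn has $k=3$. The preferred lift $\widetilde\Phi$ is read off from the continuous sweep of directions along the union $\Sigma(R_0,R_+)\cup\Sigma(R_-,R_0)$: since the two right angles lie in opposite open half-planes and meet only along $R_0$, this sweep is an arc of length $<2\pi$ avoiding the relative interior of $-R_0$, which is exactly the data singling out $\widetilde\Phi$. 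In the base case $s_+=s_-=0$ we have $M=\begin{pmatrix}0&1\\-1&0\end{pmatrix}$, the union is the closed upper half-plane, and the sweep is a clockwise quarter-turn, so $k(\widetilde\Phi)=3=3-\operatorname{Tr}M$; this simultaneously pins the orientation convention.

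Finally I would run a recursion in $(s_-,s_+)$. Incrementing $s_+$ replaces $R_+$ by $UR_+$ for the shear $U=\begin{pmatrix}1&0\\1&1\end{pmatrix}$ fixing $R_0$, whence $M\mapsto UM$; incrementing $s_-$ instead gives $M\mapsto MU$. In either case $\operatorname{Tr}M$ grows by $1$, and since $U$ is conjugate to $T^{-1}$ (with $T=\begin{pmatrix}1&1\\0&1\end{pmatrix}$) the homomorphism property gives $k(\widetilde\Phi)\mapsto k(\widetilde\Phi)+k(\widetilde U)=k(\widetilde\Phi)-1$, provided $k(\widetilde T)=1$ and the preferred lifts compose. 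Together with the base case this yields $k(\widetilde\Phi)=3-\operatorname{Tr}M$ by induction on $|s_+|+|s_-|$. The main obstacle is the evaluation $k(\widetilde T)=1$ on a parabolic: unlike the elliptic base case, a parabolic has vanishing rotation number, so $k(\widetilde T)$ is invisible to the sweep and must be pinned through the relations $\widetilde S^{\,4}=(\widetilde{ST})^{6}=z^{-1}$ (forcing the elliptic values $k(\widetilde S)=-3$ and $k(\widetilde{ST})=-2$) together with a check that the preferred lifts of $S$ and $T$ multiply to that of $ST$ without an extra factor of $z$; this cocycle bookkeeping, and the compatibility of preferred lifts under the shear step, are the only delicate points. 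Alternatively one can bypass the recursion entirely by quoting the explicit Rademacher description of the abelianization homomorphism and evaluating it on $M(s_-,s_+)$.
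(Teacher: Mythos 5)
Your proposal is correct and follows essentially the same route as the paper: after normalizing coordinates, the base case is the rotation by $90$ degrees with $k=3$, and the general case is obtained by correcting with lattice shears conjugate to powers of $T$, each contributing $\mp 1$ in the abelianization — your induction on $|s_+|+|s_-|$ is exactly the paper's single right multiplication by $\begin{pmatrix}1 & 0\\ -\tr(R_-,R_0,R_+) & 1\end{pmatrix}$ unwound one shear at a time. The only packaging differences are that the paper pins the base case via the factorization of the rotation into three conjugates of $T$ (rather than via $\widetilde S^{4}=z^{\pm1}$ and the elliptic relations or Rademacher's function), and both proofs leave the same preferred-lift compatibility bookkeeping, which you explicitly flag as the delicate point, implicit.
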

In particular, in this case the double tropical angle is integer.
\begin{proof}
Suppose that $R_+=-R_-$. Then $\Phi$ is the rotation by 90 degrees, and thus $k(\widetilde\Phi)=3$, since we have
$$
\begin{pmatrix}
0 & 1\\
-1 & 0
\end{pmatrix}=
\begin{pmatrix}
1 & 1\\
0 & 1
\end{pmatrix}
\begin{pmatrix}
1 & 0\\
-1 & 1
\end{pmatrix}
\begin{pmatrix}
1 & 1\\
0 & 1
\end{pmatrix},
$$
and all three matrices from the right-hand side represent the same conjugacy class mapped to 1 after the abelianization of $\widetilde{\operatorname{SL}}_2(\Z)$.
In the general case we multiply the right-hand side by the 
matrix 
$$
\begin{pmatrix}
1 & 0
\\
-\tr(R_-,R_0,R_+) & 1
\end{pmatrix}
$$
on the right.
\end{proof}
\begin{corollary}\label{first12}
Suppose that ${\mathcal F}$ is a complete fan in $\nr$ given by the union of rays $R_1,\dots,R_n$ (enumerated clockwise) such that all consecutive angles $\Sigma(R_{j-1},R_j)$, $j=1,\dots,n$, and $\Sigma(R_{n},R_1)$ are tropical right angles.
Then
\begin{equation}\label{complete-angle}
3n+\sum\limits_{j=1}^n \tr(R_{j-1},R_j,R_{j+1})=12.
\end{equation}
\end{corollary}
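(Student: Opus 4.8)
The plan is to apply the preceding Proposition to each consecutive triple $R_{j-1},R_j,R_{j+1}$, multiply the resulting lifts, and read both sides of \eqref{complete-angle} off the homomorphism $k\colon\widetilde{\operatorname{SL}}_2(\Z)\to\Z$. Indices are taken cyclically, $R_0=R_n$ and $R_{n+1}=R_1$. For each $j$ the angles $\Sigma(R_{j-1},R_j)$ and $\Sigma(R_j,R_{j+1})$ are right angles, so the Proposition supplies the unique orientation-preserving $\Phi_j\in\operatorname{SL}_2(\Z)$ with $\Phi_j(R_{j-1})=R_j$ and $\Phi_j(R_j)=R_{j+1}$, together with its preferred lift $\widetilde\Phi_j$ satisfying $k(\widetilde\Phi_j)=3+\tr(R_{j-1},R_j,R_{j+1})$. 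Since $k$ is a homomorphism into $\Z$, I would write
$$k\big(\widetilde\Phi_n\cdots\widetilde\Phi_1\big)=\sum_{j=1}^n k(\widetilde\Phi_j)=3n+\sum_{j=1}^n\tr(R_{j-1},R_j,R_{j+1}),$$
reducing the claim to the assertion that the left-hand side equals $12$.

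Next I would compute the product of the $\Phi_j$ downstairs. Writing $v_j$ for the primitive generator of $R_j$, we have $\Phi_j(v_{j-1})=v_j$ and $\Phi_j(v_j)=v_{j+1}$; tracking these relations along the composition shows that $\Phi_n\cdots\Phi_1$ fixes both $v_n$ and $v_1$, hence fixes the basis $(v_n,v_1)$ of $N$ and equals the identity of $\operatorname{SL}_2(\Z)$. Therefore $\widetilde\Phi_n\cdots\widetilde\Phi_1$ lies in the kernel of \eqref{eq-coverSL}, i.e. it is a power $z^m$ of the central generator $z$ corresponding to one full turn in $\operatorname{SL}_2(\R)$.

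The hard part will be to pin down $m=1$ and $k(z)=12$. For the winding number $m$, I would use that the preferred lift $\widetilde\Phi_j$ advances the circle of directions forward across the strictly convex cone $\Sigma(R_{j-1},R_{j+1})$ (the one disjoint from $-R_j$ that selects the lift); composing the $\widetilde\Phi_j$ then transports the direction of $R_1$ successively through $R_2,R_3,\dots$ and back to $R_1$, the total advance being the sum of the angular widths of the cones $\Sigma(R_j,R_{j+1})$. Completeness of $\mathcal F$ forces these widths to sum to exactly one full turn, giving $m=1$. For $k(z)$, the covering induces a surjection $\Z\approx\widetilde{\operatorname{SL}}_2(\Z)^{\mathrm{ab}}\to\operatorname{SL}_2(\Z)^{\mathrm{ab}}\approx\Z_{12}$ by \eqref{eq-twelve}--\eqref{eq-widetilde}; as $z$ maps to the identity downstairs, its class generates the kernel $12\Z$, so $k(z)=\pm12$, and the sign is fixed to $+12$ by the orientation convention (equivalently by checking the fan of $\mathbb P^2$, where $n=3$ and each summand equals $1$). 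Combining, $k(\widetilde\Phi_n\cdots\widetilde\Phi_1)=k(z)=12$, which is exactly \eqref{complete-angle}.

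I expect the only genuinely delicate points to be the winding-number identification $m=1$ and the determination of the sign in $k(z)=\pm12$; the rest is bookkeeping with the homomorphism $k$ and the relations $\Phi_j(v_{j-1})=v_j$, $\Phi_j(v_j)=v_{j+1}$.
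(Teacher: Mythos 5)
Your proof is correct and takes essentially the same route as the paper's: the paper's (much terser) argument likewise multiplies the preferred lifts $\widetilde\Phi_j$ in cyclic order, identifies the product with the central element of $\widetilde{\operatorname{SL}}_2(\Z)$ corresponding to a complete turn, and reads off $12$ via the abelianizations \eqref{eq-twelve}--\eqref{eq-widetilde}. The details you supply --- that the product is the identity downstairs because it fixes the basis $(v_n,v_1)$, that the winding number is $1$ by completeness of the fan, and that $k(z)=\pm 12$ by right-exactness of abelianization with the sign pinned down by an example --- are precisely the steps the paper leaves implicit.
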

Here we identify $R_{0}=R_n$ and $R_{n+1}=R_1$ for convenience in notation.
\begin{proof}
Taking the product of the elements $\widetilde\Phi$ for all double tropical angles in ${\mathcal F}$ taken in the cyclic order we get the central element of $\widetilde{\operatorname{SL}}_2(\Z)$ corresponding to a complete turn in $\nr$, and thus to 12 by \eqref{eq-twelve}.
\end{proof}

\section{A Noether-type formula in the tropical plane}
Formula \eqref{complete-angle} can be considered as the tropical trigonometric formula for the complete double angle in ${\mathcal F}$. There are many other guises how the same formula enters combinatorics and toric surfaces as the instance of the Noether formula, cf. e.g. \cite{PRV}.
Below we consider another version of this appearance.

Let $R_-,R_+\subset\nr$ be two non-parallel rays from the origin generated by the primitive vectors $\nu_\pm\in N$, and the tropical angle $\Sigma$ be their convex hull.
\begin{proposition}\label{perp-descr}
There are at most two lines $L$ passing through the origin and tropically perpendicular both to $R_-$ and $R_+$.
If $L$ passes through the interior of $\Sigma$ then $\Sigma$ is a canonical angle. 
If $L$ is disjoint from the interior of $\Sigma$ then $\Sigma$ is an $A_{d-1}$-angle for some $d\in\N$.
\end{proposition}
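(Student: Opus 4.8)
The plan is to unwind tropical perpendicularity into a determinant condition and then reduce to a normal form. Writing $v$ for the primitive direction of a line $L$ through the origin and $\nu$ for the primitive generator of a ray $R$, the line $L$ is tropically perpendicular to $R$ exactly when $v$ and $\nu$ span a tropical right angle, i.e. $|v\wedge\nu|=1$. All the notions in the statement — perpendicularity, the interior of $\Sigma$, and the isomorphism types of the canonical and $A_{d-1}$ angles — are invariant under $\operatorname{Iso}(\et)$, so I would first apply an element of $\operatorname{GL}_2(\Z)$ to normalize $\nu_-=(1,0)$ and $\nu_+=(a,d)$, where $d=\det\Sigma$ and $0\le a<d$ with $\gcd(a,d)=1$.

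For the bound of two lines, I would note that the map $v\mapsto(v\wedge\nu_-,\,v\wedge\nu_+)$ is injective since $\nu_\pm$ are linearly independent. A perpendicular direction $v$ must be sent to one of the four sign vectors $(\pm1,\pm1)$, and since $v\mapsto-v$ reverses both entries while preserving the line $\R v$, these four possibilities collapse to at most two lines. Concretely, with $v=(x,y)$ one has $v\wedge\nu_-=-y$ and $v\wedge\nu_+=xd-ay$, so perpendicularity reads $|y|=1$ and $|xd-ay|=1$; this system is solvable over $\Z$ precisely when $a\equiv\pm1\pmod d$, and each admissible sign of $v\wedge\nu_+$ contributes a single line.

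The geometric heart of the argument is the dichotomy between the two sign regimes. The line $\R v$ meets the interior of $\Sigma$ iff $v\wedge\nu_-$ and $v\wedge\nu_+$ have opposite signs, and is disjoint from it iff they agree: writing an interior vector as $\alpha\nu_-+\beta\nu_+$ with $\alpha,\beta>0$ gives $v\wedge\nu_-=\beta(\nu_+\wedge\nu_-)$ and $v\wedge\nu_+=\alpha(\nu_-\wedge\nu_+)$, whose product is negative, and replacing $v$ by $-v$ preserves this sign of the product. Substituting into $|y|=1$, $|xd-ay|=1$, the opposite-sign (interior) regime forces $xd=a+1$, hence $a\equiv-1\pmod d$ and $a=d-1$, while the same-sign (disjoint) regime forces $xd=a-1$, hence $a\equiv1\pmod d$ and $a=1$.

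It then remains to recognize the two normal forms. For $a=1$ the generators $(1,0)$ and $(1,d)$ have width $d$ and determinant $d$, hence height $1$, so by Proposition \ref{prop-determ} this is the $A_{d-1}$-angle — settling the disjoint case. For $a=d-1$ I would compute the dual angle directly: its boundary covectors are $(0,1)$, vanishing on $\nu_-$, and $(d,1-d)$, vanishing on $\nu_+$, and these span an angle of width $d$, determinant $d$, and height $1$, i.e. an $A_{d-1}$-angle in $\mr$; thus $\Sigma$ is dual to an $A_{d-1}$-angle, which is by definition the canonical angle, settling the interior case. The step I expect to require the most care is not any single computation but the consistent bookkeeping of signs and orientation throughout — matching ``opposite sign $\Leftrightarrow$ interior $\Leftrightarrow$ canonical'' against ``equal sign $\Leftrightarrow$ disjoint $\Leftrightarrow$ $A_{d-1}$'' — and checking the boundary cases $d\le 2$, where both congruences $a\equiv\pm1$ hold at once, so that $\Sigma$ is simultaneously canonical and $A_{d-1}$, in agreement with Proposition \ref{dual-can}.
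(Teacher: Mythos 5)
Your proof is correct, and its first half (injectivity of $v\mapsto(v\wedge\nu_-,\,v\wedge\nu_+)$, four sign vectors collapsing under $v\mapsto -v$ to at most two lines) is exactly the paper's counting argument. Where you genuinely diverge is in the normalization and in how the two cases are closed. The paper normalizes the \emph{line}: choosing a basis of $N$ in which $L$ is vertical, perpendicularity forces the first coordinates of $\nu_\pm$ to be $\pm1$ when $L$ crosses the interior, which is already the normal form of a canonical angle; the disjoint case is then not computed at all, but reduced to the interior case by passing to the complementary angle and invoking the earlier proposition that the complementary angle is isomorphic to the dual one. You instead normalize the \emph{cone} to $\nu_-=(1,0)$, $\nu_+=(a,d)$, translate perpendicularity into the congruences $a\equiv\pm1\pmod d$, and identify both normal forms by hand --- the disjoint case ($a=1$) directly from width and height, the interior case ($a=d-1$) via an explicit computation of the dual cone together with the standard involutivity $(\Sigma^*)^*=\Sigma$. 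The trade-off: the paper's route is shorter and reuses its complementary-angle lemma, while yours is self-contained and yields extra information as a by-product, namely the exact existence criterion $a\equiv\pm1\pmod d$ for a common perpendicular; this makes the subsequent corollary on the number of perpendiculars (one when $d>2$, two when $d\le 2$) and the consistency with Proposition \ref{dual-can} at $d\le 2$ completely transparent, whereas the paper leaves that count implicit.
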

Conversely, for a canonical angle $\Sigma$ the common perpendicular line $L$ is parallel to $\nu_++\nu_-$ while
 for an $A_{d-1}$-angle $\Sigma$ the common perpendicular line $L$ is parallel to $\nu_+-\nu_-$.
\begin{proof}
A primitive vector $\lambda\in N$ forms a tropical right angle with $\nu_\pm$ if and only if the signed area $\lambda\wedge\nu_\pm$ is 1 or $-1$.
This gives us four non-degenerate linear systems of equations on $\lambda$ which may produce up to two distinct common perpendicular lines (as $\lambda$ and $-\lambda$ generate the same line).

Suppose that $L$ passes through the interior of $\Sigma$. Choose a basis in $N$ so that $L$ is a vertical line.
Then the first coordinate of $\nu_\pm$ must be $\pm1$ and thus $\Sigma$ is canonical.
Taking the complement angle if $L$ does not pass through the interior of $\Sigma$, we get that $\Sigma$ is an $A_{d-1}$-angle .
\end{proof}
With the help of Proposition \ref{dual-can} we get the following corollary.
\begin{corollary}
If we have $d>2$ for the determinant $d$ of the angle $\Sigma$ then the common perpendicular line $L$ through the apex of $\Sigma$ is unique.
If $d=1,2$ then there are two common perpendiculars.
\end{corollary}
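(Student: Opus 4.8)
The plan is to convert the counting of common perpendiculars into the question of which of the two ``types'' isolated in Proposition~\ref{perp-descr} the angle $\Sigma$ realizes, and then to use Proposition~\ref{dual-can} to decide when both types occur at once. By the converse already recorded after Proposition~\ref{perp-descr}, a line through the apex perpendicular to both $R_-$ and $R_+$ must be parallel to $\nu_++\nu_-$ (and then it passes through the interior of $\Sigma$, forcing $\Sigma$ to be canonical) or else parallel to $\nu_+-\nu_-$ (and then it is disjoint from the interior, forcing $\Sigma$ to be an $A_{d-1}$-angle). Since $\nu_+,\nu_-$ are linearly independent, the directions $\nu_++\nu_-$ and $\nu_+-\nu_-$ are distinct, so each type supplies at most one perpendicular and the two lines genuinely differ when both are present.

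Consequently $\Sigma$ admits two common perpendiculars exactly when it is simultaneously a canonical angle and an $A_{d-1}$-angle. This is precisely where Proposition~\ref{dual-can} applies: a canonical angle of determinant $d$ is isomorphic to an $A_{d-1}$-angle if and only if $d\le 2$. For $d=1$ every angle is a tropical right angle, and for $d=2$ all angles of that determinant are isomorphic, so in each case $\Sigma$ is at once canonical and $A_{d-1}$; both perpendicular directions are then realized and we obtain exactly two perpendiculars. For $d>2$ the two classes are disjoint, hence $\Sigma$ can be of at most one type, and the common perpendicular, when it exists, is unique.

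The step I expect to demand the most care is distinguishing \emph{direction} from \emph{existence}: the reasoning above shows only that any perpendicular must point along $\nu_+\pm\nu_-$, not that the defining equations $\lambda\wedge\nu_\pm=\pm 1$ admit an integer primitive solution. I would settle this by solving the two linear systems explicitly: the $\nu_+-\nu_-$ system has an integral solution exactly when the width of $\Sigma$ equals the determinant $d$, i.e.\ when the height is $1$ and $\Sigma$ is an $A_{d-1}$-angle, while the $\nu_++\nu_-$ system is solvable in $N$ exactly when the complementary angle has height $1$, i.e.\ when $\Sigma$ is canonical. These two solvability criteria match the type classification above, and together they confirm that the number of common perpendiculars is indeed two for $d\le 2$ and at most one for $d>2$.
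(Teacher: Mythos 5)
Your proof is correct and follows essentially the same route as the paper: the corollary is obtained by combining Proposition~\ref{perp-descr} (each common perpendicular is of one of two types, parallel to $\nu_+\pm\nu_-$) with Proposition~\ref{dual-can} (the canonical and $A_{d-1}$ classes coincide exactly when $d\le 2$), together with Proposition~\ref{prop-determ} to see that every angle of determinant $d\le2$ is of both types. Your final paragraph, solving the systems $\lambda\wedge\nu_\pm=\pm1$ to pin down existence via the height of $\Sigma$ and of its complementary angle, is a careful re-derivation of the converse statement the paper records after Proposition~\ref{perp-descr}, not a different argument.
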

\begin{definition}
If the common perpendicular $L$ passes through the interior of $\Sigma$ then we call $L$ {\em the bissectrice} of the tropical angle $\Sigma$.
\end{definition}
By Proposition \ref{perp-descr}, a tropical angle $\Sigma$ admitting the bissectrice must be canonical. Later, we see that the tropical caustic multisects any tropical angle into right angles, cf. Corollaries \ref{cone-subdiv} and \ref{An-single}.

Let $B=\dd P$ be the boundary of a (not necessarily convex) polygon $P\subset\nr$ with rational slope sides and canonical angles. For a non-convex vertex $v\in P$ this means that the tropical cone that locally coincides with the complement of $P$ near $v$ is canonical.
By construction, every edge $E$ of $P$ is a common perpendicular to the bissectrices $L_\pm$ at the adjacent vertices. 
By Proposition \ref{perp-descr}, these bissectrices form an $A_{n-1}$-angle for some $n\in\N$ or are parallel. 
If $L_+$ and $L_-$ are parallel, we set $d_E=0$. Otherwise, we consider the half-plane $H$ whose boundary contains the edge $E$ and whose interior contains a small neighbourhood of $E$ in $P$.
We set $d_E=n$ if $L_+\cap L_-\subset H$, and we set $d_E=-n$ if not. For a vertex $v$ of determinant $n$ we set $d_v=n$ if $v$ is convex, and $d_v=-n$ if $v$ is not convex.

\begin{theorem}\label{12n}
For a (not necessarily convex) polygon $P\subset\nr$ with canonical angles we have
$$\sum\limits_v d_v+\sum\limits_E d_E=12,$$
where the first sum is taken over all vertices $v$ of $P$ and the second sum is taken over all of its edges $E$.
\end{theorem}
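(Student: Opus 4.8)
The plan is to read the left-hand side as the abelianized monodromy of a suitable frame transported once around $\partial P$, and to identify the resulting element of $\widetilde{\operatorname{SL}}_2(\Z)$ with the full turn, whose image under \eqref{eq-widetilde} is $12$ by \eqref{eq-twelve}. This is the same mechanism that produces Corollary \ref{first12}. The frame I would transport is the ordered pair consisting of the direction of the current edge and the direction of the bissectrice at the current vertex. By Proposition \ref{perp-descr} the bissectrice of a canonical angle is a common perpendicular to both boundary rays, so at every moment this pair consists of two primitive vectors forming a tropical right angle; hence it is a basis of $N$, and after fixing an orientation it is an element of $\operatorname{SL}_2(\Z)$.

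Next I would compute the two kinds of elementary updates of this frame. Passing through a vertex $v$, the bissectrice direction stays fixed (it is perpendicular to both incident edges, since it bisects the canonical angle into two right angles), while the edge direction jumps from the incoming to the outgoing edge; the two edge directions are perpendicular to the same bissectrice, so they differ by a multiple of it, and the update is a shear whose abelianized lift in $\widetilde{\operatorname{SL}}_2(\Z)$ equals $d_v$. Along an edge $E$ the roles are exchanged: the edge direction is constant while the bissectrice rotates from $L_-$ to $L_+$, both perpendicular to $E$; by hypothesis these two lines either are parallel, giving a trivial update and $d_E=0$, or bound an $A_{n-1}$-angle, in which case the update is a shear of magnitude $n$ whose abelianized lift equals $d_E$. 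In both nontrivial cases the magnitude of the shear is the determinant of the relevant angle, and its sign records, respectively, whether $v$ is convex and whether $L_+\cap L_-$ lies in $H$, which are exactly the signs fixed in the definitions of $d_v$ and $d_E$.

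Multiplying these elementary lifts in the order in which they are met along $\partial P$ produces a preferred lift $\widetilde\Psi\in\widetilde{\operatorname{SL}}_2(\Z)$ of the identity, and by construction its image under \eqref{eq-widetilde} is $\sum_v d_v+\sum_E d_E$. It then remains to show that $\widetilde\Psi$ is the central element corresponding to one full positive turn. For this I would invoke the theorem of turning tangents: since $\partial P$ is an embedded closed curve, the edge direction has rotation number $1$, and the perpendicular bissectrice direction turns with it, so the frame winds exactly once; hence $\widetilde\Psi$ is the full turn, whose abelianization is $12$ by \eqref{eq-twelve} and \eqref{eq-widetilde}.

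The routine part is the linear algebra identifying each update as a shear. The delicate part is twofold. First, one must check that the signs of the two shears match the combinatorial signs in the definitions of $d_v$ and $d_E$, including the concave vertices (where it is the complementary cone that is canonical) and the edges for which $L_+\cap L_-$ lies outside $H$; here the orientation of the traversal and the bisecting property of the bissectrice must be tracked consistently. Second, and this is the main obstacle, one must ensure that the accumulated lift is exactly one full turn rather than some other multiple: this is where the global topology of the embedded curve $\partial P$, and not merely the local angle data, enters, and it is precisely what pins the total to $12$.
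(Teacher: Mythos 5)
Your proposal is correct and is essentially the paper's own proof: the paper transports exactly the same frame, namely the pair $(\alpha_v,\beta_E)$ of bissectrice and edge directions, around $\partial P$, reads off $d_v$ and $d_E$ as the abelianized shears in $\widetilde{\operatorname{SL}}_2(\Z)$ arising at vertex passages and edge passages respectively, and closes the argument via the full-turn-equals-12 mechanism of Corollary \ref{first12}. The global input you flag as the main obstacle---that the embedded curve $\partial P$ winds exactly once---is precisely what the paper invokes (tersely, as ``after one full turn''), and it is made explicit in the paper's subsequent generalization to immersed curves, where the right-hand side becomes $12\operatorname{rot}(B)$.
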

\begin{proof}
To a vertex $v$ we assign the primitive vector $\alpha_v\in N$ parallel to the bissectrice of $v$ in the outward direction.
To an edge $E$ we assign the primitive vector $\beta_E\in N$ parallel to this edge in the counterclockwise direction (in $P$).
If $v$ is adjacent to $E$ then $(\alpha_v,\beta_E)$ is a basis of $N$ (since $v$ is canonical).
In the abelianization of $\widetilde{\operatorname{SL}}_2(\Z)$, the passage between $(\alpha_v,\beta_{E_-})$ and $(\alpha_v,\beta_{E_+})$
amounts to $d_v$ if $E_\pm$ are two different edges adjacent to $v$.
Similarly, if $v_\pm$ are two vertices adjacent to the same edge $E$ then the passage between $(\alpha_{v_-},\beta_{E})$ and $(\alpha_{v_+},\beta_{E})$ amounts to $d_E$. As in the proof of Corollary \ref{first12}, after one full turn, these numbers add up to 12. 
\end{proof}

This proof generalises in a straightforward way to the case of an immersed canonical closed broken line $B\subset\nr$.
This means that $B$ is composed from the edges parallel to the vectors in $N$, and all the vertices of $B$ are canonical.
We choose a co-orientation of $B$ and then an orientation of $N$ (used in the $\widetilde{\operatorname{SL}}_2(\Z)/[\widetilde{\operatorname{SL}}_2(\Z),\widetilde{\operatorname{SL}}_2(\Z)]$-calculus) gives us the orientation of $B$.
The co-orientation of $B$ allows one to define the sign of the integer numbers $d_v$ and $d_E$ for the vertices $v$ and edges $E$ of $B$.
In this set-up the previous proof gives us the following statement.
\begin{theorem}
For an immersed canonical broken line curve $B\subset\nr$ we have
$$\sum\limits_v d_v+\sum\limits_E d_E=12\operatorname{rot}(B),$$
where 
the first sum is taken over all vertices $v$ of $P$, the second sum is taken over all of its edges $E$,
and $\operatorname{rot}(B)$ is the rotation number of the oriented immersed broken line $B$.
\end{theorem}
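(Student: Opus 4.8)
The plan is to reduce the immersed‐curve statement to the already‑proven polygon case (Theorem \ref{12n}, whose proof closes up after one full turn to give $12$), and then to account for the fact that an immersed curve may wind around more than once before closing up. The key observation is that the entire argument of Theorem \ref{12n} is \emph{local}: the numbers $d_v$ and $d_E$ are defined intrinsically at each vertex and edge via the bases $(\alpha_v,\beta_E)$ of $N$, and the proof proceeds by tracking how the class in the abelianization $\widetilde{\operatorname{SL}}_2(\Z)/[\widetilde{\operatorname{SL}}_2(\Z),\widetilde{\operatorname{SL}}_2(\Z)]\approx\Z$ changes as we march from edge to vertex to edge around $B$.

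First I would set up the data carefully for the immersed case. I fix a co‑orientation of $B$ and an orientation of $N$; together these determine an orientation of $B$, hence a cyclic traversal of its edges and vertices. As in Theorem \ref{12n}, to each vertex $v$ I attach the primitive outward bissectrice vector $\alpha_v\in N$, and to each edge $E$ the primitive vector $\beta_E\in N$ in the direction of the traversal. Since every vertex is canonical, $(\alpha_v,\beta_E)$ is a basis of $N$ whenever $v$ is adjacent to $E$. I would then verify that the co‑orientation fixes the signs of $d_v$ and $d_E$ exactly as in the polygon case: the sign records whether the pair of bissectrices meet on the interior side of the relevant half‑plane.

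Next I would run the same telescoping computation. Walking once around $B$ in the chosen orientation, I pass alternately from a basis $(\alpha_v,\beta_{E_-})$ to $(\alpha_v,\beta_{E_+})$ at each vertex (contributing $d_v$ to the $\Z$‑class) and from $(\alpha_{v_-},\beta_E)$ to $(\alpha_{v_+},\beta_E)$ along each edge (contributing $d_E$). The total change in the abelianized class over the full closed traversal equals $\sum_v d_v+\sum_E d_E$ on the one hand. On the other hand, because the product of the corresponding lifts $\widetilde\Phi$ in $\widetilde{\operatorname{SL}}_2(\Z)$ measures the central element associated to the total turning of the frame $(\alpha,\beta)$ along $B$, this class equals $12$ times the number of net complete turns—which is precisely $\operatorname{rot}(B)$.

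The main obstacle, and the one genuinely new point beyond Theorem \ref{12n}, is the final identification of the accumulated central element with $12\operatorname{rot}(B)$ for a possibly self‑intersecting curve that winds $\operatorname{rot}(B)$ times. In the embedded polygon case the frame returns after exactly one rotation, giving $12$; here I must argue that lifting to the universal cover $\widetilde{\operatorname{SL}}_2(\R)$ linearizes the turning, so that the total lift is the $\operatorname{rot}(B)$‑th power of the complete‑turn generator. Concretely, the oriented tangent (equivalently, the bissectrice frame) of the immersed $B$ defines a loop in $\operatorname{SL}_2(\R)$ whose homotopy class in $\pi_1$ is $\operatorname{rot}(B)$ times the generator; since the central extension $\widetilde{\operatorname{SL}}_2(\Z)$ detects exactly this winding and \eqref{eq-twelve} assigns $12$ to one full turn, the product of the local lifts maps to $12\operatorname{rot}(B)$ under \eqref{eq-widetilde}. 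Checking that the \emph{canonical} (as opposed to merely right‑angle) condition does not spoil this additivity—i.e.\ that the $d_v,d_E$ bookkeeping still assembles into a single well‑defined element of $\widetilde{\operatorname{SL}}_2(\Z)$ independent of the immersion's self‑crossings—is the step I would write out most carefully.
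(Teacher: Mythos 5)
Your proposal is correct and follows essentially the same route as the paper: the paper proves the immersed case by observing that the frame-tracking argument of Theorem \ref{12n} (telescoping the passages $(\alpha_v,\beta_{E_-})\to(\alpha_v,\beta_{E_+})$ and $(\alpha_{v_-},\beta_E)\to(\alpha_{v_+},\beta_E)$ in the abelianization of $\widetilde{\operatorname{SL}}_2(\Z)$) applies verbatim, with the accumulated central element now corresponding to $\operatorname{rot}(B)$ full turns rather than one. Your elaboration of the final identification with $12\operatorname{rot}(B)$ via the winding of the frame loop in $\pi_1(\operatorname{SL}_2(\R))$ is exactly the content the paper leaves implicit.
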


\ \\


\noindent{\Large\bf Part II. Tropical wave fronts.}
\section{Lattice polygons and half-planes}
Let $N\approx \Z^2$ be a rank two lattice and 
$$\nr=N\otimes\R\approx\R^2$$ be the corresponding real plane.
We denote by $M=N^*=\operatorname{Hom}(N,\Z)$ the dual lattice and by 
$\mr=M\otimes\R=\nr^*$ the dual plane.
Recall that a {\em lattice polygonal domain} is a proper closed subset
$\Phi\subset\nr$
that can be represented as the convex hull of a collection of the lattice points (the points from $N$). 
If this collection is finite then we say that $\Phi$ is a (finite) lattice polygon.

\begin{definition}
\label{def_discretefront}
The decreasing sequence 
$$\Phi=\Phi[0]\supset\Phi[1]\supset\Phi[2]\supset\Phi[3]\dots,$$ 
such that $\Phi[t]$ is the convex hull of the set of lattice points in the interior of $\Phi[t-1]$ for every natural $t$,
is called the {\em lattice evolution} of a lattice polygon $\Phi$. 
The corresponding sequence of polygonal boundaries 
$$\dd\Phi=\dd\Phi[0],\dd\Phi[1],\dd\Phi[2],\dd\Phi[3]\dots$$ 
is called the {\em polygonal tropical wave front evolution} $\dd\Phi[t]$ in discrete time 
$t\in\{0,1,2,\dots\}$. 
\end{definition}

Note that the lattice evolution of $\Phi$ is a discrete time evolution, i.e. $\Phi[t]$ is only defined for $t\in\{0,1,2,\dots\}$. 

It turns out that this definition can be generalized to other boundaries of convex domains, and to evolution in continuous times. We do it with the help of {\em half-plane propagations}.

A closed (rational slope) half-plane $H\subset\nr$ is given by 
\begin{equation} H=\{p\in\nr:\lambda(p)\geq c\},\label{eq_halfplane}\end{equation}
$c\in\R$ and $0\neq\lambda\in M$. 
Here we furthermore assume that $\lambda\in M$ is a primitive element of the lattice, so that $\lambda\in M$ and $c\in\R$ are uniquely determined by the subset $H\subset\nr$. We refer to $\lambda\in M$ as the {\em slope} of $H$.
The half-plane \eqref{eq_halfplane} is called a {\em lattice} half-plane if $c\in\Z$. 

We define the (continuous time) propagation of a rational slope half-plane \eqref{eq_halfplane} by
\begin{equation}\label{eq_halfplaneshift}
H(t)=\{p\in\nr:\lambda(p)\geq c+t\}.\end{equation} 
This is a continuous time evolution. 

\begin{lemma}\label{lem_qhalfplaneshift}
If $H$ is a lattice half-plane then $H(t)=H[t]$
for any $t\in\{0,1,2,\dots\}.$
\end{lemma}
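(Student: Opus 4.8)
The plan is to prove that the two sequences coincide by induction on $t$, reducing the whole statement to a single assertion about one step of the lattice evolution: if $K=\{p\in\nr:\lambda(p)\ge k\}$ is a lattice half-plane with $k\in\Z$, then the convex hull of the lattice points in the interior of $K$ is exactly $\{p\in\nr:\lambda(p)\ge k+1\}$. Granting this, the base case $t=0$ is the tautology $H[0]=H=H(0)$, and if $H[t-1]=H(t-1)=\{\lambda\ge c+(t-1)\}$, then applying the one-step assertion with $k=c+t-1$ (which is an integer, since $c\in\Z$) gives $H[t]=\{\lambda\ge c+t\}=H(t)$. So the entire content is concentrated in this single evolution step.

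For that step I would first locate the lattice points in the interior $\{p:\lambda(p)>k\}$ of $K$. Since $\lambda\in M$ is primitive, the induced homomorphism $\lambda\colon N\to\Z$ is surjective, so $\lambda(p)\in\Z$ for every $p\in N$. Combined with $k\in\Z$, the strict inequality $\lambda(p)>k$ is equivalent to $\lambda(p)\ge k+1$; hence the interior lattice points are precisely $\{p\in N:\lambda(p)\ge k+1\}$. It then remains to show that the convex hull of this set is the closed half-plane $\{\lambda\ge k+1\}$. The inclusion into the half-plane is immediate, and the reverse inclusion is the substantive point.

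To obtain the reverse inclusion I would exploit primitivity once more. For each integer $m\ge k+1$ the level set $\{p\in N:\lambda(p)=m\}$ is a coset of the rank-one sublattice $\ker\lambda\cap N$ (nonzero and saturated precisely because $\lambda$ is primitive), so it is a full, doubly infinite arithmetic progression of lattice points along the line $\{\lambda=m\}$; its convex hull is therefore that entire line. Taking convex combinations of two consecutive such lines fills the closed strip between them, and the union of these strips over $m=k+1,k+2,\dots$ is the whole half-plane $\{\lambda\ge k+1\}$. This argument uses only finite convex combinations, so no separate closure step is needed and the honest convex hull already equals the closed half-plane.

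The main obstacle is exactly this last point: that an infinite \emph{discrete} set of lattice points can have a closed half-plane (boundary line included) as its convex hull. The resolution hinges on primitivity of $\lambda$, which guarantees that each integer level set is not merely nonempty but a complete translate of a line's worth of lattice points, so that the boundary line $\{\lambda=k+1\}$ is recovered without any limiting process. Equivalently, one could first normalize by a tropical isomorphism preserving $N$ to arrange $\lambda=(1,0)$ and $c=0$; the lattice evolution and the propagation \eqref{eq_halfplaneshift} are both equivariant under $\operatorname{GL}_2(\Z)$ and integer translations, after which the computation reduces to the transparent statement that the lattice points with $x>m$ have convex hull $\{x\ge m+1\}$.
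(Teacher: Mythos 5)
Your proof is correct, and it is a more detailed (but ultimately equivalent) version of the paper's. The paper disposes of the lemma in one sentence: the lattice evolution of Definition \ref{def_discretefront} is equivariant under affine automorphisms of the lattice $N$, so it suffices to verify the claim for the upper half-plane $\{(x,y):y\ge 0\}$, where it is declared obvious. Your primary argument is the coordinate-free version of that verification: induction reduces everything to a single evolution step; primitivity of $\lambda$ (surjectivity of $\lambda\colon N\to\Z$) identifies the interior lattice points of $\{\lambda\ge k\}$ as $N\cap\{\lambda\ge k+1\}$; and the fact that each integer level set is a full coset of $\ker\lambda\cap N$ lets you recover the whole closed half-plane $\{\lambda\ge k+1\}$ as an honest convex hull of finitely many points at a time, with no closure step. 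In effect you have written out exactly the two facts that the paper's ``obvious'' conceals, and your closing remark about normalizing $\lambda$ to $(1,0)$ by a lattice automorphism \emph{is} the paper's proof. One cosmetic point: the sublattice $\ker\lambda\cap N$ is rank one and saturated for \emph{any} nonzero $\lambda\in M$; what primitivity actually buys (and what you correctly use elsewhere) is surjectivity of $\lambda$ on $N$, i.e.\ nonemptiness of every integer level set, so that parenthetical attribution is slightly off but harmless. The trade-off between the two routes: the paper's is shorter, while yours is self-contained and makes visible precisely where integrality of $c$ and primitivity of $\lambda$ enter the argument.
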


\begin{proof}
As the evolution from Definition \ref{def_discretefront}  is invariant under affine automorphisms of $N$,
it suffices to verify the statement for the upper half-plane $\{(x,y)\in\R^2\ |\ y\ge 0\}$ where it is obvious.
\end{proof}

We use \eqref{eq_halfplaneshift} as the basic definition in the wave front propagation in tropical geometry. 
Note that the condition of being primitive for $\lambda$ is the analogue of asking the normal vector to have length one in the classical setup of wave fronts.  

\section{Tropical wave front propagation}
More generally, suppose that $\Phi\subset\nr$ is a convex domain. Recall that its {\em support half-plane} $H$ is a half-plane \eqref{eq_halfplane} such that $H\supset\Phi$ and $\partial H\cap\partial\Phi\neq\emptyset$.
In other words, we have $H\supset\Phi$, but $H(\epsilon)\not\subset\Phi$ for any $\epsilon>0$.
We denote by $\mathcal{H}_\Phi$ the set of all support half-planes of $\Phi$ with rational slopes.

\begin{definition}\label{def_admissible}
A domain $\Phi\subset\nr$ is admissible if it is closed, convex and $\mathcal{H}_\Phi\neq\emptyset.$  
\end{definition}

\begin{remark}\label{rem_alladmissible}
A convex and closed domain is not admissible if and only if it contains a line with irrational slope. 
Namely, it is either equal to the whole plane, a half-plane with irrational slope or a strip with irrational slope. These are the only closed convex domains that do not propagate tropical wave fronts.
\end{remark}

We reiterate that we have considered only the propagation of half-planes with rational slope.
\begin{lemma}\label{lem_restorephi}
If $\Phi\subset\nr$ is an admissible domain then
$\Phi=\bigcap\limits_{H\in\mathcal{H}_\Phi}H.$
\end{lemma}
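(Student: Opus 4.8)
The plan is to prove the two inclusions separately. The inclusion $\Phi\subset\bigcap_{H\in\mathcal{H}_\Phi}H$ is immediate from the definition of support half-plane: every $H\in\mathcal{H}_\Phi$ satisfies $H\supset\Phi$ by definition, so $\Phi$ is contained in their intersection. All the content is in the reverse inclusion $\bigcap_{H\in\mathcal{H}_\Phi}H\subset\Phi$, equivalently, that every point $p\notin\Phi$ is excluded by at least one rational-slope support half-plane.

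So first I would fix a point $p\in\nr\setminus\Phi$. Since $\Phi$ is closed and convex, the Hahn--Banach separation theorem (in its finite-dimensional form) produces a closed half-plane $H_0=\{\lambda(p)\ge c\}$ with $\Phi\subset H_0$ and $p\notin H_0$, i.e. a \emph{separating} half-plane. The difficulty is that this separating half-plane need not have rational slope, and need not be a \emph{support} half-plane (its boundary need not touch $\partial\Phi$). I would first push $H_0$ inward until it becomes supporting: replace $c$ by $c'=\inf_{x\in\Phi}\lambda(x)$, which is finite because $\Phi$ lies on one side; the resulting half-plane still contains $\Phi$, still excludes $p$ provided $p$ lies strictly on the far side, and now has its boundary meeting $\overline\Phi=\Phi$. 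The genuine obstacle is the rationality of the slope, so this is where the admissibility hypothesis must enter.

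The key step is therefore to show that the separating direction $\lambda$ can be taken rational. Here I would invoke admissibility through Remark~\ref{rem_alladmissible}: since $\Phi$ is admissible it contains no line of irrational slope, and in fact $\mathcal{H}_\Phi\ne\emptyset$. The plan is to argue that the set of slopes $\lambda\in\mr$ that strictly separate $p$ from $\Phi$ is an \emph{open} cone (openness because $\Phi$ is closed and $p\notin\Phi$, so the separating inequality $\lambda(p)<\inf_{x\in\Phi}\lambda(x)$ is a strict open condition stable under small perturbation of $\lambda$). A nonempty open subset of $\mr$ necessarily contains a primitive rational direction, since rational directions are dense. Perturbing $\lambda$ slightly to a nearby primitive rational $\lambda'\in M$ preserves strict separation, and then pushing the boundary inward as above yields a rational-slope support half-plane $H'\in\mathcal{H}_\Phi$ with $p\notin H'$. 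This exhibits $p\notin\bigcap_{H\in\mathcal{H}_\Phi}H$, completing the reverse inclusion.

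I expect the main obstacle to be a clean treatment of the boundary case, namely a point $p$ lying on the boundary of the separating half-plane rather than strictly outside it; one must ensure the separation is \emph{strict} so that it survives the rational perturbation of the slope. This is exactly where one needs $\Phi$ to contain no irrational line: if $\Phi$ were, say, an irrational strip, the separating directions of an exterior point could degenerate to a single irrational slope with no rational slope nearby that still separates, which is precisely the non-admissible situation excluded by Remark~\ref{rem_alladmissible}. Handling this degeneracy—verifying that admissibility guarantees an \emph{open} cone of separating directions rather than a lower-dimensional set—is the crux of the argument.
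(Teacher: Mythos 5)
Your overall route is the same as the paper's: the inclusion $\Phi\subset\bigcap_{H\in\mathcal{H}_\Phi}H$ is definitional, and the content is that every $p\notin\Phi$ can be separated from $\Phi$ by a rational-slope line, which is then pushed to the support position. The paper asserts the rational separation in one sentence; you attempt to prove it, and this is where there is a genuine gap. Your key claim --- that for admissible $\Phi$ the set of strictly separating directions $\{\lambda\ :\ \lambda(p)<\inf_{x\in\Phi}\lambda(x)\}$ is an \emph{open} cone --- is false. Take $\Phi=\{(x,y):y\ge 0\}$, a rational half-plane, which \emph{is} admissible by Remark \ref{rem_alladmissible}, and $p=(0,-1)$. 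The only strictly separating directions are the positive multiples of $(0,1)$: for any $\lambda=(a,b)$ with $a\neq 0$ one has $\inf_{x\in\Phi}\lambda(x)=-\infty$. So the separating cone is a single ray, not open; the same degeneration occurs for a rational strip. The underlying error is the assertion that strict separation is ``stable under small perturbation of $\lambda$'': when $\Phi$ is unbounded, the function $\lambda\mapsto\inf_{x\in\Phi}\lambda(x)$ is only upper semicontinuous and can jump to $-\infty$ under arbitrarily small perturbations, so the strict inequality is not an open condition. Openness of the separating cone is automatic only for bounded $\Phi$. Consequently your final paragraph misstates what admissibility buys: it does not guarantee an open cone of separating directions; it guarantees that \emph{when} that cone degenerates to a ray, the ray is rational.

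The repair is a dichotomy on the residual (recession) cone $\Sigma(\Phi)$. If $\Sigma(\Phi)$ contains no line, then its dual cone in $\mr$ has nonempty interior, and the concave function $g(\lambda)=\inf_{x\in\Phi}\lambda(x-p)$ is finite, hence continuous, on that interior; moving along a segment from a separating direction $\lambda_0$ (which exists by Hahn--Banach, since $\Phi$ is closed and $\{p\}$ is compact) toward an interior point and using concavity, one finds an interior point with $g>0$, so the separating cone does have nonempty interior and therefore contains a primitive rational $\lambda'\in M$; pushing $\{\lambda'\ge c\}$ inward to the support position finishes as you describe. If instead $\Sigma(\Phi)$ contains a line $\ell$, then $\Phi$ is invariant under translation along $\ell$, so $\Phi$ is a half-plane, a strip, or a line parallel to $\ell$; admissibility forces $\ell$ to be rational, and then the (rational-slope) support half-planes of $\Phi$ themselves already exclude every exterior point. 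Without this case analysis, your proof fails exactly on the admissible domains --- rational half-planes and rational strips --- for which the separating cone is degenerate.
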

\begin{proof}
Clearly, we have $\Phi\subset \bigcap\limits_{H\in\mathcal{H}_\Phi}H$. For the opposite inclusion we note that if $p\notin\Phi$ then $p$ and $\Phi$ can be separated by a line with rational slope.
\end{proof}

Now we are ready to define the tropical wave front propagation. 
\begin{definition}\label{def_continousfront}
Suppose $\Phi\subset\nr$ is admissible and $t\geq0$ is a number.
The tropical propagation of $\Phi$ in time $t$ is 
\begin{equation}\label{trop-propagation}
\Phi(t)=\bigcap\limits_{H\in\mathcal{H}_\Phi}H(t).
\end{equation}
The tropical wave front of $\partial\Phi$ in time $t$ is defined as $\partial\Phi(t).$
\end{definition}
By definition, $\Phi(t)\subset\nr$ is also an admissible domain.

\begin{remark}
Note that in the definition of half-plane evolution \ref{eq_halfplaneshift} we may take positive or negative values of $t$, and $H(t_1)\neq H(t_2)$ whenever $t_1\neq t_2$.
The formula \eqref{trop-propagation} also makes sense for any real value of $t$, however, its right-hand side simply coincides with $\Phi$ whenever $t\le 0$. This is the reason why we restrict to $t\ge 0$ in this definition. Later, in Proposition \ref{negative-evolution}, we give a different definition for negative time evolution $\Phi(t)$ (which is however not defined for all negative $t$). 
\end{remark}

Let $\Phi\subset\nr$ be an admissible domain and $t>0$ a number.
\begin{lemma}
The domain $\Phi(t)$ is contained in the interior of $\Phi$. 
\end{lemma}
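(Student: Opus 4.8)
The plan is to prove the contrapositive, localized at the boundary. Since $H(t)\subset H$ for every half-plane $H$ and every $t>0$, formula \eqref{trop-propagation} together with Lemma \ref{lem_restorephi} gives $\Phi(t)=\bigcap_{H\in\mathcal H_\Phi}H(t)\subset\bigcap_{H\in\mathcal H_\Phi}H=\Phi$. As $\Phi\subset\nr\approx\R^2$ is closed and convex, it is the disjoint union of its interior and its boundary, so it suffices to show that $\Phi(t)$ contains no point of $\partial\Phi$. (If $\Phi$ has empty interior, i.e. is a point, a segment, a ray or a line of rational slope, then two antiparallel support half-planes $\{\lambda\geq c\}$ and $\{-\lambda\geq -c\}$ already force $\Phi(t)=\emptyset$, and there is nothing to prove.) For $H=\{p:\lambda(p)\geq c\}\in\mathcal H_\Phi$ we have $c=\inf_{q\in\Phi}\lambda(q)$, and for $p\in\Phi$ the quantity $g(\lambda):=\lambda(p)-c\geq0$ satisfies: $p\notin H(t)$ if and only if $g(\lambda)<t$. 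Since $\Phi(t)\subset H(t)$, I am reduced to showing that for $p\in\partial\Phi$ one has $\inf_{H\in\mathcal H_\Phi}g(\lambda)=0$.

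Fix $p\in\partial\Phi$ and let $u$ be an outward normal direction at $p$, which exists by the supporting hyperplane theorem. If some rational direction occurs among the outward normals at $p$, in particular whenever $p$ is a corner (the normal cone at $p$ is two-dimensional, hence meets the dense set of rational directions), then the associated primitive $\lambda\in M$ realizes $\lambda(p)=\inf_\Phi\lambda$, so $g(\lambda)=0$ and we are done. The remaining case is that $p$ is a smooth boundary point whose unique outward normal $u$ is irrational. Here I note that $g(\lambda)=\sup_{q\in\Phi}\lambda(p-q)=h(\lambda)$ is the value at $\lambda$ of the support function $h=h_{p-\Phi}$ of the convex set $p-\Phi$, which contains the origin on its boundary with outward normal $-u$; thus $h\geq0$ and $h(-u)=h_\Phi(u)-\langle u,p\rangle=0$. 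Writing $\hat\lambda=\lambda/\|\lambda\|$ and using homogeneity, $g(\lambda)=\|\lambda\|\,h(\hat\lambda)$. The idea is to approximate $-u$ by rational directions fast enough to beat the factor $\|\lambda\|$: by Dirichlet's theorem on Diophantine approximation there are infinitely many primitive $\lambda\in M$ with $\|\hat\lambda-(-u)\|\leq C\|\lambda\|^{-2}$, and combined with a local Lipschitz bound $h(\hat\lambda)\leq L\,\|\hat\lambda-(-u)\|$ (valid since $h$ is a finite convex function near $-u$ and $h(-u)=0$) this yields $g(\lambda)\leq LC\,\|\lambda\|^{-1}\to0$, so $\inf g=0$, as needed.

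The main obstacle is precisely this quantitative step, namely securing the local Lipschitz bound on $h$ at $-u$ and ensuring the approximating $\lambda$ actually lie in $\mathcal H_\Phi$. When $\Phi$ is bounded this is immediate, because $h$ is then globally Lipschitz with constant $\operatorname{diam}(\Phi)$ and every rational direction has finite support value. When $\Phi$ is unbounded, $h$ is finite only on the cone dual to the recession cone of $\Phi$, and the delicate situation is when $-u$ lies on the boundary of this cone, i.e. when the supporting line of $\Phi$ at $p$ is parallel to a recession ray. In that situation I would exploit that $\Phi$ is admissible, so by Remark \ref{rem_alladmissible} its recession cone contains no irrational line; after normalizing the recession cone by a tropical isomorphism one checks directly that the rational convergents to $-u$ still lie inside the cone of finiteness and approach $-u$ at the Dirichlet rate, keeping $\|\lambda\|\,h(\hat\lambda)$ bounded by $O(\|\lambda\|^{-1})$. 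This confirms $\inf_{H\in\mathcal H_\Phi}g(\lambda)=0$ in all cases, hence $p\notin\Phi(t)$ and $\Phi(t)\subset\partial$-free part of $\Phi$, i.e. the interior of $\Phi$.
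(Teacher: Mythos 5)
Your proof runs on the same engine as the paper's: for a boundary point whose only outward normal is irrational, Dirichlet approximation gives primitive $\lambda\in M$ whose direction is within $O(\|\lambda\|^{-2})$ of the normal, and since the support-value deficit scales as $g(\lambda)=\|\lambda\|\,h(\hat\lambda)$, the quadratic rate beats the factor $\|\lambda\|$ and forces $g(\lambda)=O(\|\lambda\|^{-1})<t$. The paper does exactly this computation, but in coordinates and with a different device: it picks an auxiliary point $q\in\partial H_p\setminus\Phi$ (which exists by admissibility, via Remark \ref{rem_alladmissible}) and compares the two points $p,q$ on the supporting line, so that the whole estimate is a two-point vertical-distance calculation ($|d|/b_n^2$ versus the shift $t/b_n$) and no support-function apparatus, or discussion of where $h$ is finite, is ever needed. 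Your reduction to $\inf_{H\in\mathcal{H}_\Phi}g(\lambda)=0$, your rational-normal case, and your bounded case (global Lipschitz constant $\operatorname{diam}\Phi$) are complete and correct, as is the unbounded case when $-u$ lies in the interior of the cone of finiteness.

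The genuine gap is the sub-case you yourself flag: $\Phi$ unbounded and $-u$ on the boundary of the dual of the recession cone (e.g.\ $p$ on the irrational edge of the cone spanned by $(1,\sqrt2)$ and $(-1,0)$). There your stated justification of the Lipschitz bound --- ``$h$ is a finite convex function near $-u$'' --- is false: $h=+\infty$ on one side of the ray $\R_{\ge0}(-u)$, and ``one checks directly'' conceals precisely the two assertions that need proof. First, only approximants inside the cone of finiteness $K^*$ may be used, so you need infinitely many convergents on that one side (true, since consecutive continued-fraction convergents alternate sides, but it must be said). Second, a convex function finite at a boundary point of its domain need \emph{not} admit a linear bound there (think of $-\sqrt{x}$ at $0$), so Lipschitz-type control is not free; what rescues the argument is that $-u$ is a \emph{minimum} of $h$ (since $h\ge0=h(-u)$), whence the secant inequality $h(-u+sw)\le (s/s_0)\,h(-u+s_0w)$ for a direction $w$ pointing into the interior of $K^*$, combined with homogeneity, yields $g(\lambda)=O(\|\lambda\|^{-1})$ along the one-sided convergents. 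With those two points written out your proof closes; without them the delicate case is asserted, not proved. A smaller slip: your parenthetical on empty-interior $\Phi$ fails for a segment or ray of \emph{irrational} slope, where no rational $\lambda$ gives two antiparallel support half-planes sharing a boundary line; those points must instead be fed through the same Dirichlet argument (which works, since their normals are irrational), so the case is harmless but not ``nothing to prove.''
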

\begin{proof}
Since $\Phi\subset\nr$ is a convex closed domain, for every $p\in\dd\Phi$ there exists a support half-plane $H_p\subset\nr$ such that $\dd H_p\ni p$. 
Suppose that the slope of $H_p$ is rational. Then $H_p(t)\not\ni p$ and thus $\Phi(t)\not\ni p$.

Suppose that the slope of $H_p$ is irrational. Choose a basis in $M$ and the corresponding coordinates in $\nr$. The slope of the boundary line $\dd H_p$ is represented by an irrational number $\rho\in\R$. By the Lagrange-Dirichlet theorem, we can approximate $\rho$ by rational numbers $\frac{a_n}{b_n}$, where $a_n,b_n\in\Z$, $b_n\to +\infty$, and $$|\rho-\frac {a_n}{b_n}|<\frac 1{b_n^2}.$$
Since $\Phi$ is admissible, we may choose a point $q\in\dd H_p\setminus \Phi$.
Then, for sufficiently large $n$ there are support half-planes $H_n$ of slope $\frac{a_n}{b_n}$ not containing $q$.
By the definition of tropical propagation, the half-plane $H_n(t)$ is the half-plane $H_n$ translated vertically by $\frac t{b_n}$. Since the ``horizontal distance" between $q$ and $p$ does not depend on $n$, we have $H_n(t)\not\ni p$, and thus $\Phi(t)\not\ni p$ for large $n$.
\end{proof}

By a {\em polygonal domain} $\Psi\subset\nr$ of rational slopes we mean 
a convex domain such that for every bounded region $R\subset\nr$ the intersection $\Psi\cap R$ may be presented as the intersection of a finite collection of half-planes with rational slope.
The boundary $\dd\Psi$ is a broken line consisting of {\em sides} of $\Psi$.
\begin{proposition}\label{prop_positiveshiftispolygon}
If $\Phi\subset\nr$ is admissible and $t>0$ then $\Phi(t)$ is a polygonal domain.
\end{proposition}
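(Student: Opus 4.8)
The plan is to describe $\Phi(t)$ through the support values of $\Phi$ and then to show that, near any point, only finitely many rational slopes are relevant. For each primitive $\lambda\in M$ that is the slope of some half-plane in $\mathcal H_\Phi$, let $c_\lambda=\min_{q\in\Phi}\lambda(q)$, so that $H_\lambda=\{\lambda\ge c_\lambda\}$ is the corresponding support half-plane and $H_\lambda(t)=\{\lambda\ge c_\lambda+t\}$. Writing $g_\lambda(p)=\lambda(p)-c_\lambda$, an affine function with gradient $\lambda$, formula \eqref{trop-propagation} reads $\Phi(t)=\{p:g_\lambda(p)\ge t\ \text{for all such }\lambda\}=\{p:f(p)\ge t\}$ where $f=\inf_\lambda g_\lambda$. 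As an infimum of affine functions $f$ is concave, and (for $t>0$, in the non-degenerate case where $\Phi(t)$ has interior) the tropical wave front $\dd\Phi(t)$ lies in the level set $\{f=t\}$. Thus it suffices to prove that $f$ is, locally on $\{f>0\}$, the minimum of finitely many of the functions $g_\lambda$: then $\Phi(t)$ is locally the intersection of finitely many rational half-planes $H_\lambda(t)$.

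The key quantitative input, which I would isolate as the main step, is the estimate
$$g_\lambda(p')\ \ge\ \|\lambda\|\cdot\operatorname{dist}(p',\dd\Phi)$$
for every $p'\in\operatorname{int}\Phi$ and every slope $\lambda$ of $\mathcal H_\Phi$, where $\|\cdot\|$ and $\operatorname{dist}$ refer to the auxiliary Euclidean structure given by a basis of $N$. Indeed, $g_\lambda(p')/\|\lambda\|$ is exactly the Euclidean distance from $p'$ to the supporting line $\dd H_\lambda$; since $H_\lambda\supset\Phi$ is a support half-plane, $\dd H_\lambda$ is disjoint from $\operatorname{int}\Phi$, hence from the open ball $B(p',\operatorname{dist}(p',\dd\Phi))\subset\operatorname{int}\Phi$, which gives the inequality. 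The role of the denominator of the slope is now visible: since distinct primitive covectors have $\|\lambda\|\to\infty$, the estimate forces $g_\lambda$ to be large for fine slopes, and this is precisely what I expect to be the main obstacle, namely ruling out that infinitely many large-denominator half-planes contribute to the front.

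To finish, fix $p\in\dd\Phi(t)$. By the previously established lemma, $\Phi(t)\subset\operatorname{int}\Phi$, so $\rho:=\operatorname{dist}(p,\dd\Phi)>0$; on a ball $B(p,r)$ with $r<\rho/2$ every point $p'$ still satisfies $\operatorname{dist}(p',\dd\Phi)>\rho/2$. Let $S$ be the set of slopes $\lambda$ of $\mathcal H_\Phi$ with $\|\lambda\|\le 2(t+1)/\rho$; it is finite, being a set of primitive lattice covectors of bounded norm. The estimate shows that any $\lambda\notin S$ satisfies $g_\lambda(p')>t+1$ on $B(p,r)$, whereas $\min_{\lambda\in S}g_\lambda$ is near $f(p)=t$ there; hence on $B(p,r)$ we have $f=\min_{\lambda\in S}g_\lambda$ and therefore $\Phi(t)\cap B(p,r)=\bigl(\bigcap_{\lambda\in S}H_\lambda(t)\bigr)\cap B(p,r)$. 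Given any bounded region $R$, I would then cover the compact set $\dd\Phi(t)\cap\overline R$ by finitely many such balls, take the union $S$ of the associated finite slope sets, and verify by a standard convexity argument (separating any point of $\bigcap_{\lambda\in S}H_\lambda(t)\setminus\Phi(t)$ from $\Phi(t)$ by a binding slope in $S$) that $\Phi(t)\cap R=\bigl(\bigcap_{\lambda\in S}H_\lambda(t)\bigr)\cap R$. This is the presentation required by the definition of a polygonal domain; everything beyond the distance estimate is routine, and that estimate removes the obstacle as soon as $\Phi(t)\subset\operatorname{int}\Phi$ guarantees a positive distance to $\dd\Phi$.
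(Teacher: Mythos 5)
Your proposal is correct and takes essentially the same approach as the paper: both fix an auxiliary Euclidean metric, invoke the preceding lemma to get a positive distance between $\Phi(t)$ and $\dd\Phi$, observe that the boundary of $H_\lambda(t)$ is offset from the support line by $t/\|\lambda\|$ (equivalently, your estimate $g_\lambda \ge \|\lambda\|\cdot\mathrm{dist}(\cdot,\dd\Phi)$), and conclude that only the finitely many primitive slopes of bounded norm can contribute near $\Phi(t)$. Your function $f=\inf_\lambda g_\lambda$ and the final covering argument are just a repackaging of the paper's exclusion of all half-planes with $t/\|\lambda\|<\epsilon$ in a bounded region.
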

\begin{proof}
Once again, we fix a basis of $M$, so that we have coordinates, and the induced Euclidean metric in $\nr\subset\R^2$.
In particular we can compute the norm $||\lambda||>0$ for the slope $\lambda\in M$ of a half-plane with rational slope.

In a bounded region, we may find $\epsilon>0$ such that the $\epsilon$-neighbourhood of $\Phi(t)$ is disjoint from $\dd\Phi$.
By the definition of $H(t)$, the distance between the boundary line of $H$ and that of $H(t)$ is equal to $\frac{t}{||\lambda||}$. Note that in the formula \eqref{trop-propagation} we may exclude all half-planes with $\frac{t}{||\lambda||}<\epsilon$ as their boundaries are disjoint from $\Phi(t)$. Since there are only finitely many slopes with $||\lambda||<\frac t\epsilon$, the domain $\Phi(t)$ is the intersection of finitely many half-planes with rational slope in any bounded region.
\end{proof}
\begin{corollary}\label{cor_finitenmbrofsides}
If $\Phi$ is compact, then $\Phi(t)$ is a finite polygon for any $t>0$.
\end{corollary}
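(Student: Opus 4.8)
The plan is to deduce this directly from Proposition \ref{prop_positiveshiftispolygon} together with the fact (established just above it) that $\Phi(t)$ is contained in the interior of $\Phi$. Since $\Phi$ is compact and $\Phi(t)\subset\Phi$, the domain $\Phi(t)$ is bounded. The only gap between ``polygonal domain'' and ``finite polygon'' is that the former is defined by a \emph{local} finiteness condition (finitely many sides in each bounded region), whereas the latter requires finitely many sides globally; boundedness is exactly what bridges this gap.

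Concretely, first I would fix a bounded region $R\subset\nr$ large enough to contain all of $\Phi$ (for instance a closed ball around the origin of sufficiently large radius, using the Euclidean metric coming from a chosen basis of $M$ as in the proof of Proposition \ref{prop_positiveshiftispolygon}). Since $\Phi(t)\subset\Phi\subset R$, we have the identification $\Phi(t)=\Phi(t)\cap R$. Next I would invoke Proposition \ref{prop_positiveshiftispolygon}, which guarantees that $\Phi(t)$ is a polygonal domain; by the very definition of a polygonal domain of rational slopes, its intersection $\Phi(t)\cap R$ with the bounded region $R$ is presentable as the intersection of a finite collection of rational-slope half-planes. Combining these two observations, $\Phi(t)$ itself is a finite intersection of half-planes.

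Finally I would note that a bounded, nonempty intersection of finitely many closed half-planes is a compact convex polygon with finitely many sides, which is precisely what is meant by a finite polygon; boundedness is inherited from $\Phi$. I do not anticipate any genuine obstacle here: the statement is a routine globalisation of Proposition \ref{prop_positiveshiftispolygon} once compactness is used, and the entire content lies in observing that the locally finite description becomes globally finite on a bounded set. The only point requiring a word of care is to confirm $\Phi(t)$ is genuinely bounded, which is immediate from $\Phi(t)\subset\Phi$ and the compactness hypothesis.
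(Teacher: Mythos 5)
Your proposal is correct and follows exactly the route the paper intends: the corollary is stated as an immediate consequence of Proposition \ref{prop_positiveshiftispolygon}, with compactness of $\Phi$ (hence boundedness of $\Phi(t)\subset\Phi$) turning the locally finite description of a polygonal domain into a globally finite one by taking the bounded region $R$ to contain all of $\Phi$. There is no gap; this is precisely the intended argument.
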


\section{Dual fan $\fan(\Phi(t))$, and its evolution}
Once we know that $\Phi(t)$, $t>0$, is a polygonal domain with rational slopes, we may consider
its {\em dual fan} $\fan(\Phi(t))$. By definition, $\fan(\Phi(t))\subset\mr$ is the union of rays generated by the slopes $\lambda\in M$ of the support half-planes $H_\lambda=\{\lambda(p)\ge c_\lambda+t\}$ at the sides of $\Psi$.
Note that $\fan(\Phi(t))$ determines the slopes $\lambda$ of the sides of $\Phi(t)$ as the primitive integer vectors in the directions of the rays. 
If we associate to each ray of $\fan(\Phi(t))$ of slope $\lambda\in M$ a number $c_\lambda\in\R$ then we get the {\em extended fan} of $\Phi(t)$ which completely determines $\Phi(t)$.





\begin{theorem}\label{fan-description}
If $t>0$ and $\Phi(t)$ has non-empty interior then $\Phi(t)$ is a canonical tropical domain.
Furthermore, we have $\fan(\Phi(t'))\subset\fan(\Phi(t))$ if $t'>t$. 
\end{theorem}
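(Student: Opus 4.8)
The plan is to treat each vertex of the polygonal domain $\Phi(t)$ separately, show that its tangent cone is canonical, and then use this local structure to control the evolution of the dual fan. Throughout I use that $\Phi(t)$ is a finite-sided convex polygonal domain for $t>0$ by Proposition \ref{prop_positiveshiftispolygon}, so that vertices and side-slopes are well defined.

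For the first assertion, fix a vertex $v$ of $\Phi(t)$ and let $\lambda_1,\lambda_2\in M$ be the primitive slopes of the two sides meeting at $v$. The tangent cone $\Sigma_v\subset\nr$ at $v$ has dual $\Sigma_v^*=\langle\lambda_1,\lambda_2\rangle\subset\mr$, so by the definition of a canonical angle it suffices to prove that $\langle\lambda_1,\lambda_2\rangle$ is an $A_{d-1}$-cone, where $d=|\lambda_1\wedge\lambda_2|$. I would introduce the convex, positively homogeneous function $g_v(\lambda)=\max_{p\in\Phi}\lambda(v-p)=\lambda(v)-c_\lambda$, the support function of $v-\Phi$, where $c_\lambda=\min_{p\in\Phi}\lambda(p)$ is the value in \eqref{eq_halfplane}. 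Since $\Phi(t)=\bigcap_{H\in\mathcal{H}_\Phi}H(t)$ and $v\in\Phi(t)$, every nonzero $\lambda\in M$ satisfies $g_v(\lambda)\ge t$, with equality for $\lambda=\lambda_1,\lambda_2$ because the two side-lines pass through $v$.

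The key step is that the lattice triangle $T=\mathrm{conv}(0,\lambda_1,\lambda_2)$ has no lattice points in its interior. Indeed, a point $x=\alpha_0\cdot 0+\alpha_1\lambda_1+\alpha_2\lambda_2$ in the interior of $T$ has $\alpha_0>0$, so convexity and homogeneity of $g_v$ give $g_v(x)\le(\alpha_1+\alpha_2)t=(1-\alpha_0)t<t$; as every nonzero lattice vector has $g_v\ge t$, no lattice point can lie in the interior of $T$. Because the edges $[0,\lambda_1]$ and $[0,\lambda_2]$ are primitive, Pick's formula applied to $T$ (area $d/2$, no interior lattice points) forces exactly $d-1$ lattice points in the interior of the segment $[\lambda_1,\lambda_2]$. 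These $d-1$ points together with $\lambda_1,\lambda_2$ are collinear and equally spaced, which exhibits $\langle\lambda_1,\lambda_2\rangle$ as an $A_{d-1}$-cone (width $d$, height $1$); hence $\Sigma_v$ is canonical and $\Phi(t)$ is a canonical tropical domain. I expect this Pick-theoretic rigidity — that an empty triangle over the apex forces height one rather than a general determinant-$d$ cone — to be the crux of the argument.

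For the inclusion of fans I would show that for each primitive $\mu\in M$ the set of times $s>0$ at which $\mu$ is a side of $\Phi(s)$ is downward closed. Fix $\mu$ and set $\psi(s)=f_{\Phi(s)}(\mu)-(c_\mu+s)\ge 0$, where $f_{\Phi(s)}(\mu)=\min_{p\in\Phi(s)}\mu(p)$; then $\mu$ is a side precisely when $\psi(s)=0$. Away from the finitely many times where vertices collide, the minimizer of $\mu$ over $\Phi(s)$ is a vertex $v(s)$ moving with velocity $w$ determined by $\alpha(w)=\beta(w)=1$ for the two adjacent slopes $\alpha,\beta$, and $\mu\in\langle\alpha,\beta\rangle$. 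Writing $\mu=a\alpha+b\beta$ gives $\psi'(s)=\mu(w)-1=(a+b)-1$. By the first part the cone $\langle\alpha,\beta\rangle$ is an $A_{d-1}$-cone, and every lattice point of such a cone satisfies $a+b\ge 1$; hence $\psi$ is nondecreasing on each piece, and continuity across collisions makes $\psi$ globally nondecreasing. Consequently, if $\mu$ is a side at some $t'$ then $\psi(t)\le\psi(t')=0$ for $0<t\le t'$, so $\mu$ is a side at every such $t$, which is exactly $\fan(\Phi(t'))\subseteq\fan(\Phi(t))$. The point requiring the most care is the behaviour at the collision times, where the active pair $(\alpha,\beta)$ changes; monotonicity survives there precisely because the relevant cones remain canonical, so I would treat these transitions as the main technical obstacle of the second part.
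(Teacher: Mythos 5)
Your first half is fine: the support-function convexity argument showing that $\operatorname{conv}(0,\lambda_1,\lambda_2)$ has no interior lattice points, combined with Pick's theorem, is a correct (and essentially equivalent) rendering of the paper's argument that every cone of $\fan(\Phi(t))$ is an $A_n$-cone, hence every vertex of $\Phi(t)$ is canonical. The gap is in the second half, and it is the step you state as an equivalence: ``$\mu$ is a side precisely when $\psi(s)=0$''. Only the ``only if'' direction is true. The equality $\psi(s)=0$ merely says that the line $\dd H_\mu(s)$ touches $\Phi(s)$, and it may touch it at a single vertex. This happens systematically, not exceptionally: if $v$ is a vertex of $\Phi(s)$ whose dual cone $\langle\lambda_1,\lambda_2\rangle$ is an $A_n$-cone with $n\ge 1$, then any lattice point $\mu$ interior to the segment $[\lambda_1,\lambda_2]$ has $\mu=a\lambda_1+b\lambda_2$ with $a+b=1$, so $s\le g_v(\mu)\le (a+b)s=s$ by your own convexity bound, i.e. $\psi(s)=0$, while $\mu$ is not a side. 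A concrete example: $\Phi=\{(x,y):y\ge 0,\ 2x+y\ge 0\}$ gives $\Phi(s)=\{y\ge s\}\cap\{2x+y\ge s\}$ with the single $A_1$-type vertex $(0,s)$, and $\mu=(1,1)$ has $\psi\equiv 0$ for all $s>0$ yet never lies in $\fan(\Phi(s))$. Therefore your concluding inference ``$\psi(t)\le\psi(t')=0$, so $\mu$ is a side at every $t\le t'$'' does not follow: monotonicity of $\psi$ shows only that the support line keeps touching $\Phi(t)$, not that a one-dimensional face with conormal $\mu$ exists there.

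What is missing is exactly the argument separating ``touches at a vertex'' from ``is a side'', and this is where the paper's proof does its work --- moreover it does it statically, with no tracking of the piecewise-linear evolution. If $\mu$ lies strictly inside a cone $C=\langle\lambda_1,\lambda_2\rangle$ of $\fan(\Phi(t))$ with corresponding vertex $v$ of $\Phi(t)$, write $\mu=a\lambda_1+b\lambda_2$ with $a,b>0$; part 1 gives $a+b\ge 1$. Since $\Phi(t+s)\subset H_{\lambda_1,\Phi}(t+s)\cap H_{\lambda_2,\Phi}(t+s)$, and the corner of the latter region is $v+sw$ with $\lambda_1(w)=\lambda_2(w)=1$, one gets $\min_{\Phi(t+s)}\mu\ge\mu(v)+s(a+b)\ge c_\mu+t+s$, and equality throughout forces $a+b=1$ and forces the minimum of $\mu$ over $\Phi(t+s)$ to be attained only at the corner point itself (since $a,b>0$, the corner region's minimum of $\mu$ is attained uniquely at its corner). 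In either case the face of $\Phi(t+s)$ on which $\mu$ attains its minimum is never one-dimensional, so $\mu\notin\fan(\Phi(t+s))$; this is the content of the paper's ``same argument'' step, with the case $a+b=1$ corresponding to its collinear half-plane lemma. Note also that your dynamical route has a latent circularity which you flagged but did not resolve: the events at which your piecewise analysis could change include precisely the appearance of new sides, which is the thing to be excluded, and for unbounded $\Phi$ even the finiteness of collision times is unclear. The static comparison above, made against the fixed support half-planes of $\Phi$ rather than against the evolving polygon, removes both that circularity and any need for finiteness of events.
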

\begin{proof}
Suppose that $\lambda_3\in M\cap C$ is a lattice point inside the cone $C$ of $\fan(\Phi(t))$ generated by $\lambda_1,\lambda_2\in M$.
By duality, if $\lambda_1,\lambda_2,\lambda_3\in M$ are collinear then for any $t\in\R$ the lines $\dd H_1(t)$, $\dd H_2(t)$, $\dd H_3(t)$ have a common intersection point, $H_j(t)=\{\lambda_j(p)\ge t\}$
Furthemore, if $\lambda_3$ is contained in the interior of the triangle with vertices $O,\lambda_1,\lambda_2$, then 
\begin{equation}
\label{convex-inclusion}
H_{\lambda_3}(t)\not\supset H_{\lambda_1}(t)\cap H_{\lambda_2}(t)
\end{equation}
if and only if $t> 0$.
Conversely, if $\lambda_3$ is in the exterior of the triangle with vertices $O,\lambda_1,\lambda_2$, then \eqref{convex-inclusion} holds if and only if $t<0$.

Suppose that a cone of $\fan(\Phi(t))$, $t>0$, is not an $A_n$-cone. Then we can find $\lambda_3\in M$ in the interior of the triangle with vertices $O,\lambda_1,\lambda_2$. 
Let $H_{\lambda_k,\Phi}$ be the support half-plane to $\Phi$ with the slope $\lambda_j$, $j=1,2,3$.
Note that 
$$\dd H_{\lambda_3,\Phi}\cap H_{\lambda_1,\Phi}\cap H_{\lambda_2,\Phi}\neq\emptyset$$ by definition of the support hyperplane.
But then, since $\lambda_3$ is in the interior of the triangle with vertices $O,\lambda_1,\lambda_2$, we have 
$$\dd H_{\lambda_3,\Phi}(t)\cap H_{\lambda_1,\Phi}(t)\cap H_{\lambda_2,\Phi}(t)\neq\emptyset$$ for any $t>0$.
We get a contradiction since $C$ has to be subdivided by the ray generated by $\lambda_3$.

By the same argument, a ray in $\fan(\Phi(s+t))\setminus\fan(\Phi(t))$ cannot appear in the case when every cone in $\fan(\Phi(t))$ is an $A_n$-cone.
\end{proof}

\section{Huygens principle}
\begin{theorem}[Huygens's principle]\label{prop_additivefront}
For an admissible domain $\Phi\subset\nr$ and $s,t\geq 0$ we have $$(\Phi(t))(s)=\Phi(t+s).$$
\end{theorem}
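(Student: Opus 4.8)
The plan is to prove the equality $(\Phi(t))(s)=\Phi(t+s)$ by a double inclusion, reducing everything to the behaviour of individual support half-planes, where the additivity is transparent. The key elementary observation is that for a single rational-slope half-plane $H=\{p:\lambda(p)\ge c\}$ the propagation satisfies $(H(t))(s)=\{p:\lambda(p)\ge c+t+s\}=H(t+s)$, since the slope $\lambda$ is unchanged under propagation and only the constant $c$ shifts additively. Thus the whole difficulty is bookkeeping: relating $\mathcal{H}_{\Phi(t)}$, the set of rational-slope support half-planes of the propagated domain, to $\mathcal{H}_\Phi$, the set for the original domain.

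First I would establish the inclusion $\Phi(t+s)\subset(\Phi(t))(s)$, which is the easier direction. By Lemma \ref{lem_restorephi} applied to the admissible domain $\Phi(t)$, we have $(\Phi(t))(s)=\bigcap_{K\in\mathcal{H}_{\Phi(t)}}K(s)$. Each such $K$ is a support half-plane of $\Phi(t)$ with rational slope $\mu$, and by Proposition \ref{prop_positiveshiftispolygon} the slopes of $\Phi(t)$ are among those appearing in the fan $\fan(\Phi(t))$; each side of $\Phi(t)$ lies on a line $\partial H_\lambda(t)$ for some $H_\lambda\in\mathcal{H}_\Phi$. Hence every $K\in\mathcal{H}_{\Phi(t)}$ is of the form $H_\lambda(t)$ for a suitable $H_\lambda\in\mathcal{H}_\Phi$, so $K(s)=H_\lambda(t)(s)=H_\lambda(t+s)$. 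Since $\Phi(t+s)=\bigcap_{H\in\mathcal{H}_\Phi}H(t+s)\subset H_\lambda(t+s)=K(s)$ for each such $K$, intersecting over all $K\in\mathcal{H}_{\Phi(t)}$ gives $\Phi(t+s)\subset(\Phi(t))(s)$.

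For the reverse inclusion $(\Phi(t))(s)\subset\Phi(t+s)$, I would argue directly from the intersection formula: fix an arbitrary $H\in\mathcal{H}_\Phi$ with slope $\lambda$, and show $(\Phi(t))(s)\subset H(t+s)$; intersecting over $H$ then yields the claim. The natural bridge is that $H(t)$ \emph{contains} $\Phi(t)$, so $H(t)$ is a rational-slope half-plane containing $\Phi(t)$, and hence there is a support half-plane $K\in\mathcal{H}_{\Phi(t)}$ of the same slope $\lambda$ with $K\supset\Phi(t)$ and $K\subset H(t)$ (the boundary of $K$ is at least as far out, so $K\subset H(t)$ componentwise in the constant). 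Then $(\Phi(t))(s)\subset K(s)\subset H(t)(s)=H(t+s)$, using that propagation by $s$ is monotone with respect to the inclusion of parallel half-planes. The one point requiring care is the existence of such a support half-plane of slope $\lambda$ for $\Phi(t)$: this needs that $\lambda$ actually appears as a support direction, or at worst that $H(t)\supset\Phi(t)$ can be pushed inward to touch $\Phi(t)$; since $\Phi(t)$ is admissible and $\lambda$ is rational, such a supporting line of slope $\lambda$ exists as long as $\Phi(t)$ does not recede to infinity in that direction, which is exactly the admissibility condition.

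The main obstacle I anticipate is not any single computation but ensuring the correspondence between the two support-half-plane families is tight enough in both directions simultaneously — in particular handling slopes $\lambda\in M$ for which the support line of $\Phi$ touches $\Phi$ but the corresponding line may only touch $\Phi(t)$ \emph{at infinity} (for unbounded $\Phi$). For compact $\Phi$ this is immediate from Corollary \ref{cor_finitenmbrofsides}, so the cleanest route may be to first prove the statement for compact admissible $\Phi$, where both $\Phi(t)$ and $\Phi(t+s)$ are finite polygons with explicitly matched fans by Theorem \ref{fan-description}, and then recover the general admissible case by exhausting $\Phi$ with compact convex pieces or by a direct limiting argument using Lemma \ref{lem_restorephi}.
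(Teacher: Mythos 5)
Your second inclusion (the one you call the ``reverse'' direction, $(\Phi(t))(s)\subset\Phi(t+s)$) is correct, and it is essentially the paper's own argument for that direction: for each $H\in\mathcal{H}_\Phi$ of slope $\lambda$, the support half-plane of $\Phi(t)$ of the same slope is $H(t+\epsilon_\lambda)$ for some $\epsilon_\lambda\ge 0$, and monotonicity of propagation gives $(\Phi(t))(s)\subset H(t+\epsilon_\lambda+s)\subset H(t+s)$.

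The genuine gap is in the inclusion you call ``easier,'' $\Phi(t+s)\subset(\Phi(t))(s)$, which is in fact where the whole difficulty of the theorem sits. Your key claim --- ``every $K\in\mathcal{H}_{\Phi(t)}$ is of the form $H_\lambda(t)$ for a suitable $H_\lambda\in\mathcal{H}_\Phi$'' --- is false. By definition, $\mathcal{H}_{\Phi(t)}$ consists of \emph{all} rational-slope support half-planes of $\Phi(t)$, not only those whose boundaries contain sides of $\Phi(t)$; a support half-plane touching $\Phi(t)$ only at a vertex is in general strictly deeper than $H_\lambda(t)$. Concretely, take $\Phi=[0,3]^2$ and $t=1$, so $\Phi(1)=[1,2]^2$: the support half-plane of $\Phi(1)$ of slope $(1,1)$ is $K=\{x+y\ge 2\}$, while $H_{(1,1)}(1)=\{x+y\ge 1\}$, i.e.\ $\epsilon_{(1,1)}=1>0$. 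For such a $K$ your chain breaks down: you know $\Phi(t+s)\subset H_\lambda(t+s)$, but $K(s)=H_\lambda(t+\epsilon_\lambda+s)$ is \emph{smaller} than $H_\lambda(t+s)$, so the needed inclusion $\Phi(t+s)\subset K(s)$ does not follow from it. (That inclusion is true, but proving it amounts to showing that vertices of the front recede with conormal speed at least $1$ in every rational direction, which is the nontrivial content of the theorem.) The paper closes exactly this gap with a redundancy argument: if $\epsilon_\lambda>0$ then $\lambda$ does not generate a ray of $\fan(\Phi(t))$, and since $\Phi(t)$ is canonical for $t>0$, Theorem \ref{fan-description} shows no new rays appear under further evolution, so $\lambda$ is not a ray of $\fan((\Phi(t))(s))$ either; hence the constraint $K(s)$ is redundant in the intersection defining $(\Phi(t))(s)$, which therefore equals the intersection of the half-planes $H_\lambda(t+s)$ with $\epsilon_\lambda=0$, and that intersection contains $\Phi(t+s)$. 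Note also that your proposed fallback of reducing to compact $\Phi$ does not repair this: the counterexample above is a compact square, so the problem has nothing to do with unbounded domains or support directions ``touching at infinity.''
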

\begin{proof}
By definition, we have $(H_\lambda(t))(s)=H_\lambda(s+t)$ for any half-plane $H_\lambda\in{\mathcal H}_\Phi$ with a rational slope $\lambda\in M$. 
By Definition \ref{def_continousfront}, $\Phi(t+s)$ is obtained as the intersection of $H_\lambda(s+t)$ for all support half-planes $H_\lambda$, where 
$\lambda$ is a primitive vector in $M$. 
Similarly, $(\Phi(t))(s)$ is obtained as the intersection of time $s$ propagation of the half-planes from ${\mathcal H}_{\Phi(t)}$ corresponding to the same primitive vectors $\lambda$.

Since $H_\lambda(t)\supset\Phi(t)$ by \eqref{trop-propagation}, $H_\lambda(t+\epsilon_\lambda)\in{\mathcal H}_{\Phi(t)}$ for some $\epsilon_\lambda\ge 0$.
This implies that $(\Phi(t))(s)\subset\Phi(t+s)$.
If $\epsilon_\lambda=0$ then $H_\lambda(t)\in{\mathcal H}_{\Phi(t)}$, and the corresponding half-planes in the definitions of $\Phi(t+s)$  and $(\Phi(t))(s)$ coincide.

If $\epsilon_\lambda>0$ then $\lambda$ does not generate a ray in $\fan(\Phi(t))$ since by the proof of Proposition \ref{prop_positiveshiftispolygon}, only finitely many half-planes $H_\lambda(t)$, $H_\lambda\in{\mathcal H}_\Phi$, have boundaries intersecting a sufficiently small $\delta$-neighbourhood of $\Phi$ in any bounded domain, $\delta>0$.
By Theorem \ref{fan-description}, then $\lambda$ also does not generate a ray in $\fan((\Phi(t))(s))$.
But $(\Phi(t))(s)$ is obtained as the intersection of all its support half-planes corresponding to the rays of $\fan((\Phi(t))(s))$, and thus taking the intersection with $H_\lambda(t+\epsilon_\lambda+s)$ in the definition of $(\Phi(t))(s)$ is redundant, and $(\Phi(t))(s)=\Phi(t+s)$.
%
%
%
\end{proof}

\begin{proposition}\label{prop_defequivdicrcont}
If $\Phi\subset\nr$ is a lattice polygonal domain and $t\in{\mathbb N}$ then $\Phi[t]=\Phi(t)$.
\end{proposition}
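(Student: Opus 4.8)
The plan is to reduce everything to a single step of the evolution via Huygens's principle, and then to establish $\Phi[1]=\Phi(1)$ for an arbitrary lattice polygonal domain $\Phi$ by identifying both sides with the convex hull of the lattice points of $\operatorname{int}\Phi$. First I would induct on $t$. The case $t=0$ is trivial. For the inductive step I note that $\Phi[t]$ is again a lattice polygonal domain and that $\Phi[t+1]=(\Phi[t])[1]$ directly from Definition \ref{def_discretefront}. Assuming $\Phi(t)=\Phi[t]$, Theorem \ref{prop_additivefront} gives $\Phi(t+1)=(\Phi(t))(1)=(\Phi[t])(1)$, so it remains only to apply the case $t=1$ to the lattice polygonal domain $\Phi[t]$, obtaining $(\Phi[t])(1)=(\Phi[t])[1]=\Phi[t+1]$. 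Thus the entire statement follows from the case $t=1$.

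For $t=1$ the key first observation is that, since $\Phi$ is a lattice polygon, every rational support half-plane $H=\{p:\lambda(p)\ge c\}\in\mathcal{H}_\Phi$ has $c\in\Z$, because the minimum of $\lambda$ over $\Phi$ is attained at a lattice vertex; by Lemma \ref{lem_qhalfplaneshift} this means $H(1)=\{p:\lambda(p)\ge c+1\}$. The inclusion $\Phi[1]\subseteq\Phi(1)$ is then immediate: a lattice point $q\in\operatorname{int}\Phi$ satisfies $\lambda(q)>c$, hence $\lambda(q)\ge c+1$, for every support slope $\lambda$, so $q\in\Phi(1)$; by convexity $\Phi[1]=\operatorname{conv}(N\cap\operatorname{int}\Phi)\subseteq\Phi(1)$.

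For the reverse inclusion I would show that $\Phi(1)$ is itself a lattice polygonal domain, i.e.\ the convex hull of its own lattice points. Since $\Phi(1)\subset\operatorname{int}\Phi$ (by the lemma asserting that positive-time propagation lands in the interior), this yields $\Phi(1)=\operatorname{conv}(N\cap\Phi(1))\subseteq\operatorname{conv}(N\cap\operatorname{int}\Phi)=\Phi[1]$. As the recession cone of $\Phi(1)$ equals the rational recession cone of $\Phi$, the claim reduces to proving that every vertex $v$ of $\Phi(1)$ lies in $N$. Let $C=\langle\lambda_1,\lambda_2\rangle$ be the cone of $\fan(\Phi(1))$ dual to $v$; by Theorem \ref{fan-description} it is an $A_{d-1}$-cone, so in suitable coordinates its primitive slopes are exactly $\lambda_k=(k,1)$ for $k=0,\dots,d$. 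Put $c_k=\min_{p\in\Phi}\lambda_k(p)\in\Z$, finite since $\lambda_k\in C$ is a support direction. The function $c(s)=\min_{p\in\Phi}(sp_1+p_2)$ is an infimum of affine functions of $s$, hence concave, so $c_k$ lies on or above the chord joining $(0,c_0)$ and $(d,c_d)$. On the other hand $v\in\Phi(1)\subseteq H_{\lambda_k}(1)$ gives $\lambda_k(v)\ge c_k+1$, which together with the defining equalities $\lambda_0(v)=c_0+1$ and $\lambda_d(v)=c_d+1$ forces $c_k$ onto or below the same chord. Therefore $c_k$ is exactly affine in $k$; taking $k=1$ gives $(c_d-c_0)/d=c_1-c_0\in\Z$, so both coordinates of $v=(\,(c_d-c_0)/d,\ c_0+1\,)$ are integers.

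I expect this last step — extracting integrality of $v$ from the fact that no intermediate ray subdivides $C$ — to be the main obstacle, and the clean route through it is the concavity of the support function $c(s)$: admissibility of the $A_{d-1}$-cone pins each intermediate support constant $c_k$ exactly onto the chord, and integrality of the $c_k$ then converts the width-$d$ structure of the cone into the divisibility that makes $v$ a lattice point. The remaining points — finiteness of the $c_k$, the recession-cone statement, and the degenerate cases in which some $\Phi[t]$ is lower-dimensional or empty — are routine and I would verify them only briefly.
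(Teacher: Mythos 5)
Your reduction to $t=1$ via Theorem \ref{prop_additivefront} and your proof of the inclusion $\Phi[1]\subseteq\Phi(1)$ coincide with the paper's. For the opposite inclusion you take a genuinely different route: the paper covers $\Phi\setminus\Phi[1]$ by elementary lattice triangles having a side on $\partial\Phi$ and a vertex on $\partial\Phi[1]$, and observes that the propagated support half-plane at that side expels the whole triangle except the vertex; you instead prove that every vertex of $\Phi(1)$ is a lattice point, using the $A_{d-1}$-structure of the dual cone (Theorem \ref{fan-description}) together with concavity of the support function $s\mapsto\min_{p\in\Phi}(sp_1+p_2)$. That vertex-integrality argument is correct and elegant; it implicitly uses (as the paper also does when defining the dual fan) that the edges of $\Phi(1)$ adjacent to $v$ lie on $\partial H_{\lambda_0}(1)$ and $\partial H_{\lambda_d}(1)$ for support half-planes $H_{\lambda_k}$ of $\Phi$, which follows from the proof of Proposition \ref{prop_positiveshiftispolygon}.

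However, the two steps you set aside as routine are genuine gaps. First, the recession cone of a lattice polygonal domain need not be rational, so the reduction of ``$\Phi(1)$ is the convex hull of its lattice points'' to ``all vertices of $\Phi(1)$ lie in $N$'' fails as stated. For example, let $v_0=(0,0)$, $v_{n+1}=v_n+(q_n,p_n)$ with $p_n/q_n$ an increasing sequence of reduced fractions converging to $\sqrt2$, and let $\Phi$ be the convex hull of the vertical lattice rays $\{v_n+(0,k):k\in\Z_{\ge 0}\}$; this $\Phi$ is closed, equals the convex hull of its lattice points, and its recession cone is $\{(a,b):a\ge 0,\ b\ge\sqrt2\,a\}$, which is irrational, and by Proposition \ref{res-preserved} the recession cone of $\Phi(1)$ is the same irrational cone. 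The conclusion survives, but by a different argument: since $\partial\Phi(1)$ is a locally finite broken line with rational-slope edges (Proposition \ref{prop_positiveshiftispolygon}), in the line-free case every point of $\Phi(1)$ is a convex combination of boundary points, each lying on a bounded edge joining two lattice vertices or on an unbounded rational-slope edge emanating from a lattice vertex, hence in $\operatorname{conv}(N\cap\Phi(1))$; strips need a separate (easy) check since they have no vertices at all. Second, and more seriously, Theorem \ref{fan-description} applies only when $\Phi(1)$ has non-empty interior, so your argument is silent exactly in the degenerate case where $\Phi(1)$ is non-empty but is a point or a segment. This case is not routine: it contains, for instance, the assertion that a lattice polygon with no lattice points in its interior satisfies $\Phi(1)=\emptyset$, and that a one-point $\Phi(1)$ is itself an interior lattice point of $\Phi$. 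This is precisely where the paper's elementary-triangle argument (or, alternatively, a lattice-width/flatness type theorem) does real work, and your proposal needs to be supplemented by such an argument to be complete.
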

\begin{proof}
By induction, using the Huygens principle, it suffices to prove the proposition for $t=1$.
Note that for any lattice half-plane $H$, the complement $H\setminus H(1)$ does not have any lattice points in its interior.
Thus $\Phi[1]\subset\Phi(1)$.

For the opposite inclusion, note that whenever $\Phi[1]\neq\emptyset$, any point $p\in \Phi\setminus\Phi[1]$ is a point of a (closed) lattice triangle $T$ contained in the closure of $\Phi\setminus\Phi[1]$. The triangle $T$ may be chosen so that one of its vertices $v$ belongs to $\dd\Phi[1]$, while the side $E$ opposite to $v$ belongs to $\dd\Phi$.
By subdividing this triangle further, if needed, we may assume that $E$ does not contain lattice points other than its endpoints.
Note that $T\setminus\{v\}$ is contained in $\Phi\setminus\Phi[1]$, and thus cannot have lattice points in its interior.
Thus $T$ is an elementary lattice triangle (of area $\frac12$). Therefore, we have $T\cap H(1)=\{v\}$ for the support  half-plane $H$ whose boundary contains $E$. Thus $p\notin\Phi(1)$.

Finally, suppose that $\Phi[1]$ is empty. Then, similarily, any point $p\in\Phi$ is contained in an elementary lattice triangle $T$ with vertices in $\Phi$, and with a side $E$ contained in $\dd\Phi$. By the same reason, $p\notin\Phi(1)$.
\end{proof}

\begin{definition}\label{def_criticaltimes}
The {\em final time} $t_\Phi\in[0,\infty]$ of an admissible domain $\Phi$ is the minimal value $t$ such that $\Phi(t)$ has an empty interior. The {\em critical} time $t\in(0,t_\Phi)$ is a value $t$ such that for any small $\varepsilon>0$,
the dual fans of $\Phi(t-\varepsilon)$ and $\Phi(t+\varepsilon)$ are different.
\end{definition}

\section{Evolution of corresponding toric surfaces}
Polygonal domains with rational slopes can be interpreted as moment domains of symplectic toric surfaces. 
Namely, a polygonal domain $\Phi(t)$, $t>0$, determines a pair $(S(t),\omega(t)),$ consisting of a toric surface
$S(t)$ with a closed non-degenerate 2-form $\omega(t)$ (called the {\em symplectic form}), cf. \cite{CdS} and \cite{GKZ}.
The toric surface $S(t)$ is a complex surface determined by $\fan(\Phi(t))$, and might be singular.
However, by Theorem \ref{fan-description}, the only possible singular points of $S(t)$ are $A_n$-singularities, $n\in\N$.
The 2-form $\omega$ is defined in the complement of the singular point, making $S(t)$ a singular symplectic space, see \cite{GM}.

In the evolution of the symplectic toric surface $(S(t),\omega(t))$ both $S(t)$ and $\omega(t)$ evolve,
but $S(t)$ only changes at the critical times $t$ (in the sense of Definition \ref{def_criticaltimes}).

\begin{proposition}\label{thm_wavefrontsingularities}
For any admissible $\Phi$ and $t\in(0,t_\Phi)$ the toric surface $S(t)$ may have only $A_n$-type singularities. 
For a critical time $t$ and small $\varepsilon>0$, the transformation from $S(t-\varepsilon)$ to $S(t)$ is a collection of contractions (blowing down) of isolated boundary divisors $D_j\subset S(t-\varepsilon)$.
Each exceptional divisor $D_j$ contains at most one singular point of $S(t-\varepsilon)$.
The point resulting in contraction of $D_j$ is a non-singular point of $S(t)$.
\end{proposition}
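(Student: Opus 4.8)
The plan is to translate everything into the language of the dual fans supplied by Theorem \ref{fan-description} and then read off the birational geometry of the surfaces $S(t)$ from the combinatorics of how $\fan(\Phi(t))$ changes. The first assertion is immediate: torus-fixed points of $S(t)$ correspond to the two-dimensional cones of $\fan(\Phi(t))$, and by Theorem \ref{fan-description} every such cone is an $A_n$-cone, whose associated cyclic quotient singularity is exactly of type $A_n$ (smooth when $n=0$). For the transition across a critical time $t$ (in the sense of Definition \ref{def_criticaltimes}) I would use the monotonicity $\fan(\Phi(t))\subset\fan(\Phi(t-\varepsilon))$ from the same theorem: passing from $t-\varepsilon$ to $t$ only deletes rays, and deleting a ray $\mu$ from a fan---merging its two adjacent cones---is precisely a toric contraction of the boundary divisor $D_\mu$. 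So $S(t-\varepsilon)\to S(t)$ is a composition of such contractions, and the whole content is to analyse one deleted ray at a time.

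The core local computation concerns a single deleted ray $\mu$ whose fan-neighbours $\lambda_1,\lambda_2$ survive to time $t$. By Theorem \ref{fan-description} the merged cone $\langle\lambda_1,\lambda_2\rangle$ is again an $A_{d-1}$-cone, so after an $\operatorname{SL}_2(\Z)$-normalisation I may take $\lambda_1=(0,1)$, $\lambda_2=(d,1)$ with height functional reading the second coordinate. Because the side dual to $\mu$ collapses as $t$ increases, $\mu=(m,n)$ lies beyond the segment $[\lambda_1,\lambda_2]$, i.e. $n\ge 2$ (the value $n=1$ is exactly the affinely-collinear case, in which the side keeps constant length and does not collapse). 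The height-one condition on the two flanking cones $\langle\lambda_1,\mu\rangle$ and $\langle\mu,\lambda_2\rangle$ produces the divisibility relations $m\mid(n-1)$ and $(dn-m)\mid(n-1)$; hence $m\le n-1$ and $dn-m\le n-1$, so $dn\le 2(n-1)<2n$ and therefore $d=1$. Thus the merged cone is smooth and the image point of the contraction is non-singular. The same two relations, run with $d=1$, show that the determinants $m$ and $n-m$ of the two fixed points on $D_\mu$ cannot both exceed $1$ (both $\ge 2$ would force $n=m+1$), which gives the ``at most one singular point'' statement.

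The step I expect to be the main obstacle is proving that the deleted rays are isolated, i.e. that no two cyclically adjacent rays collapse at the same critical time; this is what lets me treat the contractions one ray at a time and guarantees that the $D_j$ are disjoint. I would argue by contradiction: suppose adjacent rays $\mu_1,\mu_2$, with outer neighbours $\lambda_1,\lambda_2$, both disappear at $t$. Since $\langle\mu_1,\mu_2\rangle$ is an $A_n$-cone there is a primitive $\xi\in N$ with $\xi(\mu_1)=\xi(\mu_2)=1$. A short velocity computation for the two endpoints of each collapsing side shows that the side dual to $\mu_i$ shrinks to zero (rather than staying constant or growing) precisely when $\xi(\lambda_i)<1$; as $\xi(\lambda_i)\in\Z$ this forces $\xi(\lambda_1),\xi(\lambda_2)\le 0$. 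On the other hand, at the critical time the two surviving sides meet at a genuine convex vertex of $\Phi(t)$, so the normal cone $\langle\lambda_1,\lambda_2\rangle$ is strictly convex and contains $\mu_1,\mu_2$ as non-negative combinations of $\lambda_1,\lambda_2$; then $\xi(\mu_i)\le 0$, contradicting $\xi(\mu_i)=1$. Hence adjacent rays cannot collapse simultaneously.

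Assembling these pieces, $S(t-\varepsilon)\to S(t)$ is a collection of contractions of pairwise disjoint boundary divisors, each containing at most one singular point and each contracting to a smooth point of $S(t)$, as claimed. The only delicate points to write out carefully are the sign and positivity bookkeeping in the collapse criterion (making sure that ``shrinks as $t$ increases'' is the inequality $\xi(\lambda_i)<1$ and not its reverse, which is where the exterior-triangle dichotomy from the proof of Theorem \ref{fan-description} re-enters) and the standard toric fact that deleting a ray whose merged cone is smooth is a genuine blow-down to a smooth fixed point.
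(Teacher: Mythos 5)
Your route is genuinely different from the paper's. The paper proves everything after the first sentence as a corollary of Theorem \ref{thm_nonfinal}, whose proof dualizes a collision vertex and invokes Lemma \ref{trop-trig}: a convex-type subdivision of a rational cone into canonical cones has exactly two pieces, one of which is of $A_0$-type. You instead combine the fan monotonicity of Theorem \ref{fan-description} with two computations of your own. The local divisibility argument is correct --- with the paper's convention that fan rays are inward conormals, a ray deleted in forward time is indeed in the exterior-triangle case, so $n\ge 2$; height one of the two flanking cones is exactly $m\mid(n-1)$ and $(dn-m)\mid(n-1)$; these force $d=1$, and they are incompatible with $m\ge 2$ and $n-m\ge 2$ holding simultaneously --- so it amounts to an independent re-proof of the two-piece case of Lemma \ref{trop-trig}. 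Your velocity criterion (the side dual to $\mu_1$ shrinks iff $\xi(\lambda_1)<1$, where $\xi(\mu_1)=\xi(\mu_2)=1$) is also correct, including the sign.

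The genuine gap is in the isolation step, which you yourself flag as the main obstacle. Your contradiction rules out exactly the configuration in which two adjacent rays collapse \emph{and both of their outer neighbours survive}. It does not rule out three or more consecutive rays $\mu_1,\dots,\mu_r$, $r\ge 3$, collapsing at the same critical time: in that event every adjacent pair of deleted rays has at least one deleted outer neighbour, so the step ``at the critical time the two surviving sides meet at a genuine convex vertex of $\Phi(t)$'' is unavailable for every pair, and the argument never starts. What you actually prove is that no maximal chain of simultaneously deleted rays has length exactly two, which does not imply that deleted rays are isolated. This many-particle case is precisely what the paper's Lemma \ref{trop-trig} covers by its separate $l>2$ argument, so it is a substantive case, not a formality.

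The gap is repairable inside your own framework. Take a maximal chain $\mu_1,\dots,\mu_r$ of deleted rays, flanked by surviving rays $\lambda_1,\lambda_2$. Since $t<t_\Phi$, these are not antiparallel, so the normal cone $\langle\lambda_1,\lambda_2\rangle$ of $\Phi(t)$ at the collision vertex is strictly convex, and it contains every $\mu_i$ because all the shifted support lines pass through that vertex. Now run your velocity computation on the end pair $(\mu_1,\mu_2)$ only: with $\xi(\mu_1)=\xi(\mu_2)=1$, the side dual to $\mu_1$ shrinks iff $\xi(\lambda_1)\le 0$, and the side dual to $\mu_2$ shrinks iff $\xi(\mu_3)\le 0$, where $\mu_3$ is its other neighbour --- the criterion is insensitive to whether $\mu_3$ survives, because the endpoint shared with the $\mu_1$-side already moves parallel to the kernel of $\xi$. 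Since $\lambda_1,\mu_1,\mu_2,\mu_3$ lie in this cyclic order inside the strictly convex cone $\langle\lambda_1,\lambda_2\rangle$, the rays $\mu_1,\mu_2$ are non-negative combinations of $\lambda_1$ and $\mu_3$, giving $\xi(\mu_1)\le 0$ and contradicting $\xi(\mu_1)=1$. With this adjustment (and otherwise as written) your proof goes through.
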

The first statement in this proposition is a corollary of Theorem \ref{fan-description}. The rest will be proved in Theorem \ref{thm_nonfinal}.

If $0<t<t_\Phi$ is a non-critical time, and $\varepsilon>0$ is small then we have a natural identification between the complex surfaces $S(t\pm\varepsilon)$ and $S(t)$. Thus we may identify the cohomology groups $H^2(S(t\pm\varepsilon))$ and $H^2(S(t))$.
Recall that the canonical class $K_S\in H^2(S;\Z)$ is well-defined whenever $S$ is a surface with $A_n$-singularities
(or also $D_n$ and $E_6$, $E_7$ and $E_8$ singularities which do not appear for $S(t)$).
It might happen that $\Phi(t)$ is unbounded, or even a polygonal domain with infinite number of sides, so that $S(t)$ is non-compact.
The cohomology classes $K_{S(t)}$ and $[\omega(t)]$ belong to the same vector space $H^2(S(t);\R)$ that might be finite or infinite-dimensional, according to the number of sides of $\Phi(t)$.
\begin{theorem}[\cite{Sh-diss}] \label{thm-candom}
Let $0<t<t_\Phi$ be a non-critical time.
The evolution of the cohomology class $[\omega(t)]\in H^2(S(t);\R)$ of the symplectic form $\omega$ is 
described by the differential equation
\begin{equation}\label{eq_canonicalevolution}{d\over dt}[\omega(t)]=2\pi\ K_{S(t)}.\end{equation} 
\end{theorem}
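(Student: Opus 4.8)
The plan is to compute both sides of \eqref{eq_canonicalevolution} in terms of the extended fan data of $\Phi(t)$ and to match them edge by edge. Recall that a polygonal domain $\Phi(t)$ with rational slopes is the moment domain of a symplectic toric surface $(S(t),\omega(t))$, and that the boundary divisors $D_\lambda\subset S(t)$ are in bijection with the rays of $\fan(\Phi(t))$, i.e. with the sides of $\Phi(t)$ of slope $\lambda\in M$. The class $[\omega(t)]$ is the standard toric symplectic class, whose pairing against a boundary divisor $D_\lambda$ is (up to the usual $2\pi$ normalization) the tropical length of the corresponding side, while its pairing against a compact curve class recovers the corresponding facet displacement $c_\lambda+t$. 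The first thing I would do is fix, on a non-critical interval $(t-\varepsilon,t+\varepsilon)$, the canonical identification of $H^2(S(t');\R)$ with $H^2(S(t);\R)$ coming from the constancy of $\fan(\Phi(t'))$ there (Theorem \ref{fan-description}, Definition \ref{def_criticaltimes}); this makes ${d\over dt}[\omega(t)]$ literally the derivative of a family of classes in one fixed vector space, so that the equation becomes an identity of cohomology classes to be checked on generators.

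Next I would express both sides against the natural generating set. Since $S(t)$ is a (possibly singular, possibly non-compact) toric surface with only $A_n$-singularities, Poincaré duality over $\Q$ (Remark \ref{angle-sintersect}) lets me test a class in $H^2(S(t);\R)$ by its intersection numbers with the boundary divisors $D_\lambda$ and, where needed, by its values on the $H_2$ generated by these same toric curves. For $[\omega(t)]$ the key computation is that the symplectic area swept as $t$ increases is governed exactly by the half-plane propagation \eqref{eq_halfplaneshift}: the facet at slope $\lambda$ is displaced by $t/\lVert\lambda\rVert$, equivalently the supporting value $c_\lambda$ increases at unit rate in the primitive-$\lambda$ normalization. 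I would therefore show that for each ray $\lambda$ of the (fixed) fan, the derivative $\tfrac{d}{dt}\langle[\omega(t)],\cdot\rangle$ equals $2\pi$ times the intersection pairing of $K_{S(t)}$ with the same test class.

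The heart of the matter is the toric formula for the canonical class: for a toric surface, $K_S=-\sum_\lambda D_\lambda$, and the self-intersection $D_\lambda^2$ is precisely the double tropical angle $\tr(R_-,R_0,R_+)$ at the corresponding ray (Remark \ref{angle-sintersect}). So the right-hand side $2\pi K_{S(t)}$ is a combination of the boundary divisors with universal coefficients, independent of $t$ on the non-critical interval. The computation I must carry out is that the rate of change of $[\omega(t)]$, which records how each facet moves and hence how the edge lengths of $\Phi(t)$ change, reproduces exactly $-2\pi\sum_\lambda D_\lambda$. Concretely, differentiating the edge-length functions of $\Phi(t)$: as every supporting half-plane moves inward at tropical unit speed, an edge of slope $\lambda$ loses length at a rate determined by the two adjacent edge slopes, and this rate is controlled by the lattice determinants at the two endpoints, i.e. by the very data entering $D_\lambda^2$ and the adjacent intersection numbers. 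Matching these two linear-in-the-fan expressions is the crux.

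The main obstacle I anticipate is twofold. First, the bookkeeping of normalization constants: getting the factor $2\pi$ and the signs right requires pinning down the precise convention relating the symplectic class to the moment-polytope displacements and to the primitive-slope tropical length, and then checking it is consistent with the toric canonical-class formula $K_S=-\sum D_\lambda$. Second, and more seriously, is handling the singular and non-compact cases uniformly: one must justify that the intersection-theoretic identity holds in $H^2(S(t);\R)$ even when $S(t)$ has $A_n$-singularities (so that one works with $\Q$-Poincaré duality, as flagged in the footnote to Remark \ref{angle-sintersect}) and even when $\Phi(t)$ has infinitely many sides, so that $H^2$ is infinite-dimensional and the identity must be verified ray-by-ray rather than by a global finite-dimensional argument. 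I expect the resolution to be that the differential equation is \emph{local} at each boundary divisor, so that the identity reduces to the single-edge computation above applied independently to each ray, which sidesteps both the compactness and the dimension issues.
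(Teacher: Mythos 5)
Your overall frame coincides with the paper's: on a non-critical interval the dual fan is constant (Theorem \ref{fan-description}), so $H^2(S(t);\R)$ is canonically identified along the interval, and the identity \eqref{eq_canonicalevolution} is to be verified by pairing both sides with classes in $H_2(S(t);\R)$, using that each supporting value $c_\lambda(t)$ grows at unit rate and that $K_{S(t)}$ is minus the sum of the boundary divisors. The divergence --- and the gap --- lies in your choice of test classes.

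You test against the fundamental classes of the boundary divisors $D_\lambda$, so that $\langle[\omega(t)],[D_\lambda]\rangle=2\pi\,l(E_\lambda(t))$ and your crux becomes the length-gradient identity $l'(E_\lambda(t))=K_{S(t)}\cdot[D_\lambda]$. That identity is true, but the paper proves it only later, as a separate pair of statements (Proposition \ref{prop-lprime} and the proposition following it, via bissectrices and double tropical angles), so your route forces genuinely harder lattice geometry than the theorem needs. More seriously, the compact boundary divisor classes need not span $H_2(S(t);\R)$ for a general admissible $\Phi$, and the theorem covers unbounded domains. Take $\Phi$ to be a strip of rational slope: every $t\in(0,t_\Phi)$ is non-critical, $S(t)\cong\mathbb{P}^1\times\mathbb{C}^*$ has no compact boundary divisors whatsoever, yet $H_2(S(t);\R)\cong\R$ is generated by the fiber class, i.e.\ the relation class $e_{\lambda}+e_{-\lambda}$, and the identity has content there: the fiber has area $2\pi$ times the width of the strip, which decreases at rate $2$, matching $2\pi K_{S(t)}\cdot f=-4\pi$. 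Your closing ``ray-by-ray'' locality remark does not repair this, because the obstruction is not the number of rays but the existence of homology classes that are not combinations of boundary curves. The paper's proof dissolves both of your anticipated obstacles at once by testing against \emph{all} relation classes $\sum_j a_je_{\lambda_j}$ with $\sum_j a_j\lambda_j=0$; these describe $H_2(S(t);\R)$ exactly, also in the singular and non-compact cases, and each involves only finitely many rays. For such a class the $\omega(t)$-area is $2\pi\sum_j a_jc_j(t)$, where $\Phi(t)$ is cut out by the half-planes $\{\lambda_j\ge c_j(t)\}$, so its $t$-derivative is immediate with no geometry at all, while $K_{S(t)}=-\sum_\lambda D_\lambda$ pairs with it to give $-\sum_j a_j$ by the standard toric pairing $D_{\lambda_i}\cdot\bigl(\sum_j a_je_{\lambda_j}\bigr)=a_i$. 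If you replace your test set by the relation classes, your argument becomes the paper's; if you keep the divisor classes, you must both prove the length-gradient formula and restrict to domains whose boundary curves span, e.g.\ compact ones.
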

\begin{proof}
Recall that the second homology group with real coefficients $H_2(S(t);\R)$ of the toric surface $S(t)$ can be expressed in terms of the dual fan $\fan(S(t))$. Namely, 
$H_2(S(t);\R)$ is  the 
subspace of the vector space generated by the elements $e_{\lambda_j}$ corresponding to the
primitive vectors $\lambda_j\in M$ in the directions of the rays of $\fan(S(t))$ as finite linear combinations $\sum\limits_{j} a_je_{\lambda_j}$,
subject to the condition
$$\sum\limits_{j} a_j\lambda_j=0\in M.$$
Furthermore, the $\omega(t)$-area of the class
$\sum\limits_{j} a_je_{\lambda_j}\in H_2(S(t);\R)$ is $$2\pi\sum\limits_{j=1}^k a_jc_j(t),$$ if the polygon $\Phi(t)$ is the intersection of half-planes $\{\lambda_j(p)\ge c_j(t)\}$ in $\nr$.
Thus $$\frac{d}{dt}\omega(t)(\sum\limits_{j} a_je_{\lambda_j})
=-2\pi\sum\limits_{j=1}^ka_j.$$
In the same time, the canonical class $K_{\Phi(t)}$ is represented by the boundary divisors of $\Phi(t)$ taken with the negative sign. Therefore, $$K(\sum\limits_{j} a_je_{\lambda_j})=-\sum\limits_{j=1}^ka_j,$$
which implies \eqref{eq_canonicalevolution}.
\end{proof}

\ \\


\noindent{\bf\Large Part III. Tropical caustics.}
\section{The caustic of an admissible domain}\label{sec_caustic}

A tropical caustic of an admissible domain $\Phi\subset\nr$ is the locus of special points $p$ of its tropical wave fronts $\Phi(t)$ at all times $t\ge 0$. A point $p\in\dd\Phi(t)$, $t> 0$, is said to be special if there 
are two distinct support half-planes $H_\lambda$, $H_\mu$ of $\Phi$ such that $p\in \dd H_\lambda(t)\cap\dd H_\mu(t)$. 
There are two cases. 
If $p$ is special and $p\in\dd\Phi(t)$ then $p$ is necessarily a vertex of the polygonal domain $\Phi(t)$. 
On the other hand, all points of $\dd\Phi(t_\Phi)=\Phi(t_\Phi)$ are special.

\newcommand{\kf}{\mathcal{K}_\Phi}
\begin{definition}\label{def_caustic}
Let $\Phi$ be an admissible domain. Its tropical caustic $\kf\subset\Phi^\circ$ is the locus of all vertices of polygons $\Phi(t)$ for all $t\in(0,t_\Phi)$ together with the degenerate polygon $\Phi_{t_\Phi}$ if $t_\Phi<\infty.$
\end{definition}

We introduce the {\em weights} to the points of $\kf$ as follows.
By Theorem \ref{fan-description}, a vertex of the polygon $\Phi(t)$, $t\in(0,t_\Phi)$, is of $A_n$-type. We prescribe to such a vertex the {weight} equal to $n+1$.
Then we prescribe the weight 2 to all points of $\Phi_{t_\Phi}$.

By a tropical analytic curve $\Gamma$ in an open set $\Omega\subset\nr$ we mean a graph in $\Omega$, i.e. a possibly infinite, but locally finite union of edges $E$. Each edge $E$ is a straight interval with a rational slope, that might include or not its endpoints (or be infinite) but must be relatively closed in $\Omega$. All points inside the same edge must have the same weight. Furthermore, this graph has to be balanced at its every vertex, in the sense that the sum of the outward primitive vectors parallel to all adjacent vertices multiplied by their weight is zero, see \cite{ItMiSh}. 


\begin{theorem}[\cite{Sh-diss}]\label{caustic-trop-curve}
The caustic $\kf$ is a tropical analytic curve in the interior $\Phi^\circ$ of $\Phi$. 
\end{theorem}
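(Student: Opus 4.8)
The plan is to realise $\kf$ as the union of the trajectories swept by the vertices of the wave fronts $\Phi(t)$, $t\in(0,t_\Phi)$, together with the final locus, and then to verify the three defining properties of a tropical analytic curve in $\Omega=\Phi^\circ$: that it is a locally finite union of rational-slope segments, that the weight is constant along each such segment, and that the balancing condition holds at every vertex.

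First I would describe the edges. Fix a basis of $M$, giving coordinates on $\mr$ and $\nr$, and let $J$ be the quarter turn $J(u,v)=(v,-u)$, so that $J\in\operatorname{GL}_2(\Z)$. Between two consecutive critical times the combinatorial type of $\Phi(t)$ is constant, and a vertex $v_{ij}(t)=\dd H_{\lambda_i}(t)\cap\dd H_{\lambda_j}(t)$ solves $\lambda_i(v)=c_i+t$, $\lambda_j(v)=c_j+t$. Differentiating, $\lambda_i(\dot v_{ij})=\lambda_j(\dot v_{ij})=1$, whence $\dot v_{ij}=\tfrac{1}{\lambda_i\wedge\lambda_j}\,J(\lambda_j-\lambda_i)$; in particular each trajectory is a segment of rational slope. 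By Theorem \ref{fan-description} the cone generated by $\lambda_i,\lambda_j$ is an $A_n$-cone, for which the width $g_{ij}$ (the tropical length of $\lambda_j-\lambda_i$) equals the determinant $w_{ij}=|\lambda_i\wedge\lambda_j|=n+1$. Taking consecutive sides counterclockwise so that $\lambda_i\wedge\lambda_j=w_{ij}>0$, this makes $\dot v_{ij}=J\!\big(\tfrac{\lambda_j-\lambda_i}{g_{ij}}\big)$ a \emph{primitive} integer vector $p_{ij}$, and the weight $n+1$ prescribed to the trajectory obeys the key identity
\begin{equation}\label{key-id}
w_{ij}\,p_{ij}=J\lambda_j-J\lambda_i .
\end{equation}
Thus every edge of $\kf$ is a rational segment of constant weight, relatively closed in $\Phi^\circ$ (an end escaping to $\dd\Phi$ as $t\to0^+$ simply lies outside $\Omega$).

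Next I would establish local finiteness and the graph structure. By Theorem \ref{fan-description} the fan $\fan(\Phi(t))$ only loses rays as $t$ grows, so no new side is ever created and every critical time is a collapse in which one or more consecutive sides shrink to a point while the flanking sides become adjacent. For a compact $K\subset\Phi^\circ$ put $t_K=\min_{p\in K}\sup\{t:p\in\Phi(t)\}>0$; for $t<t_K$ the front $\dd\Phi(t)$ misses $K$, while for $t\ge t_K$ the proof of Proposition \ref{prop_positiveshiftispolygon} shows that only finitely many slopes are relevant in a bounded neighbourhood of $K$, and by the monotonicity of the fan this finite list does not grow. Hence only finitely many vertices $v_{ij}$, and so finitely many trajectory segments, meet $K$; the union is locally finite, its vertices being the discrete collapse instants together with the points of the final locus.

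The crux is the balancing. At an interior collapse at $t_0$ the sides $\lambda_1,\dots,\lambda_k$ are consecutive counterclockwise, the inner ones collapse, and $v_{1k}$ emerges for $t>t_0$; the incoming trajectories $v_{i,i+1}$ leave the collapse point in the directions $-p_{i,i+1}$ and the outgoing $v_{1k}$ in the direction $+p_{1k}$, so by \eqref{key-id} their weighted outward vectors telescope,
\begin{equation*}
\sum_{i=1}^{k-1}\big(J\lambda_i-J\lambda_{i+1}\big)+\big(J\lambda_k-J\lambda_1\big)=0 .
\end{equation*}
Run over the full boundary cycle, the same telescoping gives balancing when the final locus is a point (all trajectories converge there, with no outgoing edge). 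When the final locus is an interval, ray or line $I$ of primitive direction $\eta$ (rational by Proposition \ref{finallocusclass}), its two flanking long sides have antiparallel inner normals $\lambda,-\lambda$; telescoping the trajectories landing at a finite endpoint $P$ cancels the short-side contributions and leaves the combined weighted outward vector $J\lambda-J(-\lambda)=2J\lambda=\mp2\eta$, so balance forces the outgoing segment $I$ to carry weight exactly $2$ — precisely the weight prescribed to the final locus. This last step is the one I expect to be the main obstacle: unlike the interior case, which is automatic from \eqref{key-id}, it requires matching the collapse geometry at the ends of $I$ to the antiparallel pair $\lambda,-\lambda$ and checking each case of Proposition \ref{finallocusclass} (point, interval, ray, line). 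Assembling the three verifications shows that $\kf$ is a tropical analytic curve in $\Phi^\circ$.
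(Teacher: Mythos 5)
Your proof is correct, but it follows a genuinely different route from the paper's. The paper deduces the theorem from Proposition \ref{prop_cornerlocus}: the caustic $\mathcal{K}_\Phi$ is identified with the corner locus of the tropical series $\mathcal{F}_\Phi(p)=\inf_\lambda(a^\lambda_\Phi-\lambda(p))$ of Definition \ref{def_series}, and the only thing verified by hand is that near a point of $\dd\Phi(t)$ only finitely many terms of the series attain the value $t$, the attaining term being unique precisely at the non-vertex points; local finiteness, rationality of slopes, the weights and, crucially, the balancing condition are then inherited wholesale from the general theory of corner loci of tropical series \cite{ItMiSh}. You instead verify the definition of a tropical analytic curve directly: the velocity formula $\dot v_{ij}=\tfrac{1}{\lambda_i\wedge\lambda_j}J(\lambda_j-\lambda_i)$ for the vertex trajectories, the identity $w_{ij}\,p_{ij}=J\lambda_j-J\lambda_i$ (valid because width equals determinant for $A_n$-cones, by Theorem \ref{fan-description}), and the telescoping of these momenta at interior collisions and at the endpoints of the final locus. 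Your approach is more elementary and self-contained, it makes the conservation-of-momentum picture explicit before the paper introduces it (Proposition \ref{part-interpr}, Theorem \ref{thm_nonfinal} --- which your argument correctly does not rely on, since they come later and for arbitrary numbers of colliding particles), and it exhibits the weight $2$ of the final locus as forced by balancing rather than decreed. The paper's approach is shorter and uniform: no case analysis over interior versus final collisions, point versus interval versus ray versus line, or smooth boundaries with branch points accumulating on $\dd\Phi$, and it yields the stronger structural fact that $\mathcal{K}_\Phi$ is globally the corner locus of a single tropical series, which the paper exploits afterwards to define the edge weights. Two points you should still record to make the direct verification airtight: (i) two trajectories can only meet at a common time, since a point $p$ lies on $\dd\Phi(t)$ only for $t=\mathcal{F}_\Phi(p)$ (Lemma \ref{lem_levelsets}), so every meeting point of edges is a genuine collision and your vertex set is well defined; and (ii) no trajectory lands in the relative interior of the final locus $I$ --- otherwise balancing would fail there --- which holds because the time $t_\Phi$ evolution $H(t_\Phi)$ of a support half-plane can meet the relative interior of $I$ only if $\dd H$ is parallel to $I$ (this is exactly the opening step of the paper's Proposition \ref{prop-edge}).
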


A particular way to see that, is to realize $\kf$ as a corner locus of a tropical series  $\mathcal{F}_\Phi\colon\Phi\rightarrow [0,\infty)$ that we now define. Recall that a tropical series is the (possibly infinite) expression
\begin{equation}\label{tropicalseries}
F(x,y)=``\sum\limits_{j,k\in\Z} a_{j,k}x^jy^k"=\sup_{j,k\in\Z} (a_{j,k}+jx+ky),
\end{equation}
where the quotation marks mean that the summation operation refers to taking the supremal value while the multiplication refers to taking the sum.
If locally only finitely many of $(j,k)\in \Z^2$ correspond to maximizing terms in \eqref{tropicalseries} then the {\em corner locus} $\Gamma_F$ of the resulting function is locally finite balanced graph, and thus a tropical curve. The weight of a point inside an edge of $\Gamma_F$ comes from the integer length of the interval spun by the indices $(j,k)$ of the maximizing monomial terms, see \cite{ItMiSh}.  

For a primitive (non-divisible) element $\lambda\in M$ we define 
$$a^\lambda_\Phi=\sup\{ \lambda(p)\ |\ p\in\Phi\}.$$
Note that the half-plane $H=\{\lambda\le a^\lambda_\Phi\}$ is the support half-plane to $\Phi$.
For a point $p\in \Phi$ the time $(a^\lambda_\Phi-\lambda(p))$ evolution of the half-plane $H$ is the half-plane $H_{a^\lambda_\Phi-\lambda(p)}=\{\lambda\le \lambda(p)\}$, and we have $\dd H_{a^\lambda_\Phi-\lambda(p)}\ni p$.
In other words, $a^\lambda_\Phi-\lambda(p)$ can be considered as the {\em tropical distance} to $\dd H$ that can also be interpreted as the time when the evolution of $H$ reaches $p$.

\begin{definition}\label{def_series}
Define $\mathcal{F}_\Phi\colon\Phi\rightarrow[0,\infty)$ as \begin{equation}\label{def_theseries}\mathcal{F}_\Phi(p)=\inf_{\lambda}(a^\lambda_\Phi-\lambda(p))=
-\sup_\lambda(\lambda(p)-a_\Phi^\lambda),\end{equation}
where the infimum (and supremum) is taken over all primitive elements $\lambda\in M$.
\end{definition}

Note that the right-hand side of \eqref{def_theseries} is the tropical series of the form \eqref{tropicalseries}.
The value $\mathcal{F}_\Phi(p)$ can be interpreted as the time when the tropical front of $\dd\Phi$ reaches the point $p$.

\begin{remark}\label{rem_admissibleforseries}
Admissibility of $\Phi$ is equivalent to existence 
of primitive elements $\lambda\in M$ such that $a^\lambda_\Phi<+\infty$.
\end{remark}

The following lemma is straightforward from the definition.
\begin{lemma}\label{lem_levelsets}
$\mathcal{F}_\Phi^{-1}(t)=\partial\Phi(t).$
\end{lemma}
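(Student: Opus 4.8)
The plan is to reduce the entire statement to the single set-identity
$$\Phi(t)=\{p\in\Phi : \mathcal{F}_\Phi(p)\ge t\},$$
after which $\partial\Phi(t)$ can be read off as the corresponding level set. First I would establish this identity directly from the definitions. By Definition \ref{def_continousfront} and the discussion preceding Definition \ref{def_series}, each rational-slope support half-plane of $\Phi$ is $\{\lambda(p)\le a^\lambda_\Phi\}$ for a unique primitive $\lambda\in M$ with $a^\lambda_\Phi<\infty$, and its time-$t$ propagation is $\{\lambda(p)\le a^\lambda_\Phi-t\}$. Admissibility guarantees such $\lambda$ exist (Remark \ref{rem_admissibleforseries}), and the directions with $a^\lambda_\Phi=+\infty$ contribute a term $+\infty$ to the infimum and so may be ignored. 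Thus $p\in\Phi(t)$ holds iff $\lambda(p)\le a^\lambda_\Phi-t$ for every primitive $\lambda$, i.e. iff $a^\lambda_\Phi-\lambda(p)\ge t$ for all such $\lambda$, which is exactly $\mathcal{F}_\Phi(p)\ge t$. This proves the identity, and in particular exhibits $\Phi(t)$ as the super-level set of a concave function.

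Next I would prove the inclusion $\partial\Phi(t)\subseteq\mathcal{F}_\Phi^{-1}(t)$ for $t>0$. Since $\Phi(t)$ is closed, any $p\in\partial\Phi(t)$ satisfies $\mathcal{F}_\Phi(p)\ge t$, while every neighbourhood of $p$ meets the complement of $\Phi(t)$, producing points $q$ with $\mathcal{F}_\Phi(q)<t$. As $\mathcal{F}_\Phi$ is an infimum of affine functions it is concave, hence continuous on the interior of $\Phi$; and by the earlier lemma that $\Phi(t)$ lies in the interior of $\Phi$, the point $p$ is interior, so $\mathcal{F}_\Phi$ is continuous there. Continuity then forces $\mathcal{F}_\Phi(p)\le t$, whence $\mathcal{F}_\Phi(p)=t$.

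For the reverse inclusion $\mathcal{F}_\Phi^{-1}(t)\subseteq\partial\Phi(t)$ I would show that a level point cannot be interior to $\Phi(t)$. Suppose $\mathcal{F}_\Phi(p)=t$ and $p\in\Phi(t)^\circ$. By Proposition \ref{prop_positiveshiftispolygon} the domain $\Phi(t)$ is polygonal, so near $p$ the function $\mathcal{F}_\Phi$ is the minimum of finitely many affine functions $a^{\lambda_i}_\Phi-\lambda_i(\cdot)$; choose an index with $a^{\lambda_1}_\Phi-\lambda_1(p)=t$. Because $\lambda_1$ is primitive, and hence nonzero, one may pick a direction $v$ with $\lambda_1(v)>0$; then for small $\varepsilon>0$ the point $q=p+\varepsilon v$ satisfies $\mathcal{F}_\Phi(q)\le a^{\lambda_1}_\Phi-\lambda_1(q)=t-\varepsilon\lambda_1(v)<t$, so $q\notin\Phi(t)$, contradicting $p\in\Phi(t)^\circ$. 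Hence $p\in\partial\Phi(t)$. Combining the two inclusions gives $\mathcal{F}_\Phi^{-1}(t)=\partial\Phi(t)$; the remaining case $t=0$ is immediate, since $\Phi(0)=\Phi$ while $\mathcal{F}_\Phi$ vanishes exactly on $\partial\Phi$.

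The step I expect to be the main obstacle is the reverse inclusion, that is, excluding level points from the interior of $\Phi(t)$. This is precisely where the nonvanishing of the local ``gradient'' $-\lambda$ — guaranteed by primitivity of the slopes — is essential: it prevents $\mathcal{F}_\Phi$ from being locally constant and thereby forces the level set to be genuinely the topological boundary rather than a fattened region. The rest of the argument is a formal manipulation of the defining infimum together with concavity.
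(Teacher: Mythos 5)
Your proof is correct and amounts to exactly the approach the paper intends: the paper gives no argument at all (it calls the lemma ``straightforward from the definition''), and your central superlevel-set identity $\Phi(t)=\{p\in\Phi:\mathcal{F}_\Phi(p)\ge t\}$, combined with continuity of the concave function $\mathcal{F}_\Phi$ on $\Phi^\circ$ for one inclusion and the local strict-decrease argument for the other, is precisely that straightforward unfolding of Definitions \ref{def_continousfront} and \ref{def_series}. Your local ``finite minimum of affine functions'' step is justified at the same level of rigor the paper itself uses in the proof of Proposition \ref{prop_cornerlocus}, so there is no gap to report.
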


The following proposition implies Theorem \ref{caustic-trop-curve}.
\begin{proposition}\label{prop_cornerlocus} The caustic
$\kf$ is the tropical curve defined by $\mathcal{F}_\Phi$, i.e. the
locus of points $p\in\Phi^\circ$ where $\mathcal{F}_\Phi$ is not smooth.
\end{proposition}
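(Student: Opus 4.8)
The plan is to read off the caustic directly from the variational structure of $\mathcal{F}_\Phi$. Since $\mathcal{F}_\Phi(p)=\inf_\lambda(a^\lambda_\Phi-\lambda(p))$ is an infimum of affine functions of $p$, it is concave; and by the argument of Proposition \ref{prop_positiveshiftispolygon}, at every interior point $p$ (where $\mathcal{F}_\Phi(p)>0$) only finitely many primitive slopes $\lambda$ are relevant in a neighbourhood. Hence near such $p$ the function $\mathcal{F}_\Phi$ is a minimum of finitely many affine functions, so its corner locus is a locally finite balanced graph, i.e.\ a genuine tropical curve in the sense recalled before Definition \ref{def_series}. The bridge I would set up first is the elementary observation that a slope $\lambda$ attains the infimum at $p$ if and only if $\lambda(p)=a^\lambda_\Phi-t$ with $t=\mathcal{F}_\Phi(p)$, that is, if and only if $p\in\partial H_\lambda(t)$ for the support half-plane $H_\lambda=\{\lambda\le a^\lambda_\Phi\}$.

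With this in hand the set-theoretic equality is almost a matter of unwinding definitions. A point $p\in\Phi^\circ$ fails to be smooth for $\mathcal{F}_\Phi$ exactly when at least two distinct primitive slopes attain the infimum, which by the bridge means that there are two distinct support half-planes $H_\lambda,H_\mu$ of $\Phi$ with $p\in\partial H_\lambda(t)\cap\partial H_\mu(t)$ for $t=\mathcal{F}_\Phi(p)$ --- precisely the definition of $p$ being a \emph{special} point of the wave front $\Phi(t)$. Organising interior points by their level $t=\mathcal{F}_\Phi(p)$ via Lemma \ref{lem_levelsets}, I would then invoke the dichotomy of Section \ref{sec_caustic}: for $0<t<t_\Phi$ the special points of $\partial\Phi(t)$ are exactly the vertices of the polygon $\Phi(t)$, while at $t=t_\Phi$ the entire degenerate locus $\Phi_{t_\Phi}=\mathcal{F}_\Phi^{-1}(t_\Phi)$ consists of special points. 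This matches the non-smooth locus of $\mathcal{F}_\Phi$ in $\Phi^\circ$ with $\kf$ as defined in Definition \ref{def_caustic}.

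It remains to check that the two weightings agree. The weight of a point in the relative interior of an edge of the corner locus is the integer length of the segment in $M$ spanned by the maximizing slopes. Along an edge traced by a vertex $v(t)$ whose adjacent sides have fixed slopes $\lambda_1,\lambda_2$ generating an $A_n$-cone, I claim every lattice point $\lambda_3=\alpha\lambda_1+\beta\lambda_2$ with $\alpha+\beta=1$, $\alpha,\beta\ge0$, is also maximizing: $\lambda_1,\lambda_2$ are maximizing since $v\in\partial H_{\lambda_i}(t)$, and convexity of the support function $\lambda\mapsto a^\lambda_\Phi$ gives $a^{\lambda_3}_\Phi-\lambda_3(v)\le\alpha(a^{\lambda_1}_\Phi-\lambda_1(v))+\beta(a^{\lambda_2}_\Phi-\lambda_2(v))=t$, while $a^{\lambda_3}_\Phi-\lambda_3(v)\ge\mathcal{F}_\Phi(v)=t$ always; hence equality. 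Thus the maximizing segment is exactly $[\lambda_1,\lambda_2]$, whose integer length equals the width $n+1$ of the $A_n$-cone (height one, by Theorem \ref{fan-description}), matching the prescribed weight $n+1$. For the final locus the two collapsing sides are antiparallel, $\mu=-\lambda$, so the spanning segment $[\lambda,-\lambda]$ has integer length $2$, matching the weight $2$.

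The step I expect to require the most care is not the pointwise equivalence but the analysis along the degenerate final locus $\Phi_{t_\Phi}$, where the clean convexity argument for the interior $A_n$-edges must be replaced by the direct verification that the only maximizing slopes are the antiparallel pair $\pm\lambda$, and where one must confirm that local finiteness still holds so that the corner locus remains a balanced tropical curve there. The uniformity across the three possible shapes of the final locus --- point, interval, ray, or line (Proposition \ref{finallocusclass}) --- is the genuinely delicate point; the balancing condition itself then follows automatically from the general theory once local finiteness, supplied by Proposition \ref{prop_positiveshiftispolygon}, is established.
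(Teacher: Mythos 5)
Your proposal is correct and takes essentially the same approach as the paper: local finiteness of the contributing terms (via the argument of Proposition \ref{prop_positiveshiftispolygon}) together with the observation that the minimizing term is unique precisely at the non-vertex points of $\partial\Phi(t)$, which identifies the non-smooth locus of $\mathcal{F}_\Phi$ with the special points constituting $\mathcal{K}_\Phi$. Your additional verification of the weights goes beyond both the statement and the paper's proof (the paper only asserts this agreement in a remark after the proof); there you establish only that the lattice points of $[\lambda_1,\lambda_2]$ minimize, and to conclude the weight is \emph{exactly} $n+1$ one should also note that any other minimizing slope at an edge-interior point must lie in the dual cone of the vertex, where it can coincide with the common value of $f_1=f_2$ only at isolated points.
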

\begin{proof}
Consider the time $t$ wave front $\dd\Phi(t)$. By Proposition \ref{prop_positiveshiftispolygon}, locally near $p\in\dd\Phi_t$,
the wave front $\dd\Phi(t)$ is polygonal, while only finitely many terms in \eqref{def_theseries} reach the value $t$. Such a term is unique if and only if $p$ is not a vertex of $\dd\Phi(t)$. Thus the function $\mathcal{F}_\Phi$ is locally linear at $p$ if $p$ is not a vertex and not smooth otherwise. 
\end{proof}
Note that, as a tropical series, $\mathcal{F}_\Phi$ defines the weight on the edges of its tropical locus $\kf$, and this weight agrees with the weight of points on $\kf$ that we have defined via the type of vertices of $\Phi(t)$.

\section{Tropical caustic of a cone}

If $\Phi$ is a half-plane with rational slope, then $\kf=\emptyset$. If $\Phi$ is a strip with rational slope then $\kf$ is the line going through the middle of the strip taken with weight two.
Let us consider the case when $\Phi=\Sigma\subset\nr$ is the (strictly) convex cone bounded by two non-parallel rays emanating from the same point. 

\begin{figure}
\begin{tikzpicture}
\node at (0,0) {$\bullet$};
\node at (0,1) {$\bullet$};
\node at (0,2) {$\bullet$};
\node at (0,3) {$\bullet$};
\node at (0,4) {$\bullet$};
\node at (0,5) {$\bullet$};
\node at (0,6) {$\bullet$};
\node at (0,7) {$\bullet$};

\node at (1,0) {$\bullet$};
\node at (1,1) {$\bullet$};
\node at (1,2) {$\bullet$};
\node at (1,3) {$\bullet$};
\node at (1,4) {$\bullet$};
\node at (1,5) {$\bullet$};
\node at (1,6) {$\bullet$};
\node at (1,7) {$\bullet$};

\node at (2,0) {$\bullet$};
\node at (2,1) {$\bullet$};
\node at (2,2) {$\bullet$};
\node at (2,3) {$\bullet$};
\node at (2,4) {$\bullet$};
\node at (2,5) {$\bullet$};
\node at (2,6) {$\bullet$};
\node at (2,7) {$\bullet$};

\node at (3,0) {$\bullet$};
\node at (3,1) {$\bullet$};
\node at (3,2) {$\bullet$};
\node at (3,3) {$\bullet$};
\node at (3,4) {$\bullet$};
\node at (3,5) {$\bullet$};
\node at (3,6) {$\bullet$};
\node at (3,7) {$\bullet$};

\draw[very thick](0,0)--(3.15,7.35);
\draw[very thick](0,0)--(3.3,0);

\draw(0,0)--(3.15,6.3);
\draw(0,0)--(3.15,3.15);

\node at (3.4,6.4) {3};
\node at (3.4,3.2) {1};

\begin{scope}[yshift=100,xshift=130]

\draw[very thick](0,0)--(7.7,-3.3);
\draw[very thick](0,0)--(0,3.3);

\draw(7,-3)--(1,0)--(0,1);

\node at (0,-3) {$\bullet$};
\node at (0,-2) {$\bullet$};
\node at (0,-1) {$\bullet$};
\node at (0,0) {$\bullet$};
\node at (0,1) {$\bullet$};
\node at (0,2) {$\bullet$};
\node at (0,3) {$\bullet$};

\node at (1,-3) {$\bullet$};
\node at (1,-2) {$\bullet$};
\node at (1,-1) {$\bullet$};
\node at (1,0) {$\bullet$};
\node at (1,1) {$\bullet$};
\node at (1,2) {$\bullet$};
\node at (1,3) {$\bullet$};

\node at (2,-3) {$\bullet$};
\node at (2,-2) {$\bullet$};
\node at (2,-1) {$\bullet$};
\node at (2,0) {$\bullet$};
\node at (2,1) {$\bullet$};
\node at (2,2) {$\bullet$};
\node at (2,3) {$\bullet$};

\node at (3,-3) {$\bullet$};
\node at (3,-2) {$\bullet$};
\node at (3,-1) {$\bullet$};
\node at (3,0) {$\bullet$};
\node at (3,1) {$\bullet$};
\node at (3,2) {$\bullet$};
\node at (3,3) {$\bullet$};

\node at (4,-3) {$\bullet$};
\node at (4,-2) {$\bullet$};
\node at (4,-1) {$\bullet$};
\node at (4,0) {$\bullet$};
\node at (4,1) {$\bullet$};
\node at (4,2) {$\bullet$};
\node at (4,3) {$\bullet$};

\node at (5,-3) {$\bullet$};
\node at (5,-2) {$\bullet$};
\node at (5,-1) {$\bullet$};
\node at (5,0) {$\bullet$};
\node at (5,1) {$\bullet$};
\node at (5,2) {$\bullet$};
\node at (5,3) {$\bullet$};

\node at (6,-3) {$\bullet$};
\node at (6,-2) {$\bullet$};
\node at (6,-1) {$\bullet$};
\node at (6,0) {$\bullet$};
\node at (6,1) {$\bullet$};
\node at (6,2) {$\bullet$};
\node at (6,3) {$\bullet$};

\node at (7,-3) {$\bullet$};
\node at (7,-2) {$\bullet$};
\node at (7,-1) {$\bullet$};
\node at (7,0) {$\bullet$};
\node at (7,1) {$\bullet$};
\node at (7,2) {$\bullet$};
\node at (7,3) {$\bullet$};
\end{scope}

\end{tikzpicture}
\caption{A cone $\Sigma\subset\nr$ (on the left) with its caustic, and its dual cone $\Sigma^*\subset\mr$ with the intervals of $\mathcal{P}_\Sigma$ (on the right).\label{1-3-caustic}}
\end{figure}
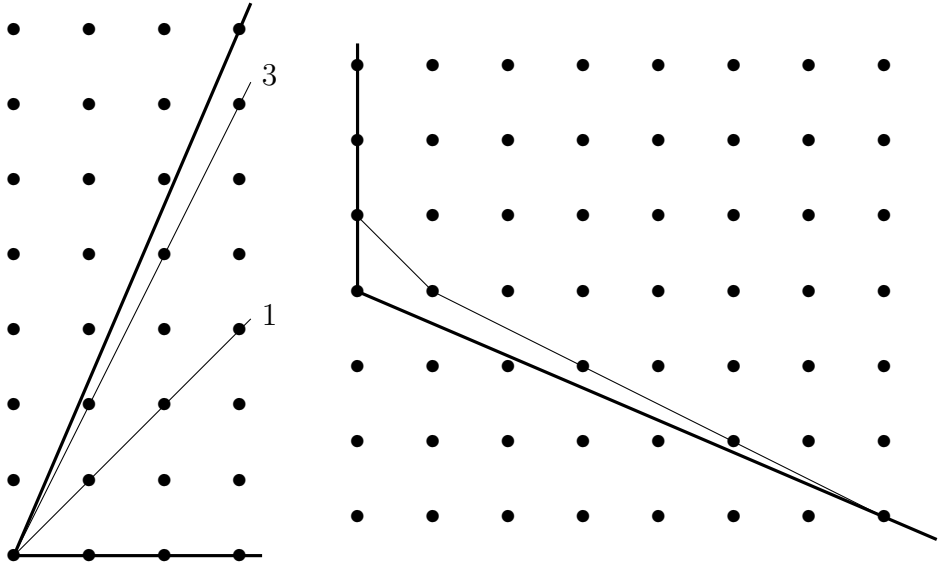

Without loss of generality, we may assume that the apex of the cone is the origin in $\mr$.
and that all the coefficients $a_\lambda^\Sigma$ of the series $\mathcal{F}_\Sigma$ from \eqref{def_theseries} vanish.
In this case the series $\mathcal{F}_\Sigma$ is homogeneous, i.e $\mathcal{F}_\Sigma(cp)=c\mathcal{F}_\Sigma(p),$ for all $p\in\Sigma$ and $c>0$, and the tropical caustic of $\Sigma$ is the (possibly infinite) union of open rays with rational slopes emanating from the origin. 

There is the following recipe for determining these rays, as well as their weights. Take $\Sigma^*\subset\mr$ the dual cone\footnote{As usual, $\Sigma^*=\{\nu\in M_\mathbb{R}:\nu(\Sigma)\ge 0\}$.} to $\Sigma.$ Denote by $\mathcal{P}_\Sigma$ a lattice polygonal domain in $M_\mathbb{R}$ defined as the convex hull of the set $\Sigma^*\cap M \backslash\{0\}.$

\newcommand{\ks}{{\mathcal K}_\Sigma}
\begin{proposition}\label{prop_causticofcone} 
There is a one-to-one correspondence between rays in $\ks$ and finite length sides of $\mathcal{P}_\Sigma:$ for every ray there is a unique side orthogonal to it, and vice versa. Moreover, the lattice length of a side in $\mathcal{P}_\Sigma$ is the weight of a corresponding ray in $\kf$.
\end{proposition}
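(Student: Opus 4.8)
The plan is to first turn the homogeneous series $\mathcal{F}_\Sigma$ into the support function of $\mathcal{P}_\Sigma$, and then to read off both the rays of the caustic and their weights directly from the boundary structure of $\mathcal{P}_\Sigma$ via Proposition \ref{prop_cornerlocus}. The first step is to simplify \eqref{def_theseries}. Since $\Sigma$ is a cone with apex at the origin, a primitive $\lambda\in M$ has $a^\lambda_\Sigma=\sup_{p\in\Sigma}\lambda(p)$ equal to $0$ when $\lambda\le 0$ on $\Sigma$ (equivalently $-\lambda\in\Sigma^*$) and equal to $+\infty$ otherwise; hence only the former $\lambda$ contribute, and writing $\mu=-\lambda$ gives $\mathcal{F}_\Sigma(p)=\inf_{\mu}\mu(p)$, the infimum taken over primitive $\mu\in\Sigma^*\cap M$. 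For $p\in\Sigma$ every $\mu\in\Sigma^*$ satisfies $\mu(p)\ge 0$, so rescaling a primitive vector only increases $\mu(p)$; thus the infimum over primitive vectors equals the infimum over all of $(\Sigma^*\cap M)\setminus\{0\}$, and, since a linear functional attains its minimum over a polytope at a vertex (which is a point of this set), equals $\min_{\mu\in\mathcal{P}_\Sigma}\langle\mu,p\rangle$. In other words $\mathcal{F}_\Sigma$ is the support function of the lattice polygonal domain $\mathcal{P}_\Sigma$.

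Next I would locate the non-smooth locus. By Proposition \ref{prop_cornerlocus}, $\ks$ is exactly the set of $p\in\Sigma^\circ$ where $\mathcal{F}_\Sigma=\min_{\mu\in\mathcal{P}_\Sigma}\langle\mu,p\rangle$ fails to be smooth, i.e. where the minimizing face of $\mathcal{P}_\Sigma$ is not a single vertex. For $p\in\Sigma^\circ$ one has $\langle\mu,p\rangle>0$ for all $\mu\in\Sigma^*\setminus\{0\}$ by cone duality, so $\langle\cdot,p\rangle$ is bounded below on $\mathcal{P}_\Sigma$ and attains its minimum on a compact face; this face is a vertex or a finite-length side of $\mathcal{P}_\Sigma$, the two infinite edges of $\mathcal{P}_\Sigma$ lying along $\partial\Sigma^*$ and being selected only by $p\in\partial\Sigma$. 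Thus $p\in\ks$ if and only if the minimum is attained along a finite side $E$. For a fixed such $E$, the set of $p\in\Sigma^\circ$ minimizing $\langle\cdot,p\rangle$ along $E$ is the relative interior of the inner normal cone of $E$; since $\mathcal{P}_\Sigma$ is two-dimensional and $E$ is an edge, this is a single open ray $R_E$, characterized by $\langle e,p\rangle=0$ for the direction $e$ of $E$, i.e. orthogonal to $E$. Conversely each $p\in\ks$ lies on $R_E$ for a unique $E$, so $E\mapsto R_E$ is the desired bijection between finite sides of $\mathcal{P}_\Sigma$ and rays of $\ks$, each ray orthogonal to its side.

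Finally, for the weights I would invoke the tropical-series description recalled after Definition \ref{def_series}: the weight along an edge of the corner locus is the integer length of the segment spanned by the indices $\mu$ of the terms realizing the minimum. For $p$ in the open ray $R_E$, these indices are precisely the lattice points of $\mathcal{P}_\Sigma$ lying on $E$, so the spanned integer length is the lattice length of $E$. This identifies the weight of the ray with the lattice length of the corresponding side and matches the weight assigned through the $A_n$-type of the vertices of $\Sigma(t)$.

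The main obstacle is the second paragraph: one must check carefully that for interior directions only the finite-length sides occur, that the inner normal cone of each such side is a single ray, and that these rays together with the open sectors over the vertices subdivide all of $\Sigma^\circ$, so that the correspondence is genuinely bijective; one must also ensure that these statements survive the possibility that $\mathcal{P}_\Sigma$ is unbounded or has infinitely many finite sides. Once this normal-fan picture of $\mathcal{P}_\Sigma$ inside $\Sigma^\circ$ is in place, the orthogonality and the weight computation are immediate.
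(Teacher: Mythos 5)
Your proposal is correct and follows essentially the same route as the paper: both reduce $\mathcal{F}_\Sigma$ to the minimization of $\langle\mu,p\rangle$ over $\mathcal{P}_\Sigma$ and identify the caustic with the locus where the minimizing face is a finite side (necessarily orthogonal to the corresponding ray), with weight equal to the lattice length of that side. The only difference is packaging: the paper verifies the normal-fan facts by hand, writing each lattice point as $c_1\lambda_1+c_2\lambda_2$ with $\lambda_1,\lambda_2$ adjacent vertices and $c_1+c_2\ge 1$ and tracking when the inequalities become equalities, whereas you invoke the support-function/normal-cone picture as standard convex geometry.
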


\begin{proof} 
Consider a lattice point $\nu\in\mathcal{P}_\Sigma\cap M$. 
If $\nu$ is not a vertex of $\mathcal{P}_\Sigma$ then there exist adjacent vertices $\lambda_1,\lambda_2$ of $\mathcal{P}_\Sigma$ and $c_1,c_2\geq 0,$ $c_1+c_2\geq 1$ such that $\nu=c_1\lambda_1+c_2\lambda_2.$ 
For a point $p\in\Sigma\subset\nr$ we have 
$$\nu(p)\stackrel{(A)}{\geq} (c_1+c_2)\min(\lambda_1(p),\lambda_2(p))\stackrel{(B)}{\geq}\min(\lambda_1(p),\lambda_2(p))\stackrel{(C)}{\geq}\mathcal{F}_\Sigma(p).$$  

Note that the inequality (C) holds since $\mathcal{F}_\Sigma$ is defined as the infimal tropical distance to the boundaries of the support half-planes to $\Sigma$, see Definition \ref{def_series}.

If $\nu$ belongs to the interior of $\mathcal{P}_\Sigma,$ then $c_1+c_2>1$ and the inequality (B) is strict for all $p\in\Sigma^\circ$ since $\min(\lambda_1(p),\lambda_2(p))>0.$ In other words, monomials from the interior of $\mathcal{P}_\Sigma$ do not contribute to the series $\mathcal{F}_\Sigma$ on $\Sigma^\circ.$

If $\nu$ belongs to the boundary of $\mathcal{P}_\Sigma,$ then $c_1+c_2=1.$ In such case, $\nu(p)$ can be equal to $\mathcal{F}_\Sigma(p)$ only if (A) is an equality, that is $\lambda_1(p)=\lambda_2(p).$ The locus $\Lambda$ of the points $p\in\Sigma^\circ$ with this property is a ray orthogonal to $\lambda_1-\lambda_2,$ a side $L$ of the polygon $\mathcal{P}_\Sigma.$ 

Note that the support half-planes of $\Sigma$ are defined by the indivisible non-zero vectors of $M\cap\Sigma^*$ (also contained in $\mathcal{P}_\Sigma$), while the vectors from $\mathcal{P}_\Sigma\setminus L$ correspond to support half-planes with boundaries at a distance larger than $\lambda_1(p)=\lambda_2(p).$ By definition, the weight of $\Lambda$ is the lattice length of $L$.
\end{proof}

The following corollaries are immediate.

\begin{corollary}\label{cone-subdiv}
If both sides of $\Sigma$ are parallel to integer vectors then the caustic $\ks$ consists of a finite number of rays. Otherwise, it consists of infinite number of rays. Each cone in the subdivision of $\Sigma$ by these rays is generated by an integer basis of $N$ (i.e. is a tropical right angle).
\end{corollary}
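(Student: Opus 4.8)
The plan is to read everything off from the correspondence of Proposition \ref{prop_causticofcone}, which matches the rays of $\ks$ with the finite-length sides of the sail $\mathcal{P}_\Sigma=\mathrm{conv}(\Sigma^*\cap M\setminus\{0\})$, and then to translate statements about $\ks$ into statements about the vertices and edges of $\mathcal{P}_\Sigma$. First the counting dichotomy. If both boundary rays of $\Sigma$ are parallel to integer vectors then both boundary rays of $\Sigma^*$ have rational slope (the annihilator of a rational ray is rational), so $\Sigma^*$ is a rational cone and $\partial\mathcal{P}_\Sigma$ consists of its two unbounded rational edges together with \emph{finitely many} finite-length sides; by Proposition \ref{prop_causticofcone} these produce finitely many rays. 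If instead a boundary ray of $\Sigma$ is irrational, the corresponding boundary ray of $\Sigma^*$ carries no nonzero lattice point, so the sail approaches that irrational direction through \emph{infinitely many} finite edges (the successive best approximations of the slope), whence $\ks$ has infinitely many rays. I expect no difficulty here.

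The substantive claim is that each cone cut out of $\Sigma$ by two consecutive rays of $\ks$ is a right angle. I would first record the elementary rank-two fact that the map sending a primitive covector $w\in M$ to the primitive vector $u\in N$ with $w(u)=0$ preserves the wedge up to sign (in coordinates $u=(-w_2,w_1)$, and $\det(u_1,u_2)=\det(w_1,w_2)$). Hence if $u_1,u_2$ generate two consecutive rays of $\ks$, orthogonal to the primitive directions $w_1,w_2\in M$ of two consecutive sides of $\mathcal{P}_\Sigma$ meeting at a vertex $\lambda$, then $|u_1\wedge u_2|=|w_1\wedge w_2|$. Thus the cone they bound is a tropical right angle exactly when $\{w_1,w_2\}$ is a lattice basis of $M$, i.e. exactly when $\lambda$ is a \emph{smooth} (unimodular) vertex of the sail. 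The two extreme cones, each bounded by a ray of $\ks$ and a boundary ray of $\Sigma$, correspond in the same way to the two extreme sail vertices on $\partial\Sigma^*$. So everything reduces to: every vertex of $\mathcal{P}_\Sigma$ is smooth.

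This smoothness is where the real work sits, and it is precisely where the hypothesis that $\mathcal{P}_\Sigma$ is the hull of \emph{all} nonzero lattice points of $\Sigma^*$ enters. I would argue by contradiction: if the primitive outgoing edge directions $w_1,w_2$ at a vertex $\lambda$ satisfy $d=|w_1\wedge w_2|\ge 2$, consider the parallelogram spanned at $\lambda$ by the backward directions $-w_1,-w_2$, which point to the origin side of the sail. Its area is $d\ge 2$, so it contains a lattice point $z$ other than its corners; since $w_1,w_2$ are primitive, $z$ cannot lie on the boundary edges, so $z=\lambda-sw_1-tw_2$ with $s,t\in(0,1)$ lies in the open cone $\mathrm{cone}(-w_1,-w_2)$, that is, strictly between the sail and the apex. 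If $z\in\Sigma^*\cap M$ this contradicts $z\in\mathcal{P}_\Sigma$.

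The main obstacle is exactly the last step: one must guarantee that the witnessing lattice point actually lies in $\Sigma^*$ (the corners $\lambda-w_i$ typically do not, so a naive choice can escape the cone for a thin $\Sigma^*$). The correct remedy is to take the Farey/mediant intermediate direction, which is forced to poke below the corner while staying inside $\Sigma^*$; this is immediate for a vertex interior to $\Sigma^*$ and needs a short separate check for the two vertices on $\partial\Sigma^*$. Conceptually — and this is the viewpoint I would ultimately adopt for the write-up — the caustic subdivision of $\Sigma$ is nothing but the Hirzebruch--Jung minimal resolution of the cone $\Sigma$, whose cones are unimodular by construction; together with the normalization ``weight $=$ lattice length'' from Proposition \ref{prop_causticofcone}, this shows that every cone of the subdivision is generated by a lattice basis, completing the proof.
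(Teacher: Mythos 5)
Your opening steps are fine: the finite/infinite dichotomy does follow from Proposition \ref{prop_causticofcone} exactly as you describe, and your reduction is also set up correctly --- if $u_1,u_2\in N$ generate consecutive rays of $\ks$, dual to consecutive finite sides of $\mathcal{P}_\Sigma$ with primitive directions $w_1,w_2\in M$ meeting at a vertex $\lambda$, then indeed $|u_1\wedge u_2|=|w_1\wedge w_2|$, so the cone between the two rays is a tropical right angle if and only if $\lambda$ is a unimodular (``smooth'') vertex of the sail.

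The gap is that the lemma everything is reduced to --- \emph{every} vertex of $\mathcal{P}_\Sigma$ is smooth --- is false, and the loophole you yourself flag cannot be repaired. Take $\Sigma\subset\nr$ to be the $A_3$-cone generated by $(1,0)$ and $(1,4)$, so that $\Sigma^*\subset\mr$ is generated by $(0,1)$ and $(4,-1)$. Then $\mathcal{P}_\Sigma$ has exactly three vertices $(0,1)$, $(1,0)$, $(4,-1)$ and two finite sides, each of lattice length $1$. At $\lambda=(1,0)$ the outgoing primitive edge directions are $w_1=(-1,1)$ and $w_2=(3,-1)$, with $|w_1\wedge w_2|=2$: a non-smooth vertex. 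Your parallelogram at $\lambda$ has area $2$ and, by Pick's formula, exactly one non-corner lattice point --- namely the origin, which lies in $\Sigma^*$ but is precisely the point excluded from the definition of $\mathcal{P}_\Sigma$. No mediant adjustment can produce another witness, because there is none: by definition of the hull, every nonzero lattice point of $\Sigma^*$ lies on the far side of the sail. Accordingly $\ks$ consists of the two rays generated by $(1,1)$ and $(1,3)$, and the middle cone of the subdivision, generated by $(1,1)$ and $(1,3)$, has determinant $2$: it is an $A_1$-cone, not a right angle. The same example refutes your closing Hirzebruch--Jung identification: the minimal resolution of this $\Sigma$ uses the rays through $(1,1),(1,2),(1,3)$ (the sail of $\Sigma$ itself in $N$), whereas the caustic subdivision is dual to the sail of $\Sigma^*$, omits $(1,2)$, and is strictly coarser and non-unimodular.

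Because your reduction is faithful, this example does more than break your proof: it shows that the full unimodularity claim you are aiming at (consecutive caustic rays always spanning a right angle) cannot be established by any argument, since it fails for this $\Sigma$. What the sail picture does yield --- from the standard fact that the triangle spanned by $0$ and two \emph{adjacent lattice points} on a side of $\mathcal{P}_\Sigma$ is empty, so that those two points form a basis of $M$ --- is that each ray of $\ks$ makes a tropical right angle with the two adjacent edges of the front $\Sigma(t)$; in particular the two extreme cones of the subdivision, those touching a rational boundary ray of $\Sigma$, are right angles, while the intermediate cones are in general only $A_n$-cones. That weaker statement is the one actually invoked later in the paper (in Proposition \ref{prop-lprime} and Proposition \ref{negative-evolution}), and it is what your method, correctly completed, proves; I would rewrite your argument to target exactly that.
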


\begin{corollary}\label{An-single}
The caustic $\ks$ of a cone $\Sigma\subset\nr$ consists of a single ray if and only if $\Sigma$ is canonical. The weight of $\ks$ is $n+1$ if $\Sigma$ is dual to an $A_n$-cone.
\end{corollary}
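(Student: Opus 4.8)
The plan is to read everything off Proposition \ref{prop_causticofcone}. That proposition gives a bijection between the rays of $\ks$ and the finite-length sides of $\mathcal{P}_\Sigma$, under which the weight of a ray equals the lattice length of the corresponding side. Hence $\ks$ consists of a single ray exactly when $\mathcal{P}_\Sigma$ has exactly one finite side, and the weight of that ray is the lattice length of that side. I would first record that a single ray forces both boundary rays of $\Sigma$ to have rational slope: by Corollary \ref{cone-subdiv}, an irrational side would produce infinitely many rays. So $\Sigma^*$ is a genuine rational cone whose two boundary rays are spanned by primitive vectors $\lambda_1,\lambda_2\in M$; these are precisely the endpoints joined by the unique finite side. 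The whole problem thus becomes: $\mathcal{P}_\Sigma$ has a single finite side if and only if $\Sigma^*$ is an $A_{d-1}$-cone.

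For the forward implication I would just exhibit the model. If $\Sigma$ is dual to an $A_n$-cone then $\Sigma^*$ is an $A_n$-cone, so in a suitable basis of $M$ it is generated by $(0,1)$ and $(d,1)$ with $d=n+1$. Every lattice point of $\Sigma^*$ lies in $\{y\ge 1\}$, and those with $y=1$ are exactly $(0,1),\dots,(d,1)$; therefore the boundary of $\mathcal{P}_\Sigma$ facing the origin is the single segment from $(0,1)$ to $(d,1)$, of lattice length $d=n+1$. This yields one ray, of weight $n+1$, as claimed.

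For the converse, suppose $\mathcal{P}_\Sigma$ has a single finite side $[\lambda_1,\lambda_2]$. Choose a basis of $M$ with $\lambda_1=(0,1)$ and $\lambda_2=(a,b)$, where $a\ge 1$ and $\lambda_2$ is primitive. The region of $\Sigma^*$ lying on the origin-side of the line through $\lambda_1,\lambda_2$ is exactly the triangle $O\lambda_1\lambda_2$, and one checks that $[\lambda_1,\lambda_2]$ is a single edge of $\mathcal{P}_\Sigma$ precisely when no lattice point of $\Sigma^*\setminus\{0\}$ lies strictly on the origin-side of that line; since the radial edges $[O,\lambda_1]$ and $[O,\lambda_2]$ carry no interior lattice points, this reduces to the interior of $O\lambda_1\lambda_2$ being lattice-free. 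A Pick count then finishes: the triangle has area $a/2$ and $\gcd(a,b-1)+2$ boundary lattice points, so its number of interior lattice points is $(a-\gcd(a,b-1))/2$, which vanishes if and only if $a\mid (b-1)$. This is exactly the condition that $\Sigma^*$ have width $a$ and height one, i.e. that it be an $A_{d-1}$-cone with $d=a$, equivalently that $\Sigma$ be canonical; and then the single side has lattice length $\gcd(a,b-1)=a=d=n+1$, recovering the asserted weight. The one step deserving care is the geometric equivalence ``single finite side $\iff$ $\operatorname{int}(O\lambda_1\lambda_2)\cap M=\emptyset$'', i.e.\ ruling out that a lattice point of $\Sigma^*$ slips below the chord $[\lambda_1,\lambda_2]$ and breaks it into several edges; once this elementary convexity point is pinned down, the remainder is the routine Pick computation above.
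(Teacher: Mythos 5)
Your proof is correct and follows essentially the same route as the paper: the paper obtains Corollary \ref{An-single} as an immediate consequence of Proposition \ref{prop_causticofcone}, exactly as you do, by matching the single ray of $\ks$ with a single finite side of $\mathcal{P}_\Sigma$. The only difference is that you spell out the step the paper treats as immediate — that $\mathcal{P}_\Sigma$ has exactly one finite side precisely when $\Sigma^*$ is an $A_n$-cone, with that side of lattice length $n+1$ — via the reduction to the lattice-free triangle $O\lambda_1\lambda_2$ and an explicit Pick's-theorem computation, which is a correct and complete way to fill in that detail.
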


\begin{remark}
The only ray of the caustic of $\Sigma$ in the case when $\Sigma$ is a canonical tropical angle can be interpreted as its {\em tropical bissectrice}, see \cite{MiL}. For a general tropical angle, $\ks$ is its {\em tropical multisectrice} (consisting of several rays).
\end{remark}

Let $\Phi(t)$, $t>0$, be a polygonal domain obtained as the result of time $t$ evolution of an admissible domain $\Phi\subset\nr$, and $E(t)\subset\dd\Phi(t)$ be its side. If $E(t)$ is bounded then it has two endpoints at two canonical angles of $\Phi(t)$. Let $B_+,B_-$ be the corresponding tropical bissectrices of that angles, 
$\nu_+,\nu_-\in M$ be the primitive vectors conormal to $B_\pm$, and oriented in the outward direction with respect to $E(t)$.
Let $\nu\in M$ be the primitive vectors conormal to $E(t)$, and oriented in the outward direction with respect to $\Phi(t)$.
Denote by $R_\pm,R\subset\mr$ the rays generated by $\nu_\pm,\nu$.

The double tropical angle $\tr(R_-,R,R_+)\in\Z$ is integer since ${\nu,\nu_\pm}$ is a basis of $M$ by Corollary \ref{cone-subdiv}.
Denote by $l(E(t))$ the tropical length of the bounded edge $E(t)$, and by $l'(E(t))$ its derivative with respect to $t$.
We call its the {\em length gradient} of $E(t)$.
\begin{proposition}\label{prop-lprime}
We have 
$$l'(E(t))=-\tr(R_-,R,R_+)\in\Z,$$
\end{proposition}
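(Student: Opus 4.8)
The plan is to compute the rate of change of the edge length by understanding how the two endpoints of $E(t)$ move as $t$ increases. Since $E(t)$ is a bounded side of the polygonal domain $\Phi(t)$, it lies on the boundary line $\dd H_\nu(t)$, which by \eqref{eq_halfplaneshift} translates inward at unit tropical speed. The two endpoints of $E(t)$ are the vertices where $\dd H_\nu(t)$ meets the adjacent sides; equivalently, each endpoint is the intersection of $\dd H_\nu(t)$ with the boundary line of the neighbouring support half-plane. Crucially, by Corollary \ref{cone-subdiv} and the discussion preceding the proposition, the bissectrices $B_\pm$ are exactly the loci traced by these two vertices as $t$ varies, so the velocity of each endpoint is a vector along $B_\mp$ (the bissectrice of the angle at that endpoint). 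First I would set up tropical coordinates (a basis of $M$) adapted to $E(t)$, so that $\nu$ is, say, the first basis covector and $E(t)$ is horizontal; then the endpoint velocities are determined by the condition that each endpoint stays on $\dd H_\nu(t)$ and on the corresponding neighbouring boundary line, both of which are translating at the prescribed tropical speeds.

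The key computation is to express the tropical length $l(E(t))$ as a function of $t$ and differentiate. Writing the positions of the two endpoints $q_\pm(t)$, the length is the tropical distance between them measured along $E(t)$, i.e. $l(E(t)) = \beta\bigl(q_+(t)-q_-(t)\bigr)$ where $\beta\in N$ is the primitive tangent vector to $E(t)$. Differentiating gives $l'(E(t)) = \beta(\dot q_+ - \dot q_-)$, so I only need the components of the endpoint velocities along $E(t)$. Each velocity $\dot q_\pm$ is obtained by solving the $2\times 2$ linear system expressing that $q_\pm$ moves to remain on the inward-translating lines $\dd H_\nu(t)$ and $\dd H_{\nu_\pm}(t)$; because $\{\nu,\nu_\pm\}$ is a lattice basis (the angle is canonical), this system is unimodular and the velocity is a lattice vector with explicit entries depending on $s_\pm$. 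The plan is then to recognise that the horizontal displacement rate $\beta(\dot q_+ - \dot q_-)$ reassembles precisely into $-(s_++s_-)$, i.e. $-\tr(R_-,R,R_+)$, once $R_0=R$ is taken as the distinguished middle ray in the definition of the double angle. This identification is the heart of the matter: the dual slopes $s_\pm$ that define $\tr$ are exactly the horizontal drift rates of the two vertices per unit inward motion of $\dd H_\nu(t)$.

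The main obstacle I expect is bookkeeping of signs and orientations so that the outward conormals $\nu,\nu_\pm$, the counterclockwise tangent $\beta$, and the convention $\tr(R_-,R_0,R_+)=-(s_++s_-)$ all line up to produce the correct sign in $l'(E(t))=-\tr(R_-,R,R_+)$ rather than its negative. In particular I must check that the half-plane $\dd H_\nu(t)$ moving in the $\nu$-direction causes the edge to \emph{shrink} precisely when the neighbouring lines converge, matching the sign of the self-intersection number interpretation of $\tr$ from Remark \ref{angle-sintersect}. The cleanest way to control this is to verify the formula first in the normalized model where $\nu=e_1^*$, $R_0$ maps to the upper vertical half-axis in the dual plane, and $R_\pm$ meet $\{x=\pm1\}$ at heights $s_\pm$; then invoke invariance under $\operatorname{Iso}(\et)$ (all tropical distances and the double angle are isomorphism invariants) to conclude in general. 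Once the normalized computation yields $l'=-(s_++s_-)$, the integrality and the final formula follow immediately from the definition of $\tr(R_-,R,R_+)$.
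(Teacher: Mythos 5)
Your strategy is the same as the paper's: the endpoints of $E(t)$ travel along the bissectrices $B_\pm$, you pass to coordinates adapted to $E(t)$, and you identify the horizontal drift rates of the endpoints with the dual slopes entering $\tr(R_-,R,R_+)$. (The paper's proof is exactly this computation, normalized further so that $B_+$ is vertical, whence only $B_-$ drifts.) However, one step of your argument, as written, is false, and for vertices of determinant greater than one it would corrupt the numbers.

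You use the single symbol $\nu_\pm$ for two different covectors and treat them as one. The translating line that together with $\dd H_\nu(t)$ cuts out the endpoint $q_\pm$ is the boundary of the \emph{neighbouring side's} support half-plane; call its outward conormal $\mu_\pm$, as the paper does in the proposition after this one. The covector $\nu_\pm$ in the statement you are proving -- the one generating $R_\pm$ and hence defining $s_\pm$ -- is instead conormal to the \emph{bissectrice} $B_\pm$. These never coincide: $\{\nu,\nu_\pm\}$ is a basis of $M$ (Corollary \ref{cone-subdiv}, which is how the paper sees that $\tr(R_-,R,R_+)\in\Z$), whereas $\nu$ and $\mu_\pm$ span a sublattice of index $n_\pm$, the determinant of the canonical angle at $q_\pm$. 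Hence your claim that the $2\times 2$ system given by the two translating lines is unimodular ``because the angle is canonical'' fails whenever $n_\pm>1$; for instance the cone $\{y\ge|x|\}$ is canonical with edge conormals $(1,-1)$ and $(-1,-1)$, of determinant $2$. The conclusion you want survives for a different reason: the solution of $\nu(\dot q_\pm)=\mu_\pm(\dot q_\pm)=-1$ is still a lattice vector, namely the primitive vector $w_\pm$ along $B_\pm$, because $B_\pm$ makes tropical right angles with both adjacent sides (Corollary \ref{cone-subdiv} again), so that $\nu(w_\pm)=\mu_\pm(w_\pm)=-1$. Correspondingly, your ``heart of the matter'' identification of drift rates with dual slopes is valid only for the bissectrice conormals: if you read off $s_\pm$ from $\mu_\pm$ instead, equation \eqref{eq-corr} shows you would be computing $\tr(M_-,R,M_+)=\tr(R_-,R,R_+)-\left(\frac1{n_+}+\frac1{n_-}\right)$, an error invisible in right-angle examples but present in general. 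Once the two covectors are distinguished the plan does go through, matching the paper's proof; note only that the bookkeeping should end with $l'=s_++s_-$, since by the paper's definition $\tr(R_-,R,R_+)=-(s_++s_-)$, so your claimed final identity $l'=-(s_++s_-)$ would assert $l'=+\tr$, the wrong sign.
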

In particular, the length gradients of all edges of $\Phi(t)$ are integer.
\begin{proof}
Choose the basis of $N$ so that $E(t)$ is horizontal, and the bissectrice $B_+$ is a vertical ray going down. Then the slope of the line extending the bissectrice $B_-$ is $\tr(R_-,R,R_+)$. On the other hand, minus this slope is the derivative of the length of the horizontal interval between $B_-$ and $B_+$ when we move this interval down with the unit velocity.
\end{proof}
The length gradient $l'(E(t))$ coincides with the value of the canonical class $K_{S(t)}$ of the toric surface $S(t)$ associated to the polytope $\Phi(t)$ on the homology class $[E(t)]\in H_2(S(t))$ of the toric divisor corresponding to the side $E(t)$.

\begin{proposition}
We have
$$l'(E(t))=K_{S(t)}([E(t)]).$$
\end{proposition}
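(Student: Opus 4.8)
The plan is to deduce this identity from the symplectic interpretation of the wave front evolution developed in Part II, rather than from a fresh combinatorial computation. Since $E(t)$ is bounded (as in Proposition \ref{prop-lprime}), the associated toric divisor $D_{E(t)}\subset S(t)$ is a rational curve, and its class $[E(t)]\in H_2(S(t))$ is exactly the one appearing in the statement. The key observation is that the symplectic area of this divisor is controlled by the lattice geometry of the moment domain: in the normalization of Theorem \ref{thm-candom}, one has
\[
\int_{D_{E(t)}}\omega(t)=2\pi\, l(E(t)),
\]
i.e. the $\omega(t)$-area of the toric divisor equals $2\pi$ times the tropical length of the corresponding edge of $\Phi(t)$. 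This is the standard Delzant-type correspondence between edge lengths of a moment polytope and symplectic areas of toric divisors, and its normalizing factor $2\pi$ is the same as the one in Theorem \ref{thm-candom}.

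First I would fix a non-critical time $t_0\in(0,t_\Phi)$ and work in a small interval around it. There the fan $\fan(\Phi(t))$ is constant, so the complex surface $S(t)$ and the combinatorial type of $\Phi(t)$ do not change; hence the homology class $[E(t)]=[E(t_0)]\in H_2(S(t_0))$ is independent of $t$, and $\int_{D_{E(t)}}\omega(t)=[\omega(t)]\cdot[E(t_0)]$ is a smooth function of $t$. Differentiating the displayed identity in $t$ and invoking the evolution equation $\tfrac{d}{dt}[\omega(t)]=2\pi\,K_{S(t)}$ of Theorem \ref{thm-candom} gives
\[
2\pi\, l'(E(t))=\Big(\tfrac{d}{dt}[\omega(t)]\Big)\cdot[E(t_0)]=2\pi\,K_{S(t)}\big([E(t)]\big),
\]
and cancelling $2\pi$ yields the claim. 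This is consistent with Proposition \ref{prop-lprime}: combining the two statements re-expresses $K_{S(t)}([E(t)])$ as $-\tr(R_-,R,R_+)$, which on a smooth toric surface is precisely the toric adjunction formula $K\cdot D=-2-D^2$ together with the self-intersection interpretation of the double angle in Remark \ref{angle-sintersect}.

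The main obstacle is not the differentiation but the two geometric inputs it rests on. I expect the delicate points to be, first, pinning down the normalization $\int_{D_{E(t)}}\omega(t)=2\pi\,l(E(t))$ so that the factor matches Theorem \ref{thm-candom} exactly, and second, making sense of both sides when $S(t)$ is singular: the endpoints $v_\pm$ of $E(t)$ are $A_n$-points, so $D_{E(t)}$ may pass through quotient singularities. Here one uses that these are Gorenstein quotient singularities, for which $K_{S(t)}$ is well defined and the intersection pairing is available over $\Q$ (as in the footnote to Remark \ref{angle-sintersect}), while $\omega(t)$ is defined on the smooth locus and its integral over $D_{E(t)}$ is still $2\pi\, l(E(t))$. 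Once these inputs are in place, the argument is the one-line differentiation above.
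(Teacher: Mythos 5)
Your argument is correct, but it follows a genuinely different route from the paper's. The paper proves this proposition by a static intersection-theoretic computation: it represents the Poincar\'e dual of $K_{S(t)}$ by (minus) the union of boundary divisors, so that $K_{S(t)}([E(t)])$ becomes the self-intersection $[D_E].[D_E]$ plus the intersection numbers with the two adjacent toric divisors; it evaluates the self-intersection through the double tropical angle (Remark \ref{angle-sintersect}), accounts for the canonical singularities of determinant $n_\pm$ at the endpoints via the correction \eqref{eq-corr} and the fractional intersection numbers $\frac1{n_+}+\frac1{n_-}$, and then concludes by matching the result with $-\tr(R_-,R,R_+)=l'(E(t))$ from Proposition \ref{prop-lprime}. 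You instead take the dynamic route: the Delzant-type identity $\int_{D_{E(t)}}\omega(t)=2\pi\, l(E(t))$, differentiated at a non-critical time where $S(t)$ and the class $[E(t)]$ are locally constant, combined with the evolution equation \eqref{eq_canonicalevolution} of Theorem \ref{thm-candom}. There is no circularity here, since the paper's proof of Theorem \ref{thm-candom} nowhere uses the present proposition. Your approach buys brevity and a conceptual reading of the statement --- it is exactly the evaluation of the class-level equation $\frac{d}{dt}[\omega(t)]=2\pi K_{S(t)}$ on a single edge class --- and it sidesteps all the sign-sensitive double-angle bookkeeping; its cost is the external input you correctly flag, the area--length normalization on a toric surface with $A_n$ singularities, which is precisely where the paper's fractional corrections $\frac1{n_\pm}$ are hidden. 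Note that this input can be extracted from the paper itself: the proof of Theorem \ref{thm-candom} states that the $\omega(t)$-area of a class $\sum_j a_je_{\lambda_j}$ is $2\pi\sum_j a_jc_j(t)$, so expressing $[E(t)]$ in the basis $e_{\lambda_j}$ (via its intersection numbers with the boundary divisors) yields your normalization $\int_{D_{E(t)}}\omega(t)=2\pi\,l(E(t))$; making that step explicit would render your proof self-contained relative to the paper.
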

\begin{proof}
Recall that the Poincar\'e dual \footnote{The toric surface $S(t)$ has quotient singularities (by local action of a finite group), and thus we have the Poincar\'e duality over $\Q$. For general toric surfaces, the canonical class is defined only as a cohomological class with $\Q$-coefficients while its Poincar\'e dual is always a homology class with $\Z$-coefficients (represented by the union of boundary divisors). However, in our case we also have $K_{S(t)}\in H^2(S(t);\Z)$ since by Theorem \ref{thm-candom}, $S(t)$ only has $A_n$-type singularities.} to the canonical class $K_{S(t)}$ is represented by the union $D_{\dd\Phi(t)}$ of the boundary divisors of the toric surface.
Thus we may compute $K_{S(t)}([E(t)])$ as the ($\Q$-valued) intersection number 
of $D_{\dd\Phi(t)}$
and the toric divisor $D_{E}$ corresponding to $E(t)$,
i.e. the self-intersection $[D_E].[D_E]$ plus the intersection numbers of $D_E$ with the two adjacent toric divisors.
Denote the rays in $\mr$ dual to these adjacent divisors by $M_\pm$ and the primitive integer vectors in these directions by $\mu_\pm$.

By Remark \ref{angle-sintersect}, 
we have
$$[D_E].[D_E]=-\tr(M_-,R,M_+)$$
Suppose that the two endpoints of $E(t)$ correspond to canonical singularities of $\Phi(t)$ of determinant $n_+$ and $n_-$.
Then we have $n_\pm\nu_\pm=\nu-\mu_\pm,$
and thus 
\begin{equation}\label{eq-corr}
\tr(M_-,R,M_+)=\tr(R_-,R,R_+)-(\frac1{n_+}+\frac1{n_-}).
\end{equation}
In the same time,
the intersection number of $D_E$ with the adjacent toric divisors is $\frac1{n_+}+\frac1{n_-}$, and thus the right-hand side of \eqref{eq-corr} is equal to $-K_{S(t)}([E(t)])$. The left-hand side is $-l'(E(t))$ by Proposition \ref{prop-lprime}.
\end{proof}

\section{Particle trajectory interpretation}
Suppose that $\Phi$ is a polygonal domain with rational slopes. Locally, near its every vertex, $\Phi$ is a cone, and thus its caustic is locally a finite union of ray emanating from the vertex. 
Let us imagine that each caustic ray of weight $w$ is the trajectory of a particle of weight $w$, emitted from this vertex, and moving with the velocity
given by the primitive integer vector parallel to this ray.
Then some of these particles may start to collide.
If the sum of all the momenta (velocities times the weight) of the colliding particle is zero then they annihilate each other,
and disappear. 
Otherwise, they collide into a single new particle whose momentum is the sum of the momenta of the colliding particles, all according to the balancing condition which is seen as the conservation of momentum law. It moves with the velocity given by the primitive integer vector parallel to the combined momentum.

This particle model describes the evolution of vertices of polygonal domains $\Phi(t)$ under the tropical wave front propagation.
Namely, we have the following statement. Recall that the final time $t_\Phi$ is the supremal time $t>0$ such that $\Phi(t)$ is a domain with non-zero interior. At time $t_\Phi$ the wave front evolution stop and $\Phi(t)=\emptyset$ if $t>t_\Phi$.  
\begin{proposition}\label{part-interpr}
At a time $t<t_\Phi$ a particle in the process described above corresponds to a vertex of the polygonal domain $\Phi(t)$. The weight of the particle is $n+1$ for a vertex of the type $A_n$.
\end{proposition}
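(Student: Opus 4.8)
The plan is to argue locally first and then propagate the particle--vertex correspondence through the critical times by induction. By the Huygens principle (Theorem \ref{prop_additivefront}) it suffices to understand, for each $t_0\in(0,t_\Phi)$ and small $s>0$, how the vertices of $\Phi(t_0)$ evolve into those of $\Phi(t_0+s)$; between two consecutive critical times the dual fan of $\Phi(t)$ is constant, so each vertex moves along a straight segment. For the very first instants I would use that near a vertex $v$ of $\Phi$ the domain coincides with a cone $\Sigma_v$, whose caustic is described by Proposition \ref{prop_causticofcone} and Corollary \ref{cone-subdiv}: it is a finite fan of rays subdividing $\Sigma_v$ into tropical right angles, the ray of weight $w$ being orthogonal to a side of lattice length $w$ of $\mathcal{P}_{\Sigma_v}$. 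These rays are exactly the trajectories along which the vertices of $\Phi(t)$ are emitted from $v$, one vertex per ray, which sets up the initial bijection between particles and vertices.

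Second, I would prove a velocity lemma: a vertex of $\Phi(t)$ of type $A_n$ travels with velocity equal to the primitive integer vector parallel to its caustic ray, and its momentum (weight times velocity) equals $\nu_++\nu_-$, the sum of the two primitive generators of the local cone of $\Phi(t)$ at the vertex. Writing $\lambda,\mu\in M$ for the outward conormals of the two sides meeting at the vertex, the vertex $v(t)$ satisfies $\lambda(v(t))=a^\lambda_\Phi-t$ and $\mu(v(t))=a^\mu_\Phi-t$, so differentiating yields a velocity $u$ with $\lambda(u)=\mu(u)=-1$. Since the cone generated by $\lambda,\mu$ is the $A_n$-cone of $\fan(\Phi(t))$ (Theorem \ref{fan-description}), the local cone at the vertex is canonical, its caustic is the single bissectrice ray of weight $n+1$ (Corollary \ref{An-single}) directed along $\nu_++\nu_-$ (Proposition \ref{perp-descr}); in the canonical normal form one computes $\lambda(\nu_++\nu_-)=-(n+1)$ and $\nu_++\nu_-=(n+1)w$ for a primitive $w$, so $u=w$. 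This matches both the weight $n+1$ and the prescribed primitive velocity of the model, and records the clean identity momentum $=\nu_++\nu_-$.

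Third, I would analyse the collisions at the critical times. When a side $E(t)$ of $\Phi(t)$ shrinks to a point its two endpoints merge, which is precisely a vertex of the tropical curve $\kf$ where two caustic edges arrive. By Theorem \ref{caustic-trop-curve} the caustic is balanced at this vertex, and under the identification momentum $=$ weight $\times$ primitive direction from the velocity lemma, the balancing relation is literally conservation of momentum. I would then split into the two cases of the model: if the two incoming momenta cancel, the neighbouring sides of $\Phi(t)$ become collinear and no vertex survives (annihilation); otherwise the balancing produces a single outgoing caustic edge whose weighted primitive direction is the sum of the incoming momenta, and I would verify that the emerging point is again an $A_m$ vertex of $\Phi(t)$ whose velocity is the primitive vector along that edge (merge). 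Induction on the finitely many critical times in any compact time interval then yields the correspondence for all $t<t_\Phi$.

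The main obstacle is this collision step: one must check that the purely combinatorial collision/annihilation rule reproduces the actual local change of the polygon $\Phi(t)$ at a critical time, in particular that after a merge the new point is a genuine canonical vertex of the evolved polygon with the predicted type $A_m$ and the predicted velocity. The velocity lemma reduces this to the balancing of the weighted primitive directions, which is supplied by Theorem \ref{caustic-trop-curve}, but the bookkeeping of the local cone of $\Phi(t)$ before and after the collision, together with the clean separation into the annihilation and merging cases, is where the real care is needed.
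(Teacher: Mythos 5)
Your proposal is correct, but it takes the long way around compared to the paper, whose entire proof is the single sentence ``The proposition is a restatement of Corollary \ref{An-single}.'' The paper's logic is top-down and essentially definitional: the caustic is \emph{defined} (Definition \ref{def_caustic}) as the locus of vertices of $\Phi(t)$, with weight $n+1$ prescribed at $A_n$-vertices; Theorem \ref{fan-description} says that every vertex of $\Phi(t)$, $0<t<t_\Phi$, is canonical; and Corollary \ref{An-single} says that a canonical cone has exactly one caustic ray, of weight $n+1$. Hence ``particle $=$ vertex, weight $=n+1$'' is little more than unwinding definitions, with the collision rules already encoded in the fact that $\mathcal{K}_\Phi$ is a balanced tropical curve. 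Your proof is bottom-up: you verify that the dynamics of the model actually reproduces the evolution of the polygon, via the velocity lemma (the vertex velocity $u$ solves $\lambda(u)=\mu(u)=-1$, hence is the primitive vector along the bissectrice, with momentum $(n+1)u=\nu_++\nu_-$) and conservation of momentum at collisions (the balancing of $\mathcal{K}_\Phi$, Theorem \ref{caustic-trop-curve}). Your computation is correct, and the identity ``momentum $=\nu_++\nu_-$,'' which visibly telescopes when adjacent dual cones merge, is a clean independent way to see the conservation law; the price is an induction over critical times that the paper avoids entirely.

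One remark: the step you single out as the ``main obstacle'' --- checking that after an interim collision the emerging point is a genuine $A_m$-vertex with a single outgoing caustic ray of the predicted weight and velocity --- requires no delicate bookkeeping. At the collision time $t_0$ the collision point is a vertex of $\Phi(t_0)$, and $\Phi(t_0)$ is canonical by Theorem \ref{fan-description} applied at $t_0$ itself; by Huygens' principle (Theorem \ref{prop_additivefront}) the subsequent evolution is locally the evolution of this canonical cone, so Corollary \ref{An-single} together with your velocity lemma immediately yields a single outgoing vertex moving along the bissectrice with weight $m+1$. This observation is exactly what lets the paper dispose of the proposition in one line.
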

The proposition is a restatement of Corollary \ref{An-single}.

We call a particle collision at a time $t=t_\Phi$ {\em final} as at a time $t>t_\Phi$ we have $\Phi(t)=\emptyset$. We call collisions at time $t<t_\Phi$ {\em non-final} or {\em interim}. 

\begin{theorem}\label{thm_nonfinal}
There is only one type of interim particle collisions, it involves exactly two particles, one of weight 1, another of arbitrary weight $n\ge 1$, and results in a new weight 1 particle. 

For each $n$ there is a unique type of an interim collision (up to automorphisms of the lattice $N$ and translations in $\nr$), it is depicted on Figure \ref{fig_nonterminal}. 
\end{theorem}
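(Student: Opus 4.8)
The plan is to analyze interim collisions via the particle trajectory interpretation (Proposition \ref{part-interpr}) combined with the structural result about fan evolution from Theorem \ref{fan-description}. The key observation is that an interim collision at time $t<t_\Phi$ corresponds exactly to a \emph{critical time} in the sense of Definition \ref{def_criticaltimes}, where $\fan(\Phi(t-\varepsilon))$ properly contains $\fan(\Phi(t+\varepsilon))$: a side of $\Phi$ shrinks to zero length, its dual ray disappears from the fan, and the two adjacent vertices merge into one. So first I would translate the combinatorial question ``what interim collisions are possible'' into the question ``how can the dual fan change at a critical time,'' which by Theorem \ref{fan-description} is governed entirely by the fact that every cone of $\fan(\Phi(t))$ is an $A_n$-cone for all $t\in(0,t_\Phi)$.

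The main structural step is to show that the disappearing ray must be flanked on \emph{both} sides, in the new fan $\fan(\Phi(t+\varepsilon))$, by cones that are again $A_m$-cones (canonical-dual), and to extract the arithmetic constraint this imposes. Concretely, suppose a side $E$ of slope $\nu$ (with dual ray $R$) contracts to a point, so $R$ is deleted and the two neighboring rays $R_-,R_+$ (generated by primitive $\mu_-,\mu_+\in M$) become adjacent in the new fan. Before the collision, the cones $\Sigma(R_-,R)$ and $\Sigma(R,R_+)$ are $A_n$- and $A_{n'}$-cones; after the collision, $\Sigma(R_-,R_+)$ must also be an $A_m$-cone by Theorem \ref{fan-description}. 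I would write $\nu=\mu_-+\mu_+ - k\,\nu'$-type relations coming from the $A_n$ condition (which says consecutive primitive generators satisfy $\mu_{j-1}+\mu_{j+1}=c_j\mu_j$ with the $c_j$ controlled by the $A_n$ lengths), and then impose that the merged cone is still of $A_m$-type. The length gradient computation of Proposition \ref{prop-lprime} pins down \emph{which} side contracts and the rate, so that the particle-collision picture (momenta add by the balancing/conservation law described before the theorem) is forced.

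The arithmetic heart of the argument is to prove that, among the two colliding particles (equivalently, the two cones flanking the vanishing ray), one must have weight $1$. I expect this to be the main obstacle. The claim is that two $A_n$- and $A_{n'}$-cones can merge across a deleted ray into a single $A_m$-cone only when $\min(n,n')=0$, i.e.\ one of the flanking cones is a tropical right angle (weight $1$). I would argue this by the determinant/area bookkeeping: the deleted ray $R$ sits inside the merged cone $\Sigma(R_-,R_+)$, and for the merged cone to remain an $A_m$-cone (no interior lattice rays, height one) there can be no lattice point of $M$ in the interior of the triangle $O,\mu_-,\mu_+$ other than those forced to lie on its boundary; a direct lattice-geometry argument shows this rules out $n,n'$ both $\ge 1$. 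The unique resulting weight-$1$ particle then follows because the balancing condition forces the outgoing primitive vector, and a right-angle neighbor keeps the new cone canonical-dual of height one.

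Finally, to establish uniqueness of the collision type for each $n$ up to $\operatorname{Iso}(\et)$, I would normalize by a lattice automorphism so that the weight-$1$ (right-angle) cone has generators $e_1,e_2$ and the weight-$(n+1)$ cone is the standard $A_n$-cone with generators $e_1, e_1+\cdots$; then the entire local picture — the two incoming rays, the vanishing ray, and the outgoing ray — is determined, matching Figure \ref{fig_nonterminal}. The bookkeeping that the outgoing particle has weight exactly $1$ (rather than some larger value) is the place where I would be most careful, checking it against the $d_v$/$d_E$ calculus of Theorem \ref{12n} or directly via the $A_m$-classification.
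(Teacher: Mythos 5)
Your overall strategy --- dualizing the collision to a subdivision of a cone of $\fan(\Phi(t))$ into $A$-type cones and then classifying such subdivisions --- is the same as the paper's, which reduces everything to Lemma \ref{trop-trig}. But as written the proposal has two genuine gaps. The first is that your setup assumes from the start that a collision consists of one side shrinking, one ray of the fan disappearing, and exactly two vertices merging. That ``exactly two particles'' clause is part of what Theorem \ref{thm_nonfinal} asserts, and it is not automatic: simultaneous collapse of several consecutive sides to a single point does occur in this evolution (it is precisely what happens at the final time, e.g.\ for a triangle), so ruling it out at \emph{interim} times is substantive. The paper's proof allows an arbitrary number $l\ge 2$ of merging vertices, dualizes to a subdivision of the cone $\Sigma_v$ into $l$ canonical cones, and lets Lemma \ref{trop-trig} prove $l=2$; your argument never addresses $l\ge 3$.

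The second gap is more serious: the ``arithmetic heart'' of your proposal is false as stated. You claim that if the flanking cones $\Sigma(R_-,R)$ and $\Sigma(R,R_+)$ are of $A$-type and the merged cone $\Sigma(R_-,R_+)$ is again of $A$-type, then one of the flanking cones must be a right angle. Counterexample: take $\mu_-=(0,1)$, $\mu_+=(4,1)$, and the deleted ray through $r=(2,1)$. Both flanking cones are $A_1$-cones (weight-$2$ particles), the merged cone is the $A_3$-cone, and the triangle $O,\mu_-,\mu_+$ has no interior lattice points (it has height $1$), so your stated criterion is satisfied --- yet neither flanking cone is a right angle. What excludes this configuration in the actual evolution is not the $A$-type condition but the \emph{convex position} of the deleted ray: $r$ must be a vertex of the convex hull of $O,\mu_-,r,\mu_+$, i.e.\ lie strictly beyond the segment $[\mu_-,\mu_+]$, because only such sides have negative length gradient and can shrink to a point; in the collinear example above the dual side has constant length and never collapses (cf.\ the sign analysis in the proof of Theorem \ref{fan-description} and Proposition \ref{prop-lprime}). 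This ``convex type'' hypothesis is exactly what the paper extracts from the duality between the merging vertices and the sides of $\Phi(t-\epsilon)$, and it is the hypothesis that makes Lemma \ref{trop-trig} true. Without identifying it your lattice-geometry step cannot be completed, and once you add it you are essentially reproving the paper's lemma, including the $l\ge 3$ case you skipped.
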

\begin{figure}
\begin{tikzpicture}
\node at (0,1.2) {$n$};
\draw[-latex](0,1)--(0,0.5);
\draw(0,0.5)--(0,0);

\node at (-0.5,-1.75) {$1$};
\draw[-latex](-0.5,-1.5)--(-0.25,-0.75);
\draw(-0.25,-0.75)--(0,0);

\node at (1.2,0) {$1$};
\draw[-latex](0,0)--(0.5,0);
\draw(0.5,0)--(1,0);

\begin{scope}[xshift=180,yshift=-25]
\draw(0,0)--(0,1)--(-3,1)--(0,0);

\draw[<->](0.025,1.2)--(-3.025,1.2);
\node at (-1.5,1.4) {$n$};

\node at (1,-1) {$\bullet$};
\node at (1,0) {$\bullet$};
\node at (1,1) {$\bullet$};
\node at (1,2) {$\bullet$};

\node at (0,-1) {$\bullet$};
\node at (0,0) {$\bullet$};
\node at (0,1) {$\bullet$};
\node at (0,2) {$\bullet$};

\node at (-1,-1) {$\bullet$};
\node at (-1,0) {$\bullet$};
\node at (-1,1) {$\bullet$};
\node at (-1,2) {$\bullet$};

\node at (-3,-1) {$\bullet$};
\node at (-3,0) {$\bullet$};
\node at (-3,1) {$\bullet$};
\node at (-3,2) {$\bullet$};

\node at (-2,-1) {$\dots$};
\node at (-2,-0) {$\dots$};

\node at (-2,2) {$\dots$};

\node at (-4,-1) {$\bullet$};
\node at (-4,0) {$\bullet$};
\node at (-4,1) {$\bullet$};
\node at (-4,2) {$\bullet$};
\end{scope}
\end{tikzpicture}
\caption{An interim collision of a particle of weight 1 and a particle of weight $n$ in $\nr$ and the dual diagram in $\mr$.
\label{fig_nonterminal}}
\end{figure}
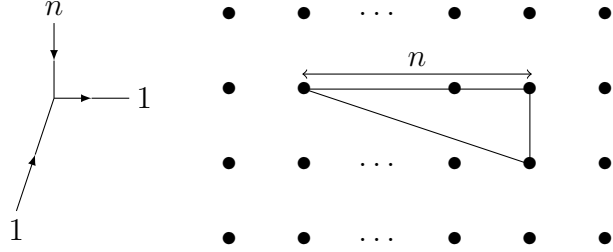
\begin{proof}
Consider a vertex $v$ of the polygonal domain $\Phi(t)$, where $0<t<t_\Phi$ is the time of the collision.
Suppose that some particles have collided at $v$. The collision results in a single particle by Corollary \ref{An-single} and Theorem  \ref{fan-description}.
The particles colliding at $v$ correspond to the vertices of $\Phi(t-\epsilon)$ near $v$ for a small $\epsilon>0$. The dual cones to these vertices subdivide the cone $\Sigma_v$ dual to $\Phi(t)$ at $v$. All of the cones in this subdivision are canonical.
Namely, the cone $\Sigma_v\subset\mr$ is subdivided by the rays spanned by the primitive vectors $r_1,\dots,r_{l-1}\in M$ so that each small cone is canonical. The cone $\Sigma_v$ is itself canonical, and is generated by the vectors $r_0,r_{l}\in M$ dual to the sides of $\Phi(t)$ adjacent to $v$. Furthermore, the subdivision must be {\em of convex type}, i.e. such that the points $O,r_0,\dots,r_l$ are the vertices of its convex hull, since the primitive vectors $r_j$ are dual to the sides of the polygonal domain $\Phi(t-\epsilon)$. 
The following lemma from tropical trigonometry completes the proof of the theorem. 
%
%
\end{proof}

We call a convex cone $\Sigma\in\mr$ {\em rational} if it is generated by two non-collinear vectors from $M$.
\begin{lemma}\label{trop-trig}
Suppose that a rational cone $\Sigma\subset\mr$ is subdivided into $l\ge 2$ $A_{n_j}$-cones, $j=1,\dots l$
so that the subdivision is of convex type. Then $\Sigma$ is of $A_0$-type (i.e. generated by two vectors from a basis of $M$), $l=2$ and either $n_1=1$ or $n_2=1$. Furthermore, such a subdivision is equivalent to the subdivision of the positive quadrant into two cones by the ray spanned by $(1,n)$ for some $n\in\N$.
\end{lemma}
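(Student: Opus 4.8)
The plan is to translate the combinatorial subdivision data into the language of the $\widetilde{\operatorname{SL}}_2(\Z)$-calculus already developed in Corollary \ref{first12} and Theorem \ref{12n}, so that the constraint ``convex subdivision into $A_{n_j}$-cones'' becomes an arithmetic identity on determinants. First I would normalize coordinates: since $\Sigma$ is rational, choose a basis of $M$ so that one generator of $\Sigma$ is $r_0=(0,1)$ (or a convenient primitive vector) and work with the primitive vectors $r_0,r_1,\dots,r_l$ that are, by hypothesis, the vertices of the convex hull of $\{O,r_0,\dots,r_l\}$. The convexity of the subdivision means the $r_j$ turn monotonically, and the $A_{n_j}$-condition on each small cone $\Sigma(r_{j-1},r_j)$ says the width is $1$ and the determinant $|r_{j-1}\wedge r_j|=n_j$.

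The key step is to exploit that each intermediate ray $r_j$ lies on the boundary of the convex hull, not in its interior. This is exactly the situation governed by \eqref{convex-inclusion} in the proof of Theorem \ref{fan-description}: a lattice point strictly inside the triangle $O,r_0,r_l$ forces a further subdivision, whereas convexity places all $r_j$ on the boundary polygon. Concretely, I would show that convexity of the hull forces the determinant $|r_0\wedge r_l|$ of the whole cone $\Sigma$ to satisfy a sharp inequality against $\sum n_j$: writing the $r_j$ in monotone angular order and using bilinearity of $\wedge$, one gets $|r_0\wedge r_l|=\sum_j |r_{j-1}\wedge r_j| - (\text{positive correction from convexity})$, so that $|r_0\wedge r_l| < \sum n_j$ as soon as any intermediate vertex genuinely bends. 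Combining this with the fact that each small cone has width $1$ (no interior lattice points on the ``far'' edge), I would argue the correction term is large enough to pin $\Sigma$ itself down to determinant $1$, i.e.\ to be of $A_0$-type, and to force $l=2$.

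Once $\Sigma$ is of $A_0$-type, $\{r_0,r_l\}$ is a basis of $M$, so I would finish by putting $r_0=(0,1)$, $r_l=(1,0)$ and observing that the unique interior ray $r_1$ must be primitive with $r_1=(a,b)$, $a,b\ge 1$; the $A_{n_1},A_{n_2}$ conditions $|r_0\wedge r_1|=a$ and $|r_1\wedge r_2|=b$ together with width $1$ force $a=1$ or $b=1$ (otherwise $r_1$ would fail to be a vertex of the convex hull, contradicting convexity). This yields $r_1=(1,n)$ after relabeling, giving the stated normal form, the values $n_1=1$ or $n_2=1$, and $l=2$. I would then remark that the height-$1$ (i.e.\ width-$1$) hypothesis baked into each $A_{n_j}$-cone is what rules out $l>2$, since three monotone boundary vertices in a determinant-$1$ cone is impossible.

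The main obstacle I expect is making the convexity-versus-width bookkeeping airtight, in particular proving the strict inequality $|r_0\wedge r_l|<\sum_j n_j$ and extracting from it that $|r_0\wedge r_l|=1$ rather than merely small. The cleanest route is probably to avoid explicit determinant estimates and instead run the argument purely through \eqref{convex-inclusion}: convexity means each $r_j$ is an ``exterior'' point relative to the triangle on its neighbors, so any $A_{n_j}$ with $n_j\ge 2$ produces a lattice point on the boundary of $\Sigma$ that, by width $1$, cannot be refined further, and iterating pins $l=2$. I would lean on Proposition \ref{prop-determ} and Proposition \ref{perp-descr} to recognize the resulting cones, treating the determinant arithmetic as the routine part and the convex-position argument as the conceptual core.
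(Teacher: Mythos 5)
There is a genuine gap, and it sits exactly at the heart of the lemma. Your central step --- showing that $\Sigma$ has determinant $1$ --- is never actually carried out. The inequality you propose, $|r_0\wedge r_l|=\sum_j|r_{j-1}\wedge r_j|-(\text{correction})$, is true (it is just additivity of areas of the triangles $O r_{j-1} r_j$ versus the triangle $O r_0 r_l$), but it is far too weak: it only says the determinant of $\Sigma$ is smaller than the sum of the determinants of the pieces, which is compatible with $\Sigma$ having any determinant whatsoever. You acknowledge this obstacle yourself but offer no way around it. The paper's argument is different and is the idea you are missing: each $A_{n_j}$-cone has the property that its spanning triangle $O r_{j-1} r_j$ contains no lattice points except its vertices and the points on the ``far'' edge $[r_{j-1},r_j]$; since convex position puts every edge $[r_{j-1},r_j]$ on the boundary of the hull $P$ of $O,r_0,\dots,r_l$, and the internal edges $[O,r_j]$ are primitive, the \emph{interior of $P$ contains no lattice points at all}. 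The triangle $O r_0 r_l$ sits inside $P$ with $[r_0,r_l]$ a diagonal of $P$, so it is an empty lattice triangle, hence has area $\tfrac12$, hence $r_0,r_l$ is a basis. This lattice-point-emptiness argument (applied also to unions of adjacent subcones) is what pins the determinant to $1$; no determinant bookkeeping can replace it. Relatedly, you misstate the $A_{n}$-condition: an $A_{n}$-cone has height $1$ and width $n+1$ (determinant $n+1$), not ``width $1$ and determinant $n$'' --- the latter describes the \emph{canonical} cone dual to an $A$-cone, whose spanning triangle \emph{does} contain interior lattice points once the determinant exceeds $2$, so with your definition the emptiness of $P$ would simply be false, and even the model answer (subdivision of the quadrant by $(1,n)$) would violate your ``width $1$'' hypothesis.

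The endgame reasons you give are also wrong, in a way that matters. For $l=2$ with $r_1=(a,b)$, $a,b\ge 2$, you claim $r_1$ ``would fail to be a vertex of the convex hull, contradicting convexity.'' It does not: for instance $(2,3)$ makes $O,(1,0),(2,3),(0,1)$ a genuine convex quadrilateral. What fails is the $A_n$-condition on a subcone: as in the paper, $(1,1)$ lies in the interior of the triangle $O,(1,0),(a,b)$ when $b>a\ge 2$, so that cone is not of $A_n$-type. Similarly, your closing claim that ``three monotone boundary vertices in a determinant-$1$ cone is impossible'' is false as stated: the rays through $(2,1)$ and $(1,2)$ subdivide the positive quadrant with all five points $O,(1,0),(2,1),(1,2),(0,1)$ in strictly convex position. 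Ruling out $l>2$ requires using the first part of the lemma again --- the union of all subcones but the one adjacent to the $x$-axis is an $A_0$-cone, forcing $r_1=(1,n)$ --- and then observing that any further vertex $r_2$ continuing the convex polygon past the vertical edge from $(1,0)$ to $(1,n)$ would need $x$-coordinate strictly between $0$ and $1$, which no lattice point has. In short: your proposal correctly senses that convex position plus lattice-point counting is the mechanism, but the decisive lemma (emptiness of the hull $P$, coming from the height-$1$ structure of $A_n$-cones) is absent, and the steps where you substitute ``convexity'' for the $A_n$-condition would fail on explicit counterexamples.
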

\begin{proof}
Let $r_0,r_l\in M$ be the primitive vectors generating $\Sigma$, while $r_1,\dots,r_{l-1}\in M\cap\Sigma$ be the primitive vectors generating the subdivision. By the hypothesis, the convex hull $P$ of $O,r_0,\dots,r_l$ is a lattice $(l+2)$-gon, whose interior does not contain any lattice points.
In particular, the triangle with vertices $O,r_0, r_l$ does not contain any lattice points except for its vertices. Thus the vectors $r_0$ and $r_l$ form a basis, and therefore $\Sigma$ is of $A_0$-type.

By the same reasoning, any union of two or more adjacent cones in the subdivision is also an $A_0$-cone.
An $A_0$-cone is isomorphic to a positive quadrant in $\R^2$. Its decomposition into two rational cones is given by a single vector $(a,b)\in\Z^2$, $a,b>0$. If $b>a\ge 2$ then $(1,1)$ is in the interior of the triangle with vertices $O=(0,0)$, $(1,0)$ and $(a,b)$, and thus the lower cone is not of $A_n$-type. This implies the lemma if $l=2$.

Suppose that $l>2$. Identifying the cone $\Sigma$ with the positive quadrant in $\R^2$, consider the cone $\Sigma_x$ of the subdivision adjacent to the $x$-axis. By the first part of the proof, the union of all other cones in the subdivision is an $A_0$-cone. Thus $\Sigma_x$ is generated by $(1,0)$ and $(1,n)$ for some $n$. We get a contradiction since there are no lattice points 
in the positive quadrants such that the convex hull of $(0,0)$, $(1,0)$, $(1,n)$ and that point is a quadrilateral, as the $x$-coordinate of such point would have to be strictly between $0$ and $1$.
\end{proof}

\section{Final singularities of tropical caustics}
For an admissible domain $\Phi$, the final time $t_\Phi>0$ may be finite or infinite.
A useful criterion is provided by the {\em residual cone} of $\Phi$.
Let us recall its definition. 
Given a point $p\in\Phi$ we consider the cone $\Sigma(\Phi)$ obtained as the union of the rays $R$ in $\nr$ emanating from the origin, and such that $p+R\subset\Phi$.
By the next lemma, this cone does not depend on the choice of $p$, it is called the {\em residual cone} of $\Phi$.
Clearly, if $\Phi$ is bounded then $\Sigma(\Phi)=\emptyset$.
\begin{lemma}\label{res-indep}
Suppose that $p,q\in\Phi$ and $R\subset\nr$ is a ray centered at the origin.
We have $p+R\subset\Phi$ if and only if $q+R\subset\Phi$.
\end{lemma}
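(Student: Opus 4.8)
The plan is to exploit only convexity and closedness of $\Phi$, since these are the two properties that force a ray direction lying in $\Phi$ from one basepoint to lie in $\Phi$ from every other basepoint. By the symmetry of the statement in $p$ and $q$, it suffices to prove a single implication: assuming $p+R\subset\Phi$, I will show $q+R\subset\Phi$. Writing $R=\{tv\ |\ t\ge 0\}$ for a direction vector $v$, I fix an arbitrary $t\ge 0$ and aim to prove that $q+tv\in\Phi$.

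The key idea is to realize $q+tv$ as a limit of convex combinations of points already known to lie in $\Phi$, letting the contribution from the far-away point on $p+R$ shrink while its distance along $R$ grows. Concretely, for each integer $n\ge 1$ I would consider the point
$$
x_n=\frac1n\,(p+ntv)+\Bigl(1-\frac1n\Bigr)q.
$$
Since $p+ntv\in\Phi$ (because $p+R\subset\Phi$) and $q\in\Phi$, convexity of $\Phi$ gives $x_n\in\Phi$ for every $n$. A direct simplification yields
$$
x_n=q+tv+\frac1n\,(p-q),
$$
so that $x_n\to q+tv$ as $n\to\infty$.

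Finally, because $\Phi$ is closed, the limit of the sequence $(x_n)$ lies in $\Phi$, whence $q+tv\in\Phi$. As $t\ge 0$ was arbitrary, this shows $q+R\subset\Phi$, completing the proof. I do not expect a genuine obstacle here: the statement is precisely the basepoint-independence of the recession (asymptotic) cone of a closed convex set, and the only points requiring care are choosing the combination so that the coefficient of the escaping point tends to $0$ while its displacement along $R$ tends to $tv$, and invoking closedness to pass to the limit. Admissibility of $\Phi$ plays no role beyond ensuring $\Phi$ is closed and convex.
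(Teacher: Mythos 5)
Your proof is correct and follows essentially the same route as the paper: the paper argues that the union of segments from $q$ to points of $p+R$ lies in $\Phi$ by convexity and that its closure contains $q+R$, while you make the same argument explicit by choosing the points $x_n$ on those segments (the convex combination of $q$ with $p+ntv$ at parameter $1/n$) and passing to the limit using closedness. The two arguments are the same in substance, yours just being a more concrete, sequence-based rendering of the paper's closure step.
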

\begin{proof}
Suppose that we have $p+R\subset\Phi$. By convexity of $\Phi$, the union of closed intervals between $q$ and the points of $p+R\subset\Phi$ is contained in $\Phi$. The closure of this union contains $q+R$ and is contained in $\Phi$ since $\Phi$ is closed.
\end{proof}

It might happen that the residual cone $\Sigma(\Phi)\subset\nr$ consists of a finite number of rays.
Since $\Sigma(\Phi)$ is convex, three cases are possible: $\Sigma(\Phi)$ is empty (corresponding to the case when $\Phi$ is bounded), $\Sigma(\Phi)$ is a ray, and $\Sigma(\Phi)$ is a line.
All three cases are characterized by the property that $\Sigma(\Phi)$ is contained in a line in $\nr$.
\begin{proposition}\label{res-preserved}
If $\Sigma(\Phi)$ is not contained in a line then the residual cone is preserved under the tropical wave front evolution, i.e.
$\Sigma(\Phi(t))=\Sigma(\Phi)$, $t>0$.
\end{proposition}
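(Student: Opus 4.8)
The plan is to prove the two inclusions $\Sigma(\Phi(t))\subseteq\Sigma(\Phi)$ and $\Sigma(\Phi)\subseteq\Sigma(\Phi(t))$ separately, both by elementary arguments, and to isolate the hypothesis that $\Sigma(\Phi)$ is not contained in a line as precisely the ingredient needed to guarantee that $\Phi(t)$ stays non-empty (indeed two-dimensional) for all $t>0$. The structural observation driving everything is that, by Lemma~\ref{lem_restorephi} and Definition~\ref{def_continousfront}, both $\Phi=\bigcap_{H\in\mathcal{H}_\Phi}H$ and $\Phi(t)=\bigcap_{H\in\mathcal{H}_\Phi}H(t)$ are intersections of the \emph{same} family of half-planes, and passing from $H$ to $H(t)$ only shifts the constant while leaving the slope $\lambda$ fixed. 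Since the set of recession directions of a half-plane $\{\lambda(p)\le a\}$ is $\{v:\lambda(v)\le 0\}$, which does not depend on $a$, the two residual cones ought to coincide.

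The inclusion $\Sigma(\Phi(t))\subseteq\Sigma(\Phi)$ is immediate from monotonicity. By the earlier lemma, $\Phi(t)$ is contained in the interior of $\Phi$ for $t>0$; so if $v\in\Sigma(\Phi(t))$ and we pick any $p\in\Phi(t)\subseteq\Phi$, then $p+sv\in\Phi(t)\subseteq\Phi$ for all $s\ge 0$, whence $v\in\Sigma(\Phi)$ using the basepoint-independence of the residual cone (Lemma~\ref{res-indep}).

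For the reverse inclusion $\Sigma(\Phi)\subseteq\Sigma(\Phi(t))$ I would argue directly on support half-planes. If $v\in\Sigma(\Phi)$, then for every support slope $\lambda$ of $\Phi$ the requirement $\lambda(p+sv)\le a^\lambda_\Phi$ for all $s\ge 0$ forces $\lambda(v)\le 0$. Now take any $p\in\Phi(t)$; by definition $\lambda(p)\le a^\lambda_\Phi-t$ for every such $\lambda$, so $\lambda(p+sv)=\lambda(p)+s\lambda(v)\le a^\lambda_\Phi-t$ for all $s\ge 0$. Thus $p+sv$ lies in every $H(t)$, $H\in\mathcal{H}_\Phi$, hence in $\Phi(t)$, giving $v\in\Sigma(\Phi(t))$. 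Note that this step is vacuous unless $\Phi(t)\ne\emptyset$, which is exactly where the hypothesis must enter.

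The main obstacle is therefore the non-emptiness of $\Phi(t)$ for all $t>0$, i.e. $t_\Phi=\infty$. Here I would use that $\Sigma(\Phi)$ is a convex cone in $\nr$ which, not being contained in a line, is two-dimensional and so has an interior direction $v_0$. The key technical estimate is a \emph{uniform} bound $\lambda(v_0)\le-\delta<0$ valid for all primitive support slopes $\lambda$ simultaneously: the unit vectors $\lambda/\|\lambda\|$ lie in the polar cone of the full-dimensional cone $\Sigma(\Phi)$, which is a closed arc strictly shorter than a semicircle, and on this compact arc the continuous function $u\mapsto u(v_0)$ is strictly negative, hence bounded above by some $-\delta$; since $\|\lambda\|\ge 1$ for a primitive integer covector, $\lambda(v_0)\le-\delta$ follows. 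Fixing $p_0\in\Phi$ and using $a^\lambda_\Phi-\lambda(p_0)\ge 0$, Definition~\ref{def_series} then gives $\mathcal{F}_\Phi(p_0+sv_0)=\inf_\lambda\big(a^\lambda_\Phi-\lambda(p_0)-s\lambda(v_0)\big)\ge s\delta\to\infty$, so by Lemma~\ref{lem_levelsets} the domain $\Phi(t)$ has non-empty interior for every $t>0$. Establishing this uniform gap $\delta$ (as opposed to a slope-by-slope strict inequality) is the only delicate point; everything else is formal, and together the three steps yield $\Sigma(\Phi(t))=\Sigma(\Phi)$.
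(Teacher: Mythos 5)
Your proof is correct, and it reaches the conclusion by a genuinely different route than the paper's. The easy inclusion $\Sigma(\Phi(t))\subset\Sigma(\Phi)$ is handled the same way in both. For the reverse inclusion, however, the paper never argues on support slopes: it introduces the Euclidean metric in which an integer basis of $N$ is orthonormal and compares the tropical evolution $H(t)$ of each support half-plane with its Euclidean evolution $H^E(t)$, the shift of $\dd H$ by Euclidean distance $t$. Since a primitive covector $\lambda\in M$ has Euclidean length at least $1$, the line $\dd H(t)$ lies at Euclidean distance $t/\|\lambda\|\le t$ from $\dd H$, so $H(t)$ contains the inward Euclidean shift of $H$; consequently $\Phi(t)$ contains the Euclidean inner parallel body $\{p:\operatorname{dist}(p,\nr\setminus\Phi)\ge t\}$, which is non-empty and has residual cone equal to $\Sigma(\Phi)$ precisely because $\Sigma(\Phi)$ is two-dimensional. (When you compare with the paper, note that the inclusion actually needed, and the one that follows from $\|\lambda\|\ge 1$, is $H(t)\supset H^E(t)$ for the inward shift; the displayed formula there is written with the opposite orientation.) The paper thus disposes of non-emptiness and the cone inclusion in one stroke, at the price of leaving implicit the fact that Euclidean inward evolution of a convex domain with full-dimensional recession cone never empties it and preserves that cone. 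Your argument instead stays within the support-slope formalism of Definition~\ref{def_series}: the reverse inclusion is a two-line computation modulo $\Phi(t)\neq\emptyset$, and non-emptiness comes from the uniform gap $\lambda(v_0)\le-\delta$, obtained by compactness of the unit arc of the polar cone of $\Sigma(\Phi)$ together with $\|\lambda\|\ge1$, giving $\mathcal{F}_\Phi(p_0+sv_0)\ge s\delta$ and then invoking Lemma~\ref{lem_levelsets}. This is longer, but it makes explicit exactly where the hypothesis enters, and it yields a quantitative by-product the paper's proof does not: a linear lower bound on $\mathcal{F}_\Phi$ along any interior direction of $\Sigma(\Phi)$, hence an explicit certificate that $t_\Phi=\infty$.
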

\begin{proof}
Clearly, we have $\Sigma(\Phi(t))\subset\Sigma(\Phi)$.
Let us turn the tropical plane $\nr$ into the Euclidean plane by introducing a metric where an integer basis of $N$ becomes orthonormal. 
Then for each rational slope half-plane $H\subset\nr$ we can compare its tropical time $t$ evolution $H(t)$ against its Euclidean evolution $H^E(t)$ obtained by stepping out distance $t$ in the direction orthogonal to $\dd H$.
Since the length of any integer vector in $N$ is at least 1, we have 
$$H(t)\subset H^E(t),$$
and thus $\Sigma(\Phi(t))\supset\Sigma(\Phi)$.
\end{proof}

\begin{corollary}
If $\Sigma(\Phi)$ is not contained in a line then $t_\Phi=\infty$. 
\end{corollary}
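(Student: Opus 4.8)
The plan is to deduce the corollary directly from Proposition \ref{res-preserved}, extracting the hypothesis through the trichotomy established just above it. First I would unpack what ``$\Sigma(\Phi)$ not contained in a line'' means: by that trichotomy, the only convex residual cones contained in a line are the empty set, a single ray, and a full line, so the negation forces $\Sigma(\Phi)$ to contain two linearly independent rays. Being convex, $\Sigma(\Phi)$ then contains the entire planar sector they span, and in particular has nonempty interior in $\nr$.

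Next I would invoke Proposition \ref{res-preserved}: since $\Sigma(\Phi)$ is not contained in a line, $\Sigma(\Phi(t))=\Sigma(\Phi)$ for every $t>0$. Recording this serves two purposes. It guarantees that $\Phi(t)$ is nonempty for all $t>0$ (a nontrivial residual cone presupposes a point of the domain from which to emanate the recession rays, so the equality $\Sigma(\Phi(t))=\Sigma(\Phi)$ cannot hold for an empty $\Phi(t)$), and it pins down the recession directions exactly. Picking any $p\in\Phi(t)$, the definition of the residual cone gives $p+\Sigma(\Phi(t))=p+\Sigma(\Phi)\subset\Phi(t)$.

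The conclusion then follows: since $\Sigma(\Phi)$ has nonempty interior, so does the translate $p+\Sigma(\Phi)$, and hence $\Phi(t)$ has nonempty interior for every $t>0$. By Definition \ref{def_criticaltimes}, $t_\Phi$ is the minimal time at which $\Phi(t)$ acquires empty interior; as no such time exists, $t_\Phi=\infty$.

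I expect no genuine obstacle here, as the substantive content is carried entirely by Proposition \ref{res-preserved}. The one point deserving explicit attention is the logical step that preservation of a full-dimensional residual cone forces $\Phi(t)$ itself to remain nonempty and full-dimensional for all time, rather than collapsing as $t$ grows; this is precisely what the containment $p+\Sigma(\Phi)\subset\Phi(t)$ rules out, so I would state it explicitly rather than leave it implicit.
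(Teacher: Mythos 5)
Your proof is correct and follows exactly the route the paper intends: the paper states this corollary without proof as an immediate consequence of Proposition \ref{res-preserved}, and your argument (full-dimensional residual cone is preserved, a translate $p+\Sigma(\Phi)$ sits inside $\Phi(t)$, hence $\Phi(t)$ has nonempty interior for all $t$, so $t_\Phi=\infty$ by Definition \ref{def_criticaltimes}) is precisely the intended deduction. Your explicit attention to why $\Phi(t)$ cannot be empty is a reasonable filling-in of a detail the paper leaves implicit, not a deviation.
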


If $\Phi$ is bounded then the caustic of $\Phi(\epsilon)$ is a finite bounded graph, $\epsilon>0$. Thus we get the following proposition.
\begin{proposition}
If $\Phi\subset\nr$ is bounded then $t_\Phi$ is finite.
\end{proposition}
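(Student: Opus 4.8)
The plan is to reduce the finiteness of $t_\Phi$ to the compactness of $\Phi$ through the tropical series $\mathcal{F}_\Phi$ of Definition \ref{def_series}. First I would reinterpret the final time in terms of $\mathcal{F}_\Phi$. By Lemma \ref{lem_levelsets} we have $\mathcal{F}_\Phi^{-1}(t)=\partial\Phi(t)$, and since $\mathcal{F}_\Phi(p)$ is the time at which the front reaches $p$, the domain $\Phi(t)$ coincides with the superlevel set $\{p\in\Phi:\mathcal{F}_\Phi(p)\ge t\}$. Consequently $\Phi(t)$ has non-empty interior precisely when $\{p\in\Phi:\mathcal{F}_\Phi(p)>t\}\neq\emptyset$, which gives the identity $t_\Phi=\sup_{p\in\Phi}\mathcal{F}_\Phi(p)$.

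It then remains to show that this supremum is finite, and here I would use two elementary properties of $\mathcal{F}_\Phi$. Finiteness of its values is built into the definition: each term $a^\lambda_\Phi-\lambda(p)$ is non-negative, since $a^\lambda_\Phi=\sup_\Phi\lambda\ge\lambda(p)$, so that $\mathcal{F}_\Phi(p)\in[0,\infty)$ for every $p$. Upper semicontinuity follows because $\mathcal{F}_\Phi$ is an infimum of the affine, hence continuous, functions $p\mapsto a^\lambda_\Phi-\lambda(p)$. As $\Phi$ is bounded and closed, it is compact, and an upper semicontinuous finite-valued function on a compact set attains its maximum; thus $t_\Phi=\max_{p\in\Phi}\mathcal{F}_\Phi(p)<\infty$.

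A second route, matching the remark preceding the statement, proceeds by monotonicity and comparison: for bounded admissible $\Phi\subset\Psi$ one checks $\Phi(t)\subset\Psi(t)$, because every support half-plane of $\Psi$ of slope $\lambda$ contains the parallel support half-plane of $\Phi$, an inclusion preserved under the shift \eqref{eq_halfplaneshift}; enclosing $\Phi$ in a large lattice rectangle $P$ whose evolution collapses at an explicit finite time (read off from Lemma \ref{lem_qhalfplaneshift}) yields $t_\Phi\le t_P<\infty$. Equivalently, by Corollary \ref{cor_finitenmbrofsides} the domain $\Phi(\epsilon)$ is a finite polygon with bounded caustic, so its evolution terminates in finite time, and the Huygens principle (Theorem \ref{prop_additivefront}) gives $t_\Phi=\epsilon+t_{\Phi(\epsilon)}<\infty$. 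The only point requiring care, and the main obstacle such as it is, is the passage $t_\Phi=\sup_\Phi\mathcal{F}_\Phi$, namely confirming that empty interior of $\Phi(t)$ is detected by the \emph{superlevel} sets of $\mathcal{F}_\Phi$ rather than by its level sets; once this is pinned down, finiteness is immediate from compactness, with no analysis of critical times or particle collisions needed.
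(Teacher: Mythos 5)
Your proposal is correct, and its main routes are genuinely different from the paper's own argument, which is a one-liner: for bounded $\Phi$ the caustic of $\Phi(\epsilon)$, $\epsilon>0$, is a finite bounded graph, hence the evolution must terminate in finite time (this is precisely your parenthetical variant via Corollary \ref{cor_finitenmbrofsides} and Theorem \ref{prop_additivefront}). Of your two routes, the comparison argument is the most elementary and self-contained: the monotonicity $\Phi\subset\Psi\Rightarrow\Phi(t)\subset\Psi(t)$ does hold termwise on support half-planes exactly as you say, and in fact you do not even need the auxiliary rectangle. It suffices to take two opposite rational-slope support half-planes of $\Phi$ itself, of slopes $\lambda$ and $-\lambda$: their time-$t$ evolutions are disjoint as soon as $2t>\sup_\Phi\lambda-\inf_\Phi\lambda$, which gives $t_\Phi\le\frac12\bigl(\sup_\Phi\lambda-\inf_\Phi\lambda\bigr)<\infty$ with no compactness, caustic, or Huygens machinery at all. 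This buys a completely elementary proof, whereas the paper's proof leans on the structure theory of caustics developed in Part III.

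Your first route (via $\mathcal{F}_\Phi$) is also sound in outline, but the step you yourself flag as ``the only point requiring care'' is the actual crux and should be filled in, since it is not a formal property of superlevel sets: a function can be constantly equal to $t$ on a ball, so nonempty interior of $\{\mathcal{F}_\Phi\ge t\}$ does not abstractly force $\{\mathcal{F}_\Phi>t\}\neq\emptyset$. What makes it work here is discreteness of the lattice $M$. Concretely, fix a Euclidean metric; if $B_\epsilon(p)\subset\Phi(t)$ then for every primitive $\lambda\in M$, evaluating $\lambda$ at the point of $B_\epsilon(p)$ where it is maximal gives $a^\lambda_\Phi-\lambda(p)\ge t+\epsilon\|\lambda\|\ge t+\epsilon c_0$, where $c_0=\min\{\|\lambda\|:\lambda\in M\setminus\{0\}\}>0$; hence $\mathcal{F}_\Phi(p)\ge t+\epsilon c_0>t$. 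With this in hand, your upper semicontinuity and compactness argument indeed yields $t_\Phi\le\max_\Phi\mathcal{F}_\Phi<\infty$. Note also that only this one implication is needed for finiteness: the reverse implication (and hence the full identity $t_\Phi=\sup_\Phi\mathcal{F}_\Phi$) requires in addition that a degenerate $\Phi(t)$ dies instantly, which one can get from Huygens' principle or from concavity of $\mathcal{F}_\Phi$, but you can bypass it entirely for the statement at hand.
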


\begin{remark}\label{rem-strip}
It is easy to see (cf. the proof of Lemma \ref{res-indep}) that if $\Sigma(\Phi)$ is a line then $\Phi$ is an infinite strip parallel to a line $L\subset\nr$ with a rational slope. In this case $t_\Phi$ is finite and $\Phi(t_\Phi)$ is a line parallel to $L$.

In the case when $\Sigma(\phi)$ is a single ray then we may have $t_\Phi<\infty$ (e.g. if $\Phi=[0,1]\times\R_{\ge 0}\subset\R^2$), but also $t=\infty$ (e.g. if $\Phi=\{(x,y)\in\R^2\ |\ y\ge x^2\}$).
\end{remark}

If $t_\Phi<\infty$ then we consider the locus $\Phi(t_\Phi)\subset\kf$.
We treat all points of $\Phi(t_\Phi)$ as singular points, and call then final singularities of $\kf$.

\begin{proposition}\label{finallocusclass}
If $\Phi\subset\nr$ is bounded then $\Phi(t_\Phi)$ is either a point or an interval with a rational slope. Otherwise, if $t_\Phi<\infty$ then $\Phi(t_\Phi)$ is either a line or a ray with a rational slope.
\end{proposition}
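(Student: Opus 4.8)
The plan is to study the final locus through the tropical series $\mathcal{F}_\Phi$ of Definition~\ref{def_series}. Since each term $a^\lambda_\Phi-\lambda(p)$ is affine in $p$, the function $\mathcal{F}_\Phi$ is concave, and by Lemma~\ref{lem_levelsets} together with the definition of $t_\Phi$ (Definition~\ref{def_criticaltimes}) the final locus is exactly the set where $\mathcal{F}_\Phi$ attains its maximal value $t_\Phi$, namely $\Phi(t_\Phi)=\{\,p\in\Phi\ :\ \mathcal{F}_\Phi(p)=t_\Phi\,\}$. Being the argmax of a concave function, this set is convex; by the definition of $t_\Phi$ it has empty interior, so it is contained in a line $L\subset\nr$ and is therefore a point, a segment, a ray, or the whole line. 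First I would split into the bounded and unbounded cases: if $\Phi$ is bounded then $\Phi(t_\Phi)$ is bounded, so it can only be a point or a segment, whereas if $\Phi$ is unbounded and $t_\Phi<\infty$ then, by the corollary to Proposition~\ref{res-preserved}, the residual cone $\Sigma(\Phi)$ is contained in a line, hence is a ray or a line.

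The crux is to show that whenever $\Phi(t_\Phi)$ is one-dimensional the line $L$ has rational slope. Pick a point $p$ in the relative interior of $\Phi(t_\Phi)$ and let $d$ be a direction vector of $L$. Since $\Phi(t_\Phi)\subset\mathcal{K}_\Phi\subset\Phi^\circ$, the analysis in the proof of Proposition~\ref{prop_positiveshiftispolygon} shows that only finitely many primitive $\lambda\in M$ are active at $p$, i.e.\ realise the minimum $\mathcal{F}_\Phi(p)=\min_\lambda(a^\lambda_\Phi-\lambda(p))=t_\Phi$; let $A$ be this finite nonempty set. For small $|s|$ one has $\mathcal{F}_\Phi(p+sd)=\min_{\lambda\in A}\bigl(t_\Phi-s\,\lambda(d)\bigr)$, so the right and left derivatives of $\mathcal{F}_\Phi$ along $d$ are $-\max_{\lambda\in A}\lambda(d)$ and $-\min_{\lambda\in A}\lambda(d)$. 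As $p$ is a maximum of $\mathcal{F}_\Phi$ and $\mathcal{F}_\Phi\equiv t_\Phi$ along $L$ near $p$, both one-sided derivatives vanish, forcing $\lambda(d)=0$ for every $\lambda\in A$. Choosing any $\lambda_0\in A$ we get $d\in\ker\lambda_0$, which is a rational line because $\lambda_0\in M$ is integral; hence $L$ has rational slope. This directional-derivative computation is the step I expect to be the main obstacle, since it relies on the local finiteness of the minimum and on handling the one-sided derivatives of a concave piecewise-linear function carefully.

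Finally I would pin down the shape in the unbounded case from the residual cone. Writing the support half-planes as $H=\{\lambda\le a^\lambda_\Phi\}$, a point $p$ lies in $\Phi(t_\Phi)$ if and only if $\lambda(p)\le a^\lambda_\Phi-t_\Phi$ for every primitive $\lambda$. If $v$ spans a ray of $\Sigma(\Phi)$ then $\lambda(v)\le 0$ for all such $\lambda$, whence $\lambda(p+sv)\le\lambda(p)\le a^\lambda_\Phi-t_\Phi$ for every $s\ge 0$; thus the whole ray $p+\R_{\ge0}v$ lies in $\Phi(t_\Phi)$, which is therefore unbounded and so a ray or a line. If $\Sigma(\Phi)$ is a ray, then $\operatorname{rec}(\Phi(t_\Phi))\subset\operatorname{rec}(\Phi)=\Sigma(\Phi)$ does not contain $-v$, so $\Phi(t_\Phi)$ cannot be a full line and must be a ray; if $\Sigma(\Phi)$ is a line then applying the inclusion to both $v$ and $-v$ gives $\Phi(t_\Phi)\supseteq L$, hence $\Phi(t_\Phi)=L$ (this also recovers Remark~\ref{rem-strip}). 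In every one-dimensional case the slope is rational by the previous paragraph, which completes the classification: a point or a segment when $\Phi$ is bounded, and a ray or a line when $\Phi$ is unbounded with $t_\Phi<\infty$.
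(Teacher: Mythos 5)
Your overall decomposition agrees with the paper's: convexity plus empty interior gives containment in a line, and your recession-cone analysis of the unbounded case is essentially the paper's own Lemma~\ref{finalbounded} --- the computation showing $\lambda(v)\le 0$ for every support slope $\lambda$ and every direction $v$ of the residual cone, hence $\Phi(t_\Phi)\supset p+\R_{\ge 0}v$, is word for word the paper's argument. Where you genuinely diverge is the rationality of the slope: the paper gets it abstractly, by noting that $\Phi(t_\Phi)$ is contained in the caustic of the polygonal domain $\Phi(t_\Phi-\epsilon)$ and invoking the structure theorem for caustics (Theorem~\ref{caustic-trop-curve}), whereas you unpack it directly from the tropical series $\mathcal{F}_\Phi$ via one-sided directional derivatives. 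Your route is more self-contained and isolates the true mechanism, namely the integrality of the linear parts $\lambda\in M$ of the locally active terms.

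The self-contained route, however, carries a burden that the paper's route avoids, and this is where your proof has a gap. You assert that $\Phi(t_\Phi)=\{p:\mathcal{F}_\Phi(p)=t_\Phi\}$ ``by Lemma~\ref{lem_levelsets} together with the definition of $t_\Phi$''. What actually follows from those is only $\Phi(t_\Phi)=\{p:\mathcal{F}_\Phi(p)\ge t_\Phi\}$; nothing you cite excludes points where $\mathcal{F}_\Phi>t_\Phi$. This matters because your derivative argument requires $\mathcal{F}_\Phi\equiv t_\Phi$ along $L$ near $p$: if instead $\mathcal{F}_\Phi(p)=T>t_\Phi$, then for small $|s|$ the identity $\mathcal{F}_\Phi(p+sd)=\min_{\lambda\in A}(T-s\,\lambda(d))$ gives a quantity close to $T$, so the known inequality $\mathcal{F}_\Phi(p+sd)\ge t_\Phi$ holds automatically for every direction $d$ and yields no constraint whatsoever; the forcing $\lambda(d)=0$ evaporates. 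The fix is short and uses exactly the tools you already invoke: suppose $\mathcal{F}_\Phi(q)=T>t_\Phi$. Then $q\in\Phi(T)$ lies in $\Phi^\circ$ by the lemma following Definition~\ref{def_continousfront}, so by the local finiteness from the proof of Proposition~\ref{prop_positiveshiftispolygon} the function $\mathcal{F}_\Phi$ is, near $q$, the minimum of finitely many affine terms, each of value at least $T$ at $q$. By continuity, for any $t$ with $t_\Phi<t<T$ all these terms exceed $t$ on a Euclidean ball around $q$, so that ball lies in $\Phi(t)=\{\mathcal{F}_\Phi\ge t\}$. But $\Phi(t)\subset\Phi(t_\Phi)$ has empty interior for $t>t_\Phi$ --- a contradiction. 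Hence $\mathcal{F}_\Phi\le t_\Phi$ everywhere, your argmax identification holds, and with this lemma inserted your proof is complete and correct.
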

\begin{proof}
Since $\Phi(t_\Phi)$ is convex and with empty interior, it has to be a straight interval in $\nr$. Furthermore, it has to be contained in the caustic of a polygonal domain $\Phi(t_\Phi-\epsilon)$ for a small $\epsilon>0$. Thus this interval should be of rational slope.
The following lemma completes the proof of the proposition.
\end{proof}

\begin{lemma}\label{finalbounded}
The final locus $\Phi(t_\Phi)$ is bounded if and only if $\Phi$ is bounded.
\end{lemma}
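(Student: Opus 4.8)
\emph{Proposal.} The plan is to deduce the lemma from the single structural fact that the residual cone is \emph{invariant} along the tropical evolution. One direction is immediate: since $\Phi(t)\subset\Phi$ for every $t\ge 0$, if $\Phi$ is bounded then so is $\Phi(t_\Phi)$. For the converse I would exploit that, by Definition~\ref{def_continousfront}, $\Phi(t)=\bigcap_{H\in\mathcal{H}_\Phi}H(t)$ is an intersection of \emph{translates} of the one fixed family $\mathcal{H}_\Phi$: each $H(t)$ is $H$ shifted, hence has the same recession cone as $H$. Since the recession cone of a nonempty intersection of closed convex sets equals the intersection of the recession cones, and since $\Phi=\bigcap_{H\in\mathcal{H}_\Phi}H$ by Lemma~\ref{lem_restorephi}, I obtain $\Sigma(\Phi(t))=\Sigma(\Phi)$ for every $t$ with $\Phi(t)\neq\emptyset$. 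Thus the residual cone is literally constant along the evolution.

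Granting this invariance, the converse splits according to the three possibilities for $\Sigma(\Phi)$. If $\Phi$ is unbounded then $\Sigma(\Phi)\neq\emptyset$, and since $t_\Phi<\infty$ the corollary following Proposition~\ref{res-preserved} forces $\Sigma(\Phi)$ to be contained in a line, so it is either a full line or a single ray. If it is a line, then $\Phi$ is a strip and $\Phi(t_\Phi)$ is a line by Remark~\ref{rem-strip}, hence unbounded. If it is a ray $R$, then evaluating the invariance at $t=t_\Phi$ gives $\Sigma(\Phi(t_\Phi))=R\neq\emptyset$; a nonempty closed convex set with nontrivial recession cone is unbounded, so $\Phi(t_\Phi)$ is unbounded. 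This settles all cases, modulo the nonemptiness point below.

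The one genuinely delicate step, which I expect to be the main obstacle, is that applying the invariance at $t=t_\Phi$ presupposes $\Phi(t_\Phi)\neq\emptyset$, i.e.\ that the collapse at the final time does not send the surviving ray off to infinity. Writing $\Phi(t_\Phi)=\bigcap_{t<t_\Phi}\Phi(t)$ as a nested intersection of nonempty closed convex sets is not by itself enough, since nestedness alone does not force nonemptiness (as the family $\{x\ge n\}$ shows). I would control this using the finiteness of $t_\Phi$: projecting along the ray direction to the transversal line $\nr/R\cong\R$, the images $\pi(\Phi(t))$ form a nested family of nonempty closed intervals, and one must check they remain inside a fixed compact interval; dually, along the ray direction the ``bottom'' of $\Phi(t)$ rises only by a bounded amount in time $t_\Phi<\infty$. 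An unbounded transverse profile (as in the parabola example of Remark~\ref{rem-strip}) would instead force $t_\Phi=\infty$, so under our hypothesis these two boundedness facts hold, and a point of $\bigcap_t\pi(\Phi(t))$ then lifts to a point of $\Phi(t_\Phi)$. Making this transversal boundedness precise is where the argument requires care.
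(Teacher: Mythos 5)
Your proposal follows the paper's proof skeleton almost step for step: the trivial direction from $\Phi(t_\Phi)\subset\Phi$; the trichotomy on the residual cone, with the full-dimensional case excluded by the corollary following Proposition~\ref{res-preserved} and the line case settled by Remark~\ref{rem-strip}; and the ray case reduced to showing that the final locus absorbs translates of $R$. The only real difference is the mechanism in the ray case. The paper argues by hand: if $p\in\Phi(t_\Phi)$, then $p+R\subset\Phi\subset H$ for every $H=\{\lambda\ge c\}\in\mathcal{H}_\Phi$, so $\lambda\ge 0$ on the direction of $R$, hence $H(t_\Phi)=\{\lambda\ge c+t_\Phi\}\supset p+R$, and intersecting over $H$ gives $p+R\subset\Phi(t_\Phi)$. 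Your identity $\Sigma(\Phi(t))=\Sigma(\Phi)$, obtained from ``the recession cone of a nonempty intersection of closed convex sets is the intersection of the recession cones'' applied to $\Phi=\bigcap H$ (Lemma~\ref{lem_restorephi}) and to $\Phi(t)=\bigcap H(t)$, is the packaged form of exactly that computation; it is correct, and it even strengthens Proposition~\ref{res-preserved} by removing the hypothesis that $\Sigma(\Phi)$ is not contained in a line.

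The step you single out as delicate, namely $\Phi(t_\Phi)\neq\emptyset$, is precisely the step the paper does not prove either: its treatment of the ray case opens with ``Suppose that $p\in\Phi(t_\Phi)$'', so it establishes only that the final locus, \emph{if nonempty}, contains a translate of $R$. Once you permit yourself the same hypothesis, your proof is complete to the same standard as the paper's. Moreover, your caution is vindicated in the strongest sense: nonemptiness can actually fail, so the transversal-boundedness argument you sketch cannot be completed. Consider
\[
\Phi=\Bigl\{(x,y)\in\R^2\ :\ 0\le x<1,\ y\ge\frac{1}{1-x}\Bigr\},
\]
a closed, convex, admissible domain whose residual cone is the vertical ray. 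Its rational-slope support half-planes are $\{x\ge 0\}$ and the half-planes $\{ax+by\ge c_{a,b}\}$ with $b\ge 1$; crucially, $\{x\le 1\}$ is \emph{not} a support half-plane, since $\sup_\Phi x=1$ is not attained, so this constraint never propagates. For $t<1$ one checks against every support half-plane that $\Phi(t)$ contains the open set $\{t<x<1,\ y>\frac{1}{1-x}+t+10\}$, so $\Phi(t)$ has nonempty interior; but $\Phi(1)=\emptyset$, because a point of $\Phi(1)$ would satisfy $x\ge 1$ (propagation of $\{x\ge0\}$), while the propagated tangent half-planes $\{-px+qy\ge 2\sqrt{pq}-p+1\}$, $p>q\ge 1$, would then force $y\ge 2\sqrt{p/q}$ for all such $p,q$. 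Hence $t_\Phi=1<\infty$ while $\Phi(t_\Phi)=\emptyset$. In your notation, the projections $\pi(\Phi(t))=[t,1)$ are nested, nonempty and uniformly bounded, yet their intersection is empty --- exactly the half-open failure mode you feared --- and the ``bottom'' of $\Phi(t)$ rises like $\frac{1}{1-t}$, unboundedly in finite time. The upshot is that the lemma must be read with the implicit convention that the final locus is nonempty (the paper uses this convention here and again in Proposition~\ref{finallocusclass}); under that convention your argument and the paper's coincide, and without it the obstruction you identified is not a gap in your proof but a genuine degeneration of the statement itself.
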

\begin{proof}
Since $\Phi(t_\Phi)\subset\Phi$, the final locus must be bounded if the domain $\Phi$ is.
Suppose that $\Phi$ is unbounded, so that $\Sigma(\Phi)\neq\emptyset$. By Proposition \ref{res-preserved}, if $\Sigma(\Phi)$ is not contained in the line then $t_\Phi=\infty$. If $\Sigma(\Phi)$ is a line then $\Phi$ is a strip and $\Phi(t_\Phi)$ is a line by Remark \ref{rem-strip}. It remains to consider the case when $\Sigma(\Phi)$ consists of a single ray $R$.
Suppose that $p\in\Phi(t_\Phi)$. Then for any support half-plane $H$ to $\Phi$ we have $H\supset p+R$ and $H(t_\Phi)\supset p+R$. Thus $\Phi(t_\Phi)\supset p+R$. 
\end{proof}

\begin{proposition}\label{prop-point}
If $\Phi(t_\Phi)$ consists of a single point then this point is a vertex of $\kf$ locally isomorphic to the caustics of one of the 16 lattice polygons with a single lattice point in their interiors. These, so-called {\em reflexive polygons}, are depicted on Fig. \ref{fig_reflexivepoly}, and their caustics concide with the unions of the intervals from the interior point to the vertices (taken with the appropriate weights).
\end{proposition}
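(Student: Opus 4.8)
\emph{Proof proposal.}
The plan is to reduce the statement to a purely local analysis at the final point and then to identify the resulting combinatorial datum with a reflexive polygon. First I would record that, by Lemma \ref{finalbounded}, the hypothesis that $\Phi(t_\Phi)$ is a single point $p_0$ forces $\Phi$ to be bounded, so $\Phi(t)$ is a finite polygon for every $t>0$ by Corollary \ref{cor_finitenmbrofsides}. Since a compact domain has only finitely many critical times, the dual fan $\mathcal{F}=\fan(\Phi(t))$ is constant on some interval $(t_\Phi-\delta,t_\Phi)$. Placing $p_0$ at the origin, the fact that the entire front collapses to $p_0$ means that each side of $\Phi(t)$, lying on $\{\langle\lambda_j,\cdot\rangle=a^{\lambda_j}_\Phi-t\}$ for the primitive generator $\lambda_j\in M$ of the corresponding ray of $\mathcal{F}$, shrinks onto $p_0$; hence $a^{\lambda_j}_\Phi=t_\Phi$ for every $\lambda_j$. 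Consequently, for $t$ close to $t_\Phi$,
\[
\Phi(t)=\bigcap_j\{\langle\lambda_j,\cdot\rangle\le t_\Phi-t\}=(t_\Phi-t)\,P_0,\qquad P_0=\{p\in\nr:\langle\lambda_j,p\rangle\le 1\ \forall j\},
\]
so that the wave fronts are homothetic copies of the fixed polygon $P_0$ shrinking to $p_0$.

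Next I would describe the germ of the caustic. Because $\Phi(t)=(t_\Phi-t)P_0$, each vertex of $\Phi(t)$ moves along the straight ray from $p_0$ through the corresponding vertex of $P_0$, so near $p_0$ the caustic $\kf$ is exactly the union of these rays emanating from $p_0$. By Theorem \ref{fan-description} every cone of $\mathcal{F}$ is an $A_n$-cone; dually, the tangent cone of $P_0$ at each vertex is the canonical cone of determinant $n+1$, so by Corollary \ref{An-single} the caustic carries a single ray of weight $n+1$ at that vertex. Thus the local model of $\kf$ at $p_0$ is completely determined by $\mathcal{F}$, equivalently by the polygon $\Delta=\operatorname{conv}\{\lambda_j\}\subset\mr$, together with these weights.

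The heart of the argument is to show that $\Delta$ is reflexive. Boundedness of $\Phi$ makes $\mathcal{F}$ a complete fan, so $0$ lies in the interior of $\Delta$. For a fixed cone, writing its generators in the normal form $(0,1),(n+1,1)$ of an $A_n$-cone (cf.\ the description of $A_{d-1}$-angles), the corresponding edge of $\Delta$ lies on the line $\{y=1\}$, i.e.\ at lattice distance $1$ from the origin, and has lattice length $n+1$. Since lattice distance is invariant under $\operatorname{GL}_2(\Z)$, every edge of $\Delta$ is at lattice distance $1$ from $0$; this is precisely the defining property of a reflexive polygon, and it forces $0$ to be the unique interior lattice point of $\Delta$. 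Its polar dual $P_0=\Delta^*$ is then reflexive as well, with unique interior lattice point $p_0$, and the previous paragraph identifies $\kf$ near $p_0$ with the union of the segments joining $p_0$ to the vertices of $P_0$, the weight of each segment being the lattice length $n+1$ of the dual edge of $\Delta$.

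Finally I would invoke the classification of reflexive polygons: up to $\operatorname{GL}_2(\Z)$ there are exactly sixteen of them (Fig.\ \ref{fig_reflexivepoly}). Conversely, each reflexive polygon $P_0$ is realized, since its own evolution is $P_0(s)=(1-s)P_0$, which collapses to its interior lattice point at the final time with caustic equal to the cone over its vertices; so all sixteen local models genuinely occur. I expect the main obstacle to be the rigorous justification of the reduction $\Phi(t)=(t_\Phi-t)P_0$ together with the control of which support half-planes remain active near $p_0$, and, above all, the passage from the per-cone $A_n$ normal form to the \emph{global} conclusion that $\Delta$ is reflexive with a single interior lattice point; the final enumeration is then a citation of the standard classification.
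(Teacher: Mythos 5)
Your proof is correct and follows essentially the same route as the paper's own (much terser) argument: Lemma \ref{finalbounded} reduces to a finite polygon $\Phi(t_\Phi-\epsilon)$, Theorem \ref{fan-description} makes it canonical, the convex hull $\Delta$ of the primitive generators of its dual fan is reflexive, and polar duality plus the classification of the sixteen reflexive polygons finishes the proof, with your homothety $\Phi(t)=(t_\Phi-t)P_0$, the weights via Corollary \ref{An-single}, and the realizability of all sixteen local models being correct added detail. The one delicate step --- which you rightly flag, and which the paper also leaves implicit --- is that ``all cones of $A_n$-type'' does \emph{not} by itself make $\Delta$ reflexive for an abstract complete fan, since consecutive generators need not span edges of $\Delta$ (e.g.\ the complete fan with rays through $(0,1),(1,1),(3,2),(2,1),(1,0),(0,-1),(-1,0)$ has only unimodular cones, yet $(1,1)$ lies in the interior of the hull of the generators); in your setting this is rescued precisely by the fact that every ray of $\fan(\Phi(t))$ is the normal of an actual side of $\Phi(t)=(t_\Phi-t)P_0$, so that $\Delta=P_0^{*}$, the edges of $\Delta$ are dual to the vertices of $P_0$, and those vertices are primitive lattice points exactly because the cones have height $1$.
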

\begin{proof}
By Lemma \ref{finalbounded}, for a small $\epsilon>0$, the domain $\Phi(t_\Phi-\epsilon)$ is a finite polygon. It is canonical by Theorem \ref{fan-description}, and thus the polygon obtained as the convex hull of the primitive vectors in the direction of the dual fan is reflexive. The proposition now follows from the observation that the polygon dual to the reflexive polygon is also reflexive.
\end{proof}

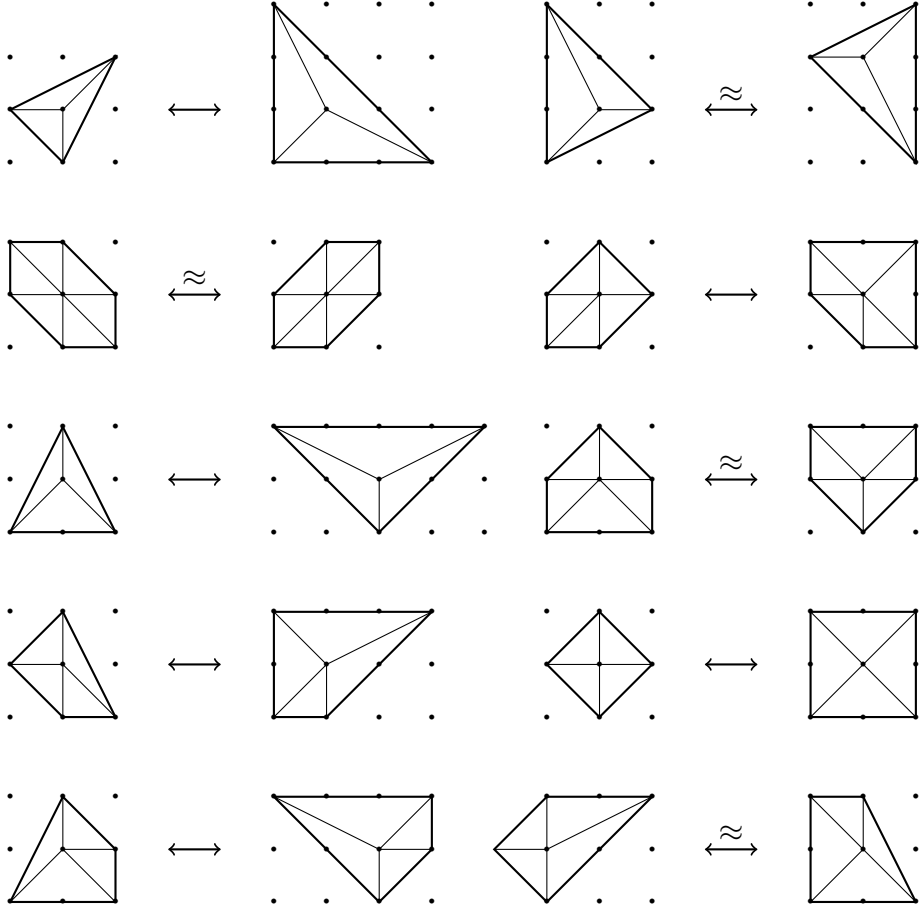
\begin{figure}
\begin{tikzpicture}

\begin{scope}[scale=0.7]

\node at (0,0) {$\boldsymbol{\cdot}$};
\node at (1,0) {$\boldsymbol{\cdot}$};
\node at (2,0) {$\boldsymbol{\cdot}$};
\node at (0,1) {$\boldsymbol{\cdot}$};
\node at (1,1) {$\boldsymbol{\cdot}$};
\node at (2,1) {$\boldsymbol{\cdot}$};
\node at (0,2) {$\boldsymbol{\cdot}$};
\node at (1,2) {$\boldsymbol{\cdot}$};
\node at (2,2) {$\boldsymbol{\cdot}$};

\draw[thick](1,0)--(2,2)--(0,1)--cycle;

\draw(1,1)--(1,0);
\draw(1,1)--(2,2);
\draw(1,1)--(0,1);

\draw[thick,<->] (3,1)--(4,1);

\begin{scope}[xshift=142.5]
\node at (0,0) {$\boldsymbol{\cdot}$};
\node at (1,0) {$\boldsymbol{\cdot}$};
\node at (2,0) {$\boldsymbol{\cdot}$};
\node at (3,0) {$\boldsymbol{\cdot}$};
\node at (0,1) {$\boldsymbol{\cdot}$};
\node at (1,1) {$\boldsymbol{\cdot}$};
\node at (2,1) {$\boldsymbol{\cdot}$};
\node at (3,1) {$\boldsymbol{\cdot}$};
\node at (0,2) {$\boldsymbol{\cdot}$};
\node at (1,2) {$\boldsymbol{\cdot}$};
\node at (2,2) {$\boldsymbol{\cdot}$};
\node at (3,2) {$\boldsymbol{\cdot}$};
\node at (0,3) {$\boldsymbol{\cdot}$};
\node at (1,3) {$\boldsymbol{\cdot}$};
\node at (2,3) {$\boldsymbol{\cdot}$};
\node at (3,3) {$\boldsymbol{\cdot}$};

\draw[thick](0,0)--(0,3)--(3,0)--cycle;

\draw(1,1)--(0,0);
\draw(1,1)--(3,0);
\draw(1,1)--(0,3);
\end{scope} 

\begin{scope}[xshift=290]
\node at (0,0) {$\boldsymbol{\cdot}$};
\node at (1,0) {$\boldsymbol{\cdot}$};
\node at (2,0) {$\boldsymbol{\cdot}$};
\node at (0,1) {$\boldsymbol{\cdot}$};
\node at (1,1) {$\boldsymbol{\cdot}$};
\node at (2,1) {$\boldsymbol{\cdot}$};
\node at (0,2) {$\boldsymbol{\cdot}$};
\node at (1,2) {$\boldsymbol{\cdot}$};
\node at (2,2) {$\boldsymbol{\cdot}$};
\node at (0,3) {$\boldsymbol{\cdot}$};
\node at (1,3) {$\boldsymbol{\cdot}$};
\node at (2,3) {$\boldsymbol{\cdot}$};

\draw[thick](0,0)--(0,3)--(2,1)--cycle;

\draw(1,1)--(0,0);
\draw(1,1)--(0,3);
\draw(1,1)--(2,1);

\draw[thick,<->] (3,1)--(4,1);

\node at (3.5,1.3) {$\approx$};

\begin{scope}[xshift=142.5]
\node at (0,0) {$\boldsymbol{\cdot}$};
\node at (1,0) {$\boldsymbol{\cdot}$};
\node at (2,0) {$\boldsymbol{\cdot}$};
\node at (0,1) {$\boldsymbol{\cdot}$};
\node at (1,1) {$\boldsymbol{\cdot}$};
\node at (2,1) {$\boldsymbol{\cdot}$};
\node at (0,2) {$\boldsymbol{\cdot}$};
\node at (1,2) {$\boldsymbol{\cdot}$};
\node at (2,2) {$\boldsymbol{\cdot}$};
\node at (0,3) {$\boldsymbol{\cdot}$};
\node at (1,3) {$\boldsymbol{\cdot}$};
\node at (2,3) {$\boldsymbol{\cdot}$};

\draw[thick](2,0)--(2,3)--(0,2)--cycle;

\draw(1,2)--(2,0);
\draw(1,2)--(2,3);
\draw(1,2)--(0,2);
\end{scope}

\end{scope}

\begin{scope}[yshift=-100]

\node at (0,0) {$\boldsymbol{\cdot}$};
\node at (1,0) {$\boldsymbol{\cdot}$};
\node at (2,0) {$\boldsymbol{\cdot}$};
\node at (0,1) {$\boldsymbol{\cdot}$};
\node at (1,1) {$\boldsymbol{\cdot}$};
\node at (2,1) {$\boldsymbol{\cdot}$};
\node at (0,2) {$\boldsymbol{\cdot}$};
\node at (1,2) {$\boldsymbol{\cdot}$};
\node at (2,2) {$\boldsymbol{\cdot}$};

\draw[thick](0,1)--(0,2)--(1,2)--(2,1)--(2,0)--(1,0)--cycle;

\draw(1,1)--(0,1);
\draw(1,1)--(0,2);
\draw(1,1)--(1,2);
\draw(1,1)--(2,1);
\draw(1,1)--(2,0);
\draw(1,1)--(1,0);

\draw[thick,<->] (3,1)--(4,1);

\node at (3.5,1.3) {$\approx$};

\begin{scope}[xshift=142.5]
\node at (0,0) {$\boldsymbol{\cdot}$};
\node at (1,0) {$\boldsymbol{\cdot}$};
\node at (2,0) {$\boldsymbol{\cdot}$};
\node at (0,1) {$\boldsymbol{\cdot}$};
\node at (1,1) {$\boldsymbol{\cdot}$};
\node at (2,1) {$\boldsymbol{\cdot}$};
\node at (0,2) {$\boldsymbol{\cdot}$};
\node at (1,2) {$\boldsymbol{\cdot}$};
\node at (2,2) {$\boldsymbol{\cdot}$};

\draw[thick](0,0)--(0,1)--(1,2)--(2,2)--(2,1)--(1,0)--cycle;

\draw(1,1)--(0,0);
\draw(1,1)--(0,1);
\draw(1,1)--(1,2);
\draw(1,1)--(2,2);
\draw(1,1)--(2,1);
\draw(1,1)--(1,0);

\end{scope} 

\begin{scope}[xshift=290]
\node at (0,0) {$\boldsymbol{\cdot}$};
\node at (1,0) {$\boldsymbol{\cdot}$};
\node at (2,0) {$\boldsymbol{\cdot}$};
\node at (0,1) {$\boldsymbol{\cdot}$};
\node at (1,1) {$\boldsymbol{\cdot}$};
\node at (2,1) {$\boldsymbol{\cdot}$};
\node at (0,2) {$\boldsymbol{\cdot}$};
\node at (1,2) {$\boldsymbol{\cdot}$};
\node at (2,2) {$\boldsymbol{\cdot}$};

\draw[thick](0,0)--(0,1)--(1,2)--(2,1)--(1,0)--cycle;

\draw(1,1)--(0,0);
\draw(1,1)--(0,1);
\draw(1,1)--(1,2);
\draw(1,1)--(2,1);
\draw(1,1)--(1,0);

\draw[thick,<->] (3,1)--(4,1);

\begin{scope}[xshift=142.5]
\node at (0,0) {$\boldsymbol{\cdot}$};
\node at (1,0) {$\boldsymbol{\cdot}$};
\node at (2,0) {$\boldsymbol{\cdot}$};
\node at (0,1) {$\boldsymbol{\cdot}$};
\node at (1,1) {$\boldsymbol{\cdot}$};
\node at (2,1) {$\boldsymbol{\cdot}$};
\node at (0,2) {$\boldsymbol{\cdot}$};
\node at (1,2) {$\boldsymbol{\cdot}$};
\node at (2,2) {$\boldsymbol{\cdot}$};

\draw[thick](0,1)--(0,2)--(2,2)--(2,0)--(1,0)--cycle;

\draw(1,1)--(0,1);
\draw(1,1)--(0,2);
\draw(1,1)--(2,2);
\draw(1,1)--(2,0);
\draw(1,1)--(1,0);

\end{scope}

\end{scope}

\end{scope}

\begin{scope}[yshift=-200]

\node at (0,0) {$\boldsymbol{\cdot}$};
\node at (1,0) {$\boldsymbol{\cdot}$};
\node at (2,0) {$\boldsymbol{\cdot}$};
\node at (0,1) {$\boldsymbol{\cdot}$};
\node at (1,1) {$\boldsymbol{\cdot}$};
\node at (2,1) {$\boldsymbol{\cdot}$};
\node at (0,2) {$\boldsymbol{\cdot}$};
\node at (1,2) {$\boldsymbol{\cdot}$};
\node at (2,2) {$\boldsymbol{\cdot}$};

\draw[thick](0,0)--(1,2)--(2,0)--cycle;

\draw(1,1)--(0,0);;
\draw(1,1)--(1,2);
\draw(1,1)--(2,0);

\draw[thick,<->] (3,1)--(4,1);

\begin{scope}[xshift=142.5]
\node at (0,0) {$\boldsymbol{\cdot}$};
\node at (1,0) {$\boldsymbol{\cdot}$};
\node at (2,0) {$\boldsymbol{\cdot}$};
\node at (3,0) {$\boldsymbol{\cdot}$};
\node at (4,0) {$\boldsymbol{\cdot}$};
\node at (0,1) {$\boldsymbol{\cdot}$};
\node at (1,1) {$\boldsymbol{\cdot}$};
\node at (2,1) {$\boldsymbol{\cdot}$};
\node at (3,1) {$\boldsymbol{\cdot}$};
\node at (4,1) {$\boldsymbol{\cdot}$};
\node at (0,2) {$\boldsymbol{\cdot}$};
\node at (1,2) {$\boldsymbol{\cdot}$};
\node at (2,2) {$\boldsymbol{\cdot}$};
\node at (3,2) {$\boldsymbol{\cdot}$};
\node at (4,2) {$\boldsymbol{\cdot}$};

\draw[thick](0,2)--(4,2)--(2,0)--cycle;

\draw(2,1)--(0,2);
\draw(2,1)--(4,2);
\draw(2,1)--(2,0);

\end{scope} 

\begin{scope}[xshift=290]
\node at (0,0) {$\boldsymbol{\cdot}$};
\node at (1,0) {$\boldsymbol{\cdot}$};
\node at (2,0) {$\boldsymbol{\cdot}$};
\node at (0,1) {$\boldsymbol{\cdot}$};
\node at (1,1) {$\boldsymbol{\cdot}$};
\node at (2,1) {$\boldsymbol{\cdot}$};
\node at (0,2) {$\boldsymbol{\cdot}$};
\node at (1,2) {$\boldsymbol{\cdot}$};
\node at (2,2) {$\boldsymbol{\cdot}$};

\draw[thick](0,0)--(0,1)--(1,2)--(2,1)--(2,0)--cycle;

\draw(1,1)--(0,0);
\draw(1,1)--(0,1);
\draw(1,1)--(1,2);
\draw(1,1)--(2,1);
\draw(1,1)--(2,0);

\draw[thick,<->] (3,1)--(4,1);

\node at (3.5,1.3) {$\approx$};

\begin{scope}[xshift=142.5]
\node at (0,0) {$\boldsymbol{\cdot}$};
\node at (1,0) {$\boldsymbol{\cdot}$};
\node at (2,0) {$\boldsymbol{\cdot}$};
\node at (0,1) {$\boldsymbol{\cdot}$};
\node at (1,1) {$\boldsymbol{\cdot}$};
\node at (2,1) {$\boldsymbol{\cdot}$};
\node at (0,2) {$\boldsymbol{\cdot}$};
\node at (1,2) {$\boldsymbol{\cdot}$};
\node at (2,2) {$\boldsymbol{\cdot}$};

\draw[thick](0,1)--(0,2)--(2,2)--(2,1)--(1,0)--cycle;

\draw(1,1)--(0,1);
\draw(1,1)--(0,2);
\draw(1,1)--(2,2);
\draw(1,1)--(2,1);
\draw(1,1)--(1,0);

\end{scope}

\end{scope}

\end{scope}

\begin{scope}[yshift=-300]

\node at (0,0) {$\boldsymbol{\cdot}$};
\node at (1,0) {$\boldsymbol{\cdot}$};
\node at (2,0) {$\boldsymbol{\cdot}$};
\node at (0,1) {$\boldsymbol{\cdot}$};
\node at (1,1) {$\boldsymbol{\cdot}$};
\node at (2,1) {$\boldsymbol{\cdot}$};
\node at (0,2) {$\boldsymbol{\cdot}$};
\node at (1,2) {$\boldsymbol{\cdot}$};
\node at (2,2) {$\boldsymbol{\cdot}$};

\draw[thick](0,1)--(1,2)--(2,0)--(1,0)--cycle;

\draw(1,1)--(0,1);
\draw(1,1)--(1,2);
\draw(1,1)--(2,0);
\draw(1,1)--(1,0);

\draw[thick,<->] (3,1)--(4,1);

\begin{scope}[xshift=142.5]
\node at (0,0) {$\boldsymbol{\cdot}$};
\node at (1,0) {$\boldsymbol{\cdot}$};
\node at (2,0) {$\boldsymbol{\cdot}$};
\node at (3,0) {$\boldsymbol{\cdot}$};
\node at (0,1) {$\boldsymbol{\cdot}$};
\node at (1,1) {$\boldsymbol{\cdot}$};
\node at (2,1) {$\boldsymbol{\cdot}$};
\node at (3,1) {$\boldsymbol{\cdot}$};
\node at (0,2) {$\boldsymbol{\cdot}$};
\node at (1,2) {$\boldsymbol{\cdot}$};
\node at (2,2) {$\boldsymbol{\cdot}$};
\node at (3,2) {$\boldsymbol{\cdot}$};

\draw[thick](0,0)--(0,2)--(3,2)--(1,0)--cycle;

\draw(1,1)--(0,0);
\draw(1,1)--(0,2);
\draw(1,1)--(3,2);
\draw(1,1)--(1,0);

\end{scope} 

\begin{scope}[xshift=290]
\node at (0,0) {$\boldsymbol{\cdot}$};
\node at (1,0) {$\boldsymbol{\cdot}$};
\node at (2,0) {$\boldsymbol{\cdot}$};
\node at (0,1) {$\boldsymbol{\cdot}$};
\node at (1,1) {$\boldsymbol{\cdot}$};
\node at (2,1) {$\boldsymbol{\cdot}$};
\node at (0,2) {$\boldsymbol{\cdot}$};
\node at (1,2) {$\boldsymbol{\cdot}$};
\node at (2,2) {$\boldsymbol{\cdot}$};

\draw[thick](0,1)--(1,2)--(2,1)--(1,0)--cycle;

\draw(1,1)--(1,0);
\draw(1,1)--(0,1);
\draw(1,1)--(2,1);
\draw(1,1)--(1,2);

\draw[thick,<->] (3,1)--(4,1);

\begin{scope}[xshift=142.5]
\node at (0,0) {$\boldsymbol{\cdot}$};
\node at (1,0) {$\boldsymbol{\cdot}$};
\node at (2,0) {$\boldsymbol{\cdot}$};
\node at (0,1) {$\boldsymbol{\cdot}$};
\node at (1,1) {$\boldsymbol{\cdot}$};
\node at (2,1) {$\boldsymbol{\cdot}$};
\node at (0,2) {$\boldsymbol{\cdot}$};
\node at (1,2) {$\boldsymbol{\cdot}$};
\node at (2,2) {$\boldsymbol{\cdot}$};

\draw[thick](0,0)--(0,2)--(2,2)--(2,0)--cycle;

\draw(1,1)--(0,0);
\draw(1,1)--(2,0);
\draw(1,1)--(0,2);
\draw(1,1)--(2,2);
\end{scope}

\end{scope}

\end{scope}

\begin{scope}[yshift=-400]

\node at (0,0) {$\boldsymbol{\cdot}$};
\node at (1,0) {$\boldsymbol{\cdot}$};
\node at (2,0) {$\boldsymbol{\cdot}$};
\node at (0,1) {$\boldsymbol{\cdot}$};
\node at (1,1) {$\boldsymbol{\cdot}$};
\node at (2,1) {$\boldsymbol{\cdot}$};
\node at (0,2) {$\boldsymbol{\cdot}$};
\node at (1,2) {$\boldsymbol{\cdot}$};
\node at (2,2) {$\boldsymbol{\cdot}$};

\draw[thick](0,0)--(1,2)--(2,1)--(2,0)--cycle;

\draw(1,1)--(0,0);;
\draw(1,1)--(1,2);
\draw(1,1)--(2,1);
\draw(1,1)--(2,0);

\draw[thick,<->] (3,1)--(4,1);

\begin{scope}[xshift=142.5]
\node at (0,0) {$\boldsymbol{\cdot}$};
\node at (1,0) {$\boldsymbol{\cdot}$};
\node at (2,0) {$\boldsymbol{\cdot}$};
\node at (3,0) {$\boldsymbol{\cdot}$};
\node at (0,1) {$\boldsymbol{\cdot}$};
\node at (1,1) {$\boldsymbol{\cdot}$};
\node at (2,1) {$\boldsymbol{\cdot}$};
\node at (3,1) {$\boldsymbol{\cdot}$};
\node at (0,2) {$\boldsymbol{\cdot}$};
\node at (1,2) {$\boldsymbol{\cdot}$};
\node at (2,2) {$\boldsymbol{\cdot}$};
\node at (3,2) {$\boldsymbol{\cdot}$};

\draw[thick](0,2)--(3,2)--(3,1)--(2,0)--cycle;

\draw(2,1)--(0,2);
\draw(2,1)--(3,2);
\draw(2,1)--(3,1);
\draw(2,1)--(2,0);

\end{scope} 

\begin{scope}[xshift=290]
\node at (0,0) {$\boldsymbol{\cdot}$};
\node at (1,0) {$\boldsymbol{\cdot}$};
\node at (2,0) {$\boldsymbol{\cdot}$};
\node at (0,1) {$\boldsymbol{\cdot}$};
\node at (1,1) {$\boldsymbol{\cdot}$};
\node at (2,1) {$\boldsymbol{\cdot}$};
\node at (0,2) {$\boldsymbol{\cdot}$};
\node at (1,2) {$\boldsymbol{\cdot}$};
\node at (2,2) {$\boldsymbol{\cdot}$};

\draw[thick](-1,1)--(0,2)--(2,2)--(0,0)--cycle;

\draw(0,1)--(-1,1);
\draw(0,1)--(0,2);
\draw(0,1)--(2,2);
\draw(0,1)--(0,0);

\draw[thick,<->] (3,1)--(4,1);

\node at (3.5,1.3) {$\approx$};

\begin{scope}[xshift=142.5]
\node at (0,0) {$\boldsymbol{\cdot}$};
\node at (1,0) {$\boldsymbol{\cdot}$};
\node at (2,0) {$\boldsymbol{\cdot}$};
\node at (0,1) {$\boldsymbol{\cdot}$};
\node at (1,1) {$\boldsymbol{\cdot}$};
\node at (2,1) {$\boldsymbol{\cdot}$};
\node at (0,2) {$\boldsymbol{\cdot}$};
\node at (1,2) {$\boldsymbol{\cdot}$};
\node at (2,2) {$\boldsymbol{\cdot}$};

\draw[thick](0,0)--(0,2)--(1,2)--(2,0)--cycle;

\draw(1,1)--(0,0);
\draw(1,1)--(0,2);
\draw(1,1)--(1,2);
\draw(1,1)--(2,0);

\end{scope}

\end{scope}

\end{scope}

\end{scope}
\end{tikzpicture}
\caption{Sixteen types of lattice polygons (up to automorphisms of the lattice) with a single lattice point in the interior together with their tropical caustics. Each caustic consists of segments connecting the central point to the vertices. The polygons are paired by duality, the multiplicity of an edge of the caustic is the length of the corresponding side of the dual polygon. Note that some of these types are self-dual.
}
\label{fig_reflexivepoly}
\end{figure}

\begin{proposition}\label{prop-edge}
If $I=\Phi(t_\Phi)$ consists of an interval or ray then $\kf$ has no vertices at the relative interior of $I$, it is an edge of $\kf$ of weight 2.
Near each endpoint of $I$, the caustic $\kf$ is locally isomorphic to a curve depicted on Fig. \ref{fig_finaledge}.
\end{proposition}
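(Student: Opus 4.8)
The plan is to read off the local structure of $\kf$ near $I$ directly from the tropical series $\mathcal{F}_\Phi$ of Definition \ref{def_series}, using that $\kf$ is its corner locus (Proposition \ref{prop_cornerlocus}) and that $I=\Phi(t_\Phi)=\mathcal{F}_\Phi^{-1}(t_\Phi)$ is the locus where the concave function $\mathcal{F}_\Phi$ attains its global maximum $t_\Phi$ (Lemma \ref{lem_levelsets}). First I treat the relative interior. I fix a point $p$ in the relative interior of $I$ and let $\nu\in M$ be the primitive conormal to the rational-slope interval $I$, which exists by Proposition \ref{finallocusclass}. Since $\mathcal{F}_\Phi\equiv t_\Phi$ along $I$ and $\mathcal{F}_\Phi$ is the infimum of the affine functions $a^\lambda_\Phi-\lambda(\cdot)$, any primitive $\lambda$ that is active at $p$ (that is, with $a^\lambda_\Phi-\lambda(p)=t_\Phi$) must annihilate the direction of $I$; the only primitive such covectors are $\pm\nu$. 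As there is exactly one support half-plane per covector, precisely the two terms $\nu,-\nu$ are active, so locally $\mathcal{F}_\Phi=\min(a^\nu_\Phi-\nu,\,a^{-\nu}_\Phi+\nu)$, whose corner locus is the single line through $I$ with no vertex at $p$. The weight of this edge is the lattice length of the segment $[\nu,-\nu]\subset M$, which equals $2$. This proves the first assertion.

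For an endpoint $p_1$ of $I$ I would organise the argument around the Newton cell dual to $p_1$. By the local finiteness established in the proof of Proposition \ref{prop_positiveshiftispolygon}, only finitely many covectors $\lambda$ are active at $p_1$; let $V\subset M$ be this finite set and put $Q=\operatorname{conv}(V)\subset\mr$. Near $p_1$ one has $\mathcal{F}_\Phi=\min_{\lambda\in V}(a^\lambda_\Phi-\lambda(\cdot))$, and by Proposition \ref{prop_cornerlocus} the caustic $\kf$ near $p_1$ is exactly the tropical fan dual to $Q$: each edge $\sigma$ of $Q$ yields a caustic edge emanating from $p_1$ of weight equal to the lattice length of $\sigma$ and of direction the primitive covector conormal to $\sigma$. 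Because $p_1$ is a maximum of the concave $\mathcal{F}_\Phi$, the origin $O$ lies in the superdifferential $\operatorname{conv}\{-\lambda:\lambda\in V\}=-Q$, so $O\in Q$.

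The crux is then to locate $O$ inside $Q$, and for this I would compute the dimension of the maximum locus of $\mathcal{F}_\Phi$ near $p_1$. This locus is $\{p:\lambda(p-p_1)\le 0\ \forall\lambda\in V\}$, i.e. $p_1$ plus the polar cone of $Q$ at $O$, whose dimension is $0$ if $O$ is interior to $Q$, is $1$ if $O$ lies in the relative interior of an edge, and is $2$ if $O$ is a vertex of $Q$. Since near its endpoint $p_1$ the final locus $I$ is one-dimensional (it is an interval or ray, never a point here, the point case being that of Proposition \ref{prop-point}, and $\Phi(t_\Phi)$ has empty interior), the origin $O$ must lie in the relative interior of a unique edge $e_0$ of $Q$. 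The dual of $e_0$ is the caustic edge along which $\mathcal{F}_\Phi\equiv t_\Phi$, namely $I$; by the interior analysis its active covectors are $\pm\nu$, so $e_0=[\nu,-\nu]$ has lattice length $2$ and $O$ is its midpoint, recovering the weight-$2$ edge $I$. Moreover no edge of $Q$ other than $e_0$ can be parallel to $e_0$: a second such edge would produce a caustic edge collinear with $I$ on the far side of $p_1$, forcing $\mathcal{F}_\Phi\equiv t_\Phi$ to continue past $p_1$ and contradicting that $p_1$ is an endpoint. Hence the remaining edges of $Q$ are dual to caustic rays leaving $p_1$ strictly into the two open half-planes bounded by the line through $I$, and the resulting local picture, the weight-$2$ edge $I$ together with the tropical fan dual to $\partial Q\setminus e_0$, is the model of Figure \ref{fig_finaledge}; its balancing at $p_1$ is automatic since $\kf$ is a tropical curve (Theorem \ref{caustic-trop-curve}).

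The main obstacle I anticipate is precisely this dichotomy fixing the position of $O$ in $Q$ and its translation into the exact local model: one must rule out $O$ being a vertex of $Q$ and confirm that $O$ is the midpoint of a genuine lattice-length-$2$ edge $e_0$ whose direction is unrepeated, which is where the concavity of $\mathcal{F}_\Phi$, the interior computation, and the canonicity of $\Phi(t_\Phi-\varepsilon)$ from Theorem \ref{fan-description} all enter (in analogy with the reflexive classification of Proposition \ref{prop-point} and the tropical-trigonometric Lemma \ref{trop-trig}). A secondary point requiring care is the unbounded case, where $I$ is a ray (Lemma \ref{finalbounded}, Proposition \ref{finallocusclass}): there I must still guarantee that $V$ is finite and that $Q$ is a bona fide polygon near the single endpoint, for which I would again invoke the local finiteness of active support half-planes.
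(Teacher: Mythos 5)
Your treatment of the relative interior of $I$ is correct and is essentially the paper's own first step in dual form: the paper observes that no boundary $\dd H(t_\Phi)$ of an evolved support half-plane can cross the relative interior of $I$ unless $\dd H$ is parallel to $I$, which is the same as your observation that the only covectors active at an interior point of $I$ are $\pm\nu$. Your framework at an endpoint $p_1$ is also sound: the Newton polygon $Q$ of the finitely many active covectors, the caustic germ at $p_1$ realized as the normal fan of $Q$, and the dimension count forcing the origin $O$ to be the midpoint of an edge $e_0=[\nu,-\nu]$ of lattice length $2$ -- this is a faithful dual translation of the paper's picture of a half-plane in $\mr$ subdivided into the cones dual to the colliding particles.

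The proof fails after that point, in two ways. First, your claim that no edge of $Q$ other than $e_0$ can be parallel to $e_0$ is false, and it is contradicted by the very statement you are proving: in the third local model of Figure \ref{fig_finaledge}, a particle of weight $n$ travels along the line containing $I$ and collides at $p_1$ with two weight-$1$ particles, so the caustic does have an edge collinear with $I$ on the far side of $p_1$; dually, $Q$ is a quadrilateral with an edge of lattice length $n$ parallel to $e_0$. Your argument for excluding this configuration conflates the corner locus with the maximal level set: a caustic edge collinear with $I$ beyond $p_1$ does not force $\mathcal{F}_\Phi\equiv t_\Phi$ there, because along the ray dual to an edge of $Q$ not containing $O$ the function $\mathcal{F}_\Phi$ decreases strictly, so $p_1$ remains an endpoint of $\mathcal{F}_\Phi^{-1}(t_\Phi)$ and no contradiction arises. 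Second -- and this is the substantive gap -- you never classify the possible polygons $Q$, which is the actual content of the second assertion: ``locally isomorphic to Figure \ref{fig_finaledge}'' means one of three specific configurations (incoming weights $(1,1)$, $(2,2)$, or $(1,n,1)$), not merely ``a weight-$2$ edge with some balanced rays attached at $p_1$.'' To obtain the classification you must use that for $t<t_\Phi$ every vertex of $\Phi(t)$ is of $A_m$-type (Theorem \ref{fan-description}), i.e.\ that every edge of $Q$ other than $e_0$ spans a height-one cone, and then run the lattice-geometric argument the paper runs: if $\partial Q\setminus e_0$ consists of two edges, the half-plane over $e_0$ splits into an $A_m$-angle and its complementary angle, and Proposition \ref{dual-can} forces $m\le 1$ (the first two pictures); if it consists of three or more, Lemma \ref{trop-trig} forces exactly three, with the two outer ones of type $A_0$ (the third picture). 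Your write-up only gestures at these ingredients ``in analogy,'' and with the false no-parallel-edge claim in place the classification would in fact come out wrong, excluding the $(1,n,1)$ collision altogether.
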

\begin{proof}
Note that no time $t_\Phi$ evolution $H(t_\Phi)$ of a support half-plane to $\Phi$ can intersect the relative interior of $I$, unless $\dd H$ is parallel to $I$.
Thus, $\kf$ only has vertices at the endpoints of $I$.
Consider the polygon in $\mr$ dual to such a vertex $v$.
Near $v$ we have a subdivision of the half-plane in $\mr$ (dual to $I$) into dual canonical cones (dual to the trajectories of colliding particles), cf. the proof of Theorem \ref{thm_nonfinal}.

Suppose that only two particles collide at $v$. Then we have a subdivision of a half-plane into an $A_n$-angle and its complimentary angle that has to be also of $A_n$-type. But this is possible only when $n=0$ or $n=1$ (the first two collisions of Fig. \ref{fig_finaledge}).

Suppose that more than two particles collide at $v$. Then the half-plane is subdivided into at least three angles of $A_n$-type, $\Sigma_1,\dots,\Sigma_k$ (ordered clockwise). 
By Lemma \ref{trop-trig} applied to the union of all but the first angle, we have $k=3$ while the union $\Sigma_2\cup\Sigma_3$ is an $A_0$ angle. This implies that $\Sigma_1$ is an $A_0$-angle itself since it complements an $A_0$ angle to the half-plane. By symmetry, the last angle $\Sigma_3$ is also an $A_0$-angle. Therefore, the resulting subdivision is isomorphic to the one depicted at the right of Figure \ref{fig_finaledge}.
\end{proof}


\begin{figure}
\begin{tikzpicture}
\node at (-1.15,1) {$1$};
\draw[-latex](-1,1)--(-0.5,0.5);
\draw(-0.5,0.5)--(0,0);

\node at (-1.15,-1.1) {$1$};
\draw[-latex](-1,-1)--(-0.5,-0.5);
\draw(-0.5,-0.5)--(0,0);

\node at (1.15,0) {$2$};
\draw[-latex,very thick](0,0)--(0.5,0);
\draw[very thick](0.5,0)--(1,0);

\begin{scope}[yshift=-90]
\draw(-1,0)--(0,1)--(0,-1)--(-1,0);
\node at (-1,1) {$\bullet$};
\node at (-1,0) {$\bullet$};
\node at (-1,-1) {$\bullet$};
\node at (0,1) {$\bullet$};
\node at (0,0) {$\bullet$};
\node at (0,-1) {$\bullet$};
\node at (1,1) {$\bullet$};
\node at (1,0) {$\bullet$};
\node at (1,-1) {$\bullet$};
\end{scope}

\begin{scope}[xshift=110]
\node at (0,1.2) {$2$};
\draw[-latex](0,1)--(0,0.5);
\draw(0,0.5)--(0,0);

\node at (-1.15,-1.1) {$2$};
\draw[-latex](-1,-1)--(-0.5,-0.5);
\draw(-0.5,-0.5)--(0,0);

\node at (1.15,0) {$2$};
\draw[-latex,very thick](0,0)--(0.5,0);
\draw[very thick](0.5,0)--(1,0);

\begin{scope}[yshift=-90]
\draw(-1,1)--(1,1)--(1,-1)--(-1,1);

\node at (-1,1) {$\bullet$};
\node at (-1,0) {$\bullet$};
\node at (-1,-1) {$\bullet$};
\node at (0,1) {$\bullet$};
\node at (0,0) {$\bullet$};
\node at (0,-1) {$\bullet$};
\node at (1,1) {$\bullet$};
\node at (1,0) {$\bullet$};
\node at (1,-1) {$\bullet$};
\end{scope}

\end{scope}

\begin{scope}[xshift=220]
\node at (0,1.2) {$1$};
\draw[-latex](0,1)--(0,0.5);
\draw(0,0.5)--(0,0);

\node at (-1.15,0) {$n$};
\draw[-latex](-1,0)--(-0.5,0);
\draw(-0.5,0)--(0,0);

\node at (1.15,-1.1) {$1$};
\draw[-latex](1,-1)--(0.5,-0.5);
\draw(0.5,-0.5)--(0,0);

\node at (1.15,0) {$2$};
\draw[-latex,very thick](0,0)--(0.5,0);
\draw[very thick](0.5,0)--(1,0);

\begin{scope}[yshift=-80,xshift=20]
\draw(0,-1)--(-1,-2)--(-1,1)--(0,1)--(0,-1);

\draw[<->](-1.2,1.025)--(-1.2,-2.025);
\node at (-1.3,-0.5) {$n$};

\node at (-2,-0.4) {$\vdots$};

\node at (-1,1) {$\bullet$};
\node at (-1,-2) {$\bullet$};

\node at (-2,1) {$\bullet$};
\node at (-2,-2) {$\bullet$};

\node at (0,1) {$\bullet$};
\node at (0,0) {$\bullet$};
\node at (0,-1) {$\bullet$};
\node at (0,-2) {$\bullet$};
\
\end{scope}

\end{scope}

\end{tikzpicture}
\caption{
Caustics near an endpoint of of $\Phi(t_\Phi)$ and their local dual polygons.
}\label{fig_finaledge}
\end{figure}
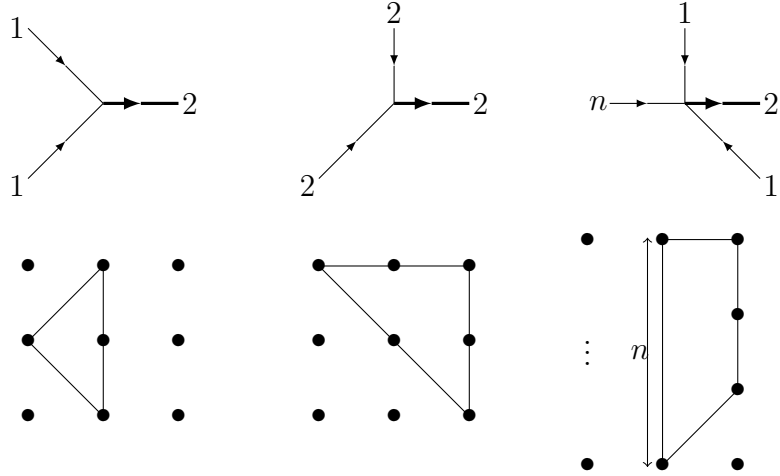

\section{Total length of a tropical caustic}
\newcommand{\Per}{\operatorname{l}}
Let $\Phi\subset N_\mathbb{R}$ be a compact convex domain. 
If $\Phi$ is polygonal (with rational slopes) then its caustic $\kf$ is a finite graph.
In this case we define $\Per(\kf)$ as the sum of tropical lengths of all edges of $\kf$ (multiplied by the corresponding weight).
We define $\Per(\dd\Phi)$ as the tropical perimeter of the polygon $\Phi$, i.e. the sum of tropical lengths of all the edges of $\dd\Phi$.
If $\Phi$ is not necessarily polygonal, we define $\Per(\kf)$ and $\Per(\dd\Phi)$ as the limits of the corresponding quantities for $\Phi(t)$ for $t>0$ when $t\to 0$.

The final locus $\Phi(t_\Phi)$ of a compact convex domain is either a finite interval or a point. Once again, we can measure its tropical length $\Per(\Phi(t_\Phi))$. This time we do not multiply by the weight. If $\Phi(t_\Phi)$ is a point then we set $\Per(\Phi(t_\Phi))=0$.

The formula \eqref{length-noether} was discovered in \cite{KaSh3}.
\begin{theorem} 
The limit values 
$\Per(\kf),\Per(\dd\Phi)<\infty$ exist for any compact convex domain $\Phi\subset\nr$.
Furthermore, we have
\begin{equation}
\label{length-noether}
\Per(\mathcal{K}_\Phi)+\Per(\partial\Phi)=12 t_\Phi+4\Per(\Phi(t_\Phi)).
\end{equation}
\label{prop_NoetherCaustic}
\end{theorem}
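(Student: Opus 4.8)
The plan is to reduce the identity to the pointwise trigonometric balance already established in Theorem \ref{12n} and then integrate it in time. First I would treat the case when $\Phi$ is itself a finite lattice-slope polygon, where all quantities are finite sums and no limits are needed. Write $P(t)=\Per(\partial\Phi(t))$ for the tropical perimeter of the time-$t$ wave front. By Corollary \ref{cor_finitenmbrofsides} each $\Phi(t)$, $t>0$, is a finite polygon, canonical by Theorem \ref{fan-description}, and its dual fan is constant on each interval between two consecutive critical times, of which there are only finitely many (the fan can only lose rays). On such an interval $P(t)$ is affine with $P'(t)=\sum_E l'(E(t))$, while the weighted vertex count $\sum_v w(v)$ is constant.

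The key pointwise step is the identity $\sum_v w(v)-P'(t)=12$ for every non-critical $t\in(0,t_\Phi)$. Here I would match the two summations to the two sums of Theorem \ref{12n} applied to the canonical convex polygon $\Phi(t)$. Since $\Phi(t)$ is convex, every vertex is convex, and its weight $w(v)$, which by Corollary \ref{An-single} equals the determinant of the vertex cone, coincides with $d_v$. By Proposition \ref{prop-lprime} the length gradient is $l'(E(t))=-\tr(R_-,R,R_+)$, where $R_\pm$ are the conormals to the tropical bissectrices at the endpoints of $E(t)$, i.e.\ to the two adjacent caustic trajectories. Identifying this double angle with the edge contribution $d_E$ of Theorem \ref{12n} (both are computed by the same abelianized $\widetilde{\operatorname{SL}}_2(\Z)$-passage between the bases $(\alpha_{v_-},\beta_E)$ and $(\alpha_{v_+},\beta_E)$) gives $-l'(E(t))=d_E$, and Theorem \ref{12n} yields $\sum_v d_v+\sum_E d_E=\sum_v w(v)-P'(t)=12$. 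Checking that the signs in the two conventions agree is the step I expect to require the most care; I would pin it down on the model subdivision of Figure \ref{fig_nonterminal} and on a worked triangle.

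Next I would assemble the two global quantities using the particle-trajectory description of Proposition \ref{part-interpr}. Each vertex of weight $w$ moves with velocity a primitive integer vector, hence sweeps weighted tropical length exactly $w\,dt$ during $[t,t+dt]$, so $\Per(\mathcal{K}_\Phi)=\int_0^{t_\Phi}\bigl(\sum_v w(v)\bigr)\,dt+2\Per(\Phi(t_\Phi))$, the last term being the weight-$2$ final locus edge (Propositions \ref{finallocusclass} and \ref{prop-edge}). On the other hand $P(t_\Phi)=2\Per(\Phi(t_\Phi))$, and by the fundamental theorem of calculus for the continuous piecewise-affine function $P$, $\Per(\partial\Phi)=P(0)=P(t_\Phi)-\int_0^{t_\Phi}P'(t)\,dt$. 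Adding the two and substituting the pointwise identity gives
$$\Per(\mathcal{K}_\Phi)+\Per(\partial\Phi)=\int_0^{t_\Phi}\bigl(\textstyle\sum_v w(v)-P'(t)\bigr)\,dt+4\Per(\Phi(t_\Phi))=12\,t_\Phi+4\Per(\Phi(t_\Phi)),$$
which is the formula in the polygonal case.

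Finally, for arbitrary compact convex $\Phi$ I would apply the polygonal case to $\Psi=\Phi(t_0)$ with $t_0>0$, which is a finite polygon. By the Huygens principle (Theorem \ref{prop_additivefront}) one has $t_\Psi=t_\Phi-t_0$ and $\Psi(t_\Psi)=\Phi(t_\Phi)$, while $\mathcal{K}_\Psi$ is the part of $\mathcal{K}_\Phi$ swept at times $>t_0$, so $\Per(\mathcal{K}_{\Phi(t_0)})$ increases monotonically as $t_0\to0^+$. The polygonal formula provides the uniform bound $\Per(\mathcal{K}_{\Phi(t_0)})\le 12\,t_\Phi+4\Per(\Phi(t_\Phi))<\infty$ (using $t_\Phi<\infty$ for bounded $\Phi$ and $\Per(\partial\Phi(t_0))\ge0$), so the monotone limit $\Per(\mathcal{K}_\Phi)$ exists and is finite; then $\Per(\partial\Phi)=\lim_{t_0\to0}\Per(\partial\Phi(t_0))$ exists as a difference of convergent terms. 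Passing to the limit $t_0\to0$ in the polygonal identity for $\Phi(t_0)$ yields the formula for $\Phi$. The only genuine analytic point here is this monotone-plus-bounded argument; the remainder is the bookkeeping above.
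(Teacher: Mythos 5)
Your proposal is correct and follows essentially the same route as the paper: the paper likewise applies Theorem \ref{12n} to the canonical polygon $\Phi(t)$, identifying $d_v$ with the vertex weights and $d_E$ with $-l'(E(t))$, to conclude that $\Per(\mathcal{K}_{\Phi(t)})+\Per(\partial\Phi(t))$ decreases at the constant rate $12$, evaluates the limit $4\Per(\Phi(t_\Phi))$ at the final time, and then uses the same monotone-plus-bounded argument to pass to the limit $t\to 0$. The only difference is organizational: you integrate the two quantities separately and split off the polygonal case explicitly, whereas the paper differentiates their sum directly along $\Phi(t)$, $t>0$, which is already polygonal.
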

By Theorem \ref{fan-description}, if $\Phi(t')$, $t'>0$, has an edge of a given slope then $\Phi(t)$ also has an edge of the same slope whenever $0<t<t'$. Thus $\Per(\partial\Phi)$ can be computed as the sum of the tropical lengths of the edges of $\Phi$ of rational slope.
In particular, if $\dd\Phi$ does not contain intervals of rational slope then $\Per(\partial\Phi)=0$.
\begin{proof}
Consider the quantity
\begin{equation}\label{eq-qua}
\Per(\mathcal{K}_{\Phi(t)})+\Per(\partial\Phi(t))
\end{equation}
as a function of $t$, $0<t<1_\Phi$.
By Theorem \ref{12n}, its derivative 
 is $-12$.
Indeed, during time $\delta$ the caustic $\mathcal{K}_{\Phi(t)}$ gets shorter by $\delta d_v$ at every vertex $v$ while the length of each edge $E$ of $\partial\Phi(t)$
decreases by $\delta d_E$. When $t\to t_\Phi$, both $\mathcal{K}_{\Phi(t)}$ and $\partial\Phi(t)$ converge to $\Phi(t_\Phi)$, which is either a point, or an interval of weight 2. Thus
$
\lim\limits_{t\to t_\Phi} l(t)=4\Per(\Phi(t_\Phi)),
$
so that we get 
\begin{equation}\label{formula-t}
\Per(\mathcal{K}_{\Phi(t)})+\Per(\partial\Phi(t))=12(t_\Phi-t)+4\Per(\Phi(t_\Phi)).
\end{equation}
The length $\Per(\mathcal{K}_{\Phi(t)})$ is a decreasing function bounded by $12 t_\Phi+4\Per(\Phi(t_\Phi))$, since $\Per(\partial\Phi(t))\ge 0$. Therefore the limit $\Per(\kf)<\infty$ exists. By \eqref{formula-t}, the limit $\Per(\dd\Phi)<\infty$ exists as well.
\end{proof}

\section{Age of a polygonal domain}
If $\Phi$ is compact then we may use its caustic $\kf$ to encode the shape of the domain $\Phi$.
For arbitrary small $\epsilon>0$, $\kf\cap\Phi(\epsilon)$ is a finite tree, and thus has finitely many leaves (1-valent vertices). Its edges can be given a metric by setting the primitive integer vector to be of length 1 (independently of the edge weight). Note that this length agrees with the time needed for the wave front to pass from one point on the edge to another, unless the edge is the final locus.

Let us find two leaves of $\kf\cap\Phi(\epsilon)$ the most distant from each other in this metric, and denote the midpoint of the broken line connecticng these points. Then $p\in\Phi(t_\Phi)$. If $p$ is a vertex of $\kf$ then $\Phi(t_\Phi)=\{p\}$.
Otherwise, $\Phi(t_\Phi)$ is the edge of $\kf$ containing $p$ as its midpoint. Thus, we may reconstruct the final locus $\Phi(t_\Phi)$. To reconstruct $\Phi(t)$ for $0<t<t_\Phi$ we look at the points of $\kf$ at distance $t_\Phi-t$ from $\Phi(t_\Phi)$,
and connect the points adjacent to the same components of $\Phi\setminus\kf$ by straight intervals.

Note that the caustic $\kf$ has natural parameterisation: we choose the length of $\Phi(t_\Phi)$ (if it is an interval), then by 
Propositions \ref{prop-point} and \ref{prop-edge}, we have a discrete set of choices for the edges of $\kf$ adjacent to $\Phi(t_\Phi)$, all of these choices are realisable as caustics of the polygons $\Phi(t_\Phi-\epsilon)$ for small $\epsilon>0$.
When we increase $\epsilon$, any edge of weight 1 may split into two according to Theorem  \ref{thm_nonfinal}. Once any pair (or $k$-tuple for $k>2$) develop an intersection point, we must stop, and cannot go further. 

Let $\Phi$ be an admissible domain,
and $\kf$ be its caustic. If $\Phi$ is a canonical tropical domain then the closure $\overline\kf$ is a locally finite graph, while the vertices of $\Phi$ are the leaves (1-valent vertices) of $\overline\kf$.
Let $\widetilde\kf$ be the (balanced) graph obtained by extending indefinitely (as infinite rays going to infinity) the edges adjacent to the leaves of $\overline\kf$. Let ${\mathcal S}$ be the set of intersection points of the rays.

For each $a>0$ and each ray of $\widetilde\kf\setminus\kf$ we consider the interval of points of {\em tropical distance} not greater than $a$ from the endpoint of the ray \footnote{I.e. of Euclidean distance $a$ multiplied by the length of the primitive integer vector in the direction of the ray}. Let $\kf(-a)$ denote the union of $\kf$ and the set of all such points (on all rays) while
Let $\Phi(-a)$ be the convex hull of $\kf(-a)$.

\begin{proposition}\label{negative-evolution}
Let $\Phi$ be a canonical tropical domain.
If $\Phi(-a)\cap{\mathcal S}=\emptyset$ then $\Phi(-a)$ is also a canonical tropical domain whose sides are parallel to the sides of $\Phi$ and such that ${\mathcal K}_{\Phi(-a)}=\kf(-a)$.
\end{proposition}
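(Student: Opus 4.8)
The plan is to identify the convex hull $\Phi(-a)=\operatorname{conv}(\kf(-a))$ with the intersection $\Psi$ of the \emph{edge} half-planes of $\Phi$ shifted outward by tropical distance $a$, and then to recover its combinatorial type and its caustic from the Huygens principle (Theorem~\ref{prop_additivefront}). For each side $E$ of $\Phi$ let $H_E=\{\lambda_E\ge c_E\}$ be its support half-plane and set $\Psi=\bigcap_E H_E(-a)=\bigcap_E\{\lambda_E\ge c_E-a\}$; these are locally finitely many rational-slope half-planes, so $\Psi$ is convex with sides parallel to those of $\Phi$.

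First I would establish a local lemma at each vertex $v$ of $\Phi$. Since $\Phi$ is canonical, the cone at $v$ is a canonical angle with primitive edge vectors $\nu_\pm\in N$, and by Corollary~\ref{An-single} its caustic is the single bissectrice ray, of primitive direction $w$. The perpendicularity computation in the proof of Proposition~\ref{perp-descr} gives $w\wedge\nu_\pm=\pm1$; since the outward conormal $\lambda_+$ has kernel $\R\nu_+$, this forces $\lambda_+(w)=\pm(\nu_+\wedge w)=\pm 1$, and normalizing so that $w$ points into $\Phi$ yields $\lambda_+(w)=\lambda_-(w)=1$. Consequently the endpoint $v'=v-aw$ of the backward extension (at tropical distance $a$ from $v$, as $w$ is primitive) satisfies $\lambda_\pm(v')=c_\pm-a$; that is, $v'$ is exactly the apex of the cone cut out near $v$ by the shifted half-planes $H_\pm(-a)$. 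So locally $\Psi$ is the canonical cone with apex $v'$ and the same slopes as $\Phi$ at $v$, and its bissectrice is the ray from $v'$ through $v$ prolonging the old one, which is precisely the local picture of $\kf(-a)$.

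Next I would assemble the global statement. Since $\kf\subset\Phi\subset\Psi$ and every $v'\in\Psi$, convexity yields $\kf(-a)\subset\Psi$, hence $\operatorname{conv}(\kf(-a))\subset\Psi$. For the reverse inclusion I must show the vertices of $\Psi$ are precisely the points $v'$, one per vertex of $\Phi$, with every edge keeping positive length. This is the main obstacle, and it is exactly what the hypothesis $\Phi(-a)\cap{\mathcal S}=\emptyset$ controls: as $a'$ increases over $[0,a]$ the candidate vertices $v'(a')$ slide along the backward bissectrices, and two adjacent ones can collide only when the corresponding rays of $\widetilde\kf$ cross, i.e. at a point of ${\mathcal S}$. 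Since $\operatorname{conv}(\kf(-a'))\subset\operatorname{conv}(\kf(-a))=\Phi(-a)$ misses ${\mathcal S}$ for every $a'\le a$, no such collision occurs, the cyclic edge-adjacency of $\Phi$ is preserved, and the vertices of $\Psi$ are exactly the $v'$. Therefore $\Psi=\operatorname{conv}(\{v'\})=\operatorname{conv}(\kf(-a))=\Phi(-a)$, and since each $v'$ inherits the canonical dual cone of $v$, this is a canonical tropical domain with sides parallel to those of $\Phi$.

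Finally I would identify the caustic via Huygens. As $\Phi(-a)$ is a canonical tropical domain whose edge half-planes are the $H_E(-a)$, Theorem~\ref{prop_additivefront} gives $\Phi(-a)(s)=\bigcap_E H_E(s-a)$ for $s>0$. For $0<s<a$ this is the same construction with parameter $a-s$, so its vertices are the $v'(a-s)$, which sweep out precisely the extension segments of $\kf(-a)$ as $s$ runs from $0$ to $a$; for $s>a$ it equals $\Phi(s-a)$, whose vertices sweep out $\kf$. Hence the locus of all vertices of $\Phi(-a)(s)$ together with the final locus is $\kf$ union the extensions, namely $\kf(-a)$, giving ${\mathcal K}_{\Phi(-a)}=\kf(-a)$; in particular $\Phi(-a)(a)=\bigcap_E H_E=\Phi$ by Lemma~\ref{lem_restorephi}, so the construction genuinely inverts the forward evolution. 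The one routine point to check here is that for $s>0$ the non-edge support half-planes of $\Phi(-a)$ are redundant in the propagation, which follows from Theorem~\ref{fan-description} exactly as in the proof of Proposition~\ref{prop_additivefront}.
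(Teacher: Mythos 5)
Your proof is correct and rests on the same mechanism as the paper's: at a canonical vertex the unique caustic ray is tropically perpendicular to both adjacent sides (Corollaries \ref{An-single} and \ref{cone-subdiv}), so moving the vertices backward at unit tropical speed along the bissectrices is the same as shifting every side half-plane outward at unit rate, and the only possible degeneration is a collision of adjacent vertices, i.e.\ a point of ${\mathcal S}$ lying in $\Phi(-a)$. The paper's proof states exactly this kinematically, citing Proposition \ref{part-interpr}; your version makes explicit what the paper leaves implicit, namely the identification $\Phi(-a)=\bigcap_E H_E(-a)$ and the Huygens-principle check that ${\mathcal K}_{\Phi(-a)}={\mathcal K}_\Phi(-a)$.
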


\begin{proof}
By Proposition \ref{part-interpr}, the vertices of $\Phi$ move with the unit speed with respect to the tropical distance. By Corollary \ref{cone-subdiv}, the trajectories of the vertices adjacent to the same side both form the tropical right angle with that side. Therefore, the sides of $\Phi(-a)$ remain parallel to the sides of $\Phi$ unless the length of the side vanishes. The latter event corresponds to a point of ${\mathcal S}$.
\end{proof}

Definine $a_{\max}\in [0,+\infty]$ to be the infimal value of $a$ such that $\Phi(-a)\cap{\mathcal S}\neq\emptyset$. 
Proposition \ref{negative-evolution} implies the following statement.
\begin{corollary}\label{coro-age}
For any $0<t<a_{\max}$, the domain $\Phi(-t)$ is canonical. The result of its time $t$ evolution is $\Phi$, i.e. $(\Phi(-t))(t)=\Phi$.\end{corollary}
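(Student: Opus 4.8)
The plan is to deduce both assertions from Proposition \ref{negative-evolution} and the particle interpretation of Proposition \ref{part-interpr}. First I would note the obvious monotonicity $\Phi(-a)\subseteq\Phi(-a')$ for $a<a'$: this holds because $\kf(-a)\subseteq\kf(-a')$ and the convex hull is monotone. Consequently the set $\{a>0:\Phi(-a)\cap\mathcal{S}\neq\emptyset\}$ is upward closed, so by the definition of $a_{\max}$ it is either $[a_{\max},+\infty)$ or $(a_{\max},+\infty)$. In either case $\Phi(-t)\cap\mathcal{S}=\emptyset$ for every $0<t<a_{\max}$, and Proposition \ref{negative-evolution} applies verbatim, already giving the first claim: $\Phi(-t)$ is canonical, its sides are parallel to those of $\Phi$, and $\mathcal{K}_{\Phi(-t)}=\kf(-t)$.

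To establish $(\Phi(-t))(t)=\Phi$ I would first locate the sides of $\Phi(-t)$. By Corollary \ref{cone-subdiv} each caustic ray issuing from a vertex of $\Phi$ forms a tropical right angle with the two adjacent sides, and by Proposition \ref{part-interpr} the vertex runs along this ray with unit tropical speed. A short computation with the tropical distance (the parallelogram spanned by a primitive side vector $\beta$ and a displacement $t\alpha$ along a perpendicular primitive direction $\alpha$ has area $t|\alpha\wedge\beta|=t$) shows that displacing both endpoints of a side $E$ of slope $\lambda$ outward by tropical distance $t$ moves the line carrying $E$ outward by tropical distance exactly $t$, keeping its slope. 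Hence the support half-plane of $\Phi(-t)$ along the side parallel to $E$ is $H_\lambda(-t)$ with $H_\lambda\in\mathcal{H}_\Phi$, and its forward propagation is $(H_\lambda(-t))(t)=H_\lambda$. Intersecting over the finitely many side slopes of the canonical (hence polygonal) domain $\Phi$ and using Lemma \ref{lem_restorephi} recovers $\bigcap_\lambda H_\lambda=\Phi$; since $(\Phi(-t))(t)$ is the intersection of \emph{all} support half-planes of $\Phi(-t)$, this already yields $(\Phi(-t))(t)\subseteq\Phi$.

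For the reverse inclusion I would run the forward evolution and track the vertices. Because $\mathcal{K}_{\Phi(-t)}=\kf(-t)$ consists of $\kf$ together with the extension segments, each of tropical length exactly $t$, and because $t<a_{\max}$ forces $\Phi(-t)\cap\mathcal{S}=\emptyset$, the particle interpretation of Proposition \ref{part-interpr} shows that throughout the interval $0<s<t$ each vertex of $\Phi(-t)$ travels inward along its own extension segment without colliding with any other, arriving at the corresponding vertex of $\Phi$ precisely at $s=t$; meanwhile Theorem \ref{fan-description} (the inclusion $\fan(\Phi(-t)(s))\subseteq\fan(\Phi(-t))$) forbids any new side slope from appearing. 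Thus $\partial(\Phi(-t))(t)$ is the polygon with the side slopes of $\Phi$ and with vertex set exactly the vertices of $\Phi$, i.e.\ $\partial\Phi$, which gives $\Phi\subseteq(\Phi(-t))(t)$ and hence equality. \textbf{The main obstacle} is precisely this combinatorial control on the open interval $(0,t)$: one must rule out premature vertex collisions and the birth of new sides, and it is the hypothesis $t<a_{\max}$ together with the monotonicity of $\fan$ under evolution that supplies exactly this; Huygens's principle (Theorem \ref{prop_additivefront}) can be invoked to reduce the verification to arbitrarily short time steps should a more careful induction be preferred.
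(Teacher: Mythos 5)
Your proof is correct and takes essentially the same route as the paper: the paper states this corollary as an immediate consequence of Proposition \ref{negative-evolution}, and your argument is a detailed unpacking of exactly that implication, using the same supporting facts (Proposition \ref{part-interpr}, Corollary \ref{cone-subdiv}, Theorem \ref{fan-description}). The only slip --- treating the side slopes as finitely many, which may fail for unbounded canonical domains --- is harmless, since both of your inclusions work verbatim with the intersection taken over all side slopes.
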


\newcommand{\age}{\operatorname{age}}
\begin{definition}
Let $\Phi\subset\nr$ be an admissible domain. The {\em age} of $\Phi$ is 
\begin{equation}\label{age-defn}
\age(\Phi)=\sup\{a\ge0\ |\ \exists\  
 \widetilde\Phi\subset\nr\ :\ \Phi=\widetilde\Phi(a)\},
\end{equation}
here $\widetilde\Phi\subset\nr$ is an admissible domain.
\end{definition}

\begin{proposition}
If $\Phi\subset\nr$ is a tropical canonical domain then $\age(\Phi)=a_{\max}$.
\end{proposition}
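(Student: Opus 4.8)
The plan is to prove the two inequalities $\age(\Phi)\ge a_{\max}$ and $\age(\Phi)\le a_{\max}$ separately. The first is immediate from Corollary \ref{coro-age}: for every $t$ with $0<t<a_{\max}$ the domain $\Phi(-t)$ is admissible and satisfies $(\Phi(-t))(t)=\Phi$, so $t$ lies in the set over which the supremum in \eqref{age-defn} is taken, and letting $t\uparrow a_{\max}$ gives $\age(\Phi)\ge a_{\max}$. For the reverse inequality it suffices to show that no \emph{canonical} domain $\Psi$ satisfies $\Psi(s)=\Phi$ with $s>a_{\max}$. Indeed, if $\widetilde\Phi$ is any admissible preimage with $\widetilde\Phi(a)=\Phi$, then for small $\epsilon>0$ the domain $\widetilde\Phi(\epsilon)$ is canonical by Proposition \ref{prop_positiveshiftispolygon} and Theorem \ref{fan-description}, and $\widetilde\Phi(\epsilon)(a-\epsilon)=\Phi$ by the Huygens principle (Theorem \ref{prop_additivefront}); once the canonical case is known, this forces $a-\epsilon\le a_{\max}$ for all small $\epsilon>0$, hence $a\le a_{\max}$.

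So let $\Psi$ be canonical with $\Psi(s)=\Phi$. The heart of the argument is the claim that $\Psi(s-u)=\Phi(-u)$ for all $0\le u<a_{\max}$, where $\Phi(-u)$ is the negative evolution of Proposition \ref{negative-evolution}. I would prove this by tracking vertices. By Corollary \ref{cone-subdiv} and Proposition \ref{part-interpr} each vertex of a canonical polygon moves, under the wave front evolution, along the bisectrice ray of its angle, that is, along an edge of $\widetilde\kf$; two vertices can merge only at an intersection of such rays, i.e.\ at a point of $\mathcal S$. Running time backward from $\Phi$, each vertex of $\Phi$ therefore retraces its bisectrice ray, and since $a_{\max}$ is by definition the first backward time at which $\Phi(-a)$ meets $\mathcal S$, no merge occurs for $u<a_{\max}$. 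Consequently every vertex of $\Phi$ persists backward (it cannot have been created by a merge at backward distance below $a_{\max}$), and $\Psi(s-u)$ carries no additional vertices (an extra vertex would have to collide away before time $s$, producing a point of $\mathcal S$ at backward distance strictly less than $a_{\max}$). As a convex domain is determined by its vertex set together with its unbounded directions, the vertices of $\Psi(s-u)$ are exactly those of $\Phi$ translated back a distance $u$ along their rays, which is precisely $\Phi(-u)$.

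Passing to the limit $u\uparrow a_{\max}$, continuity of $t\mapsto\Psi(t)$ and of the construction of $\Phi(-u)$ yields $\Psi(s-a_{\max})=\Psi_0$, where $\Psi_0=\lim_{u\uparrow a_{\max}}\Phi(-u)$ is the canonical polygon in which, by the vanishing-side phenomenon recorded in the proof of Proposition \ref{negative-evolution}, some edge $E$ of $\Phi$ has shrunk to a point. By that same proposition the sides of each $\Phi(-u)$ are parallel to the sides of $\Phi$, so the conormal of $E$ is a ray of $\fan(\Phi)$, whereas it is absent from $\fan(\Psi_0)$. Since $s-a_{\max}>0$, the Huygens principle gives $\Psi_0(a_{\max})=\Psi(s)=\Phi$, while Theorem \ref{fan-description} forces $\fan(\Phi)\subset\fan(\Psi_0)$. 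This contradicts the absence of the conormal of $E$ from $\fan(\Psi_0)$, so no canonical preimage exists for $s>a_{\max}$, completing the proof.

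The main obstacle is the rigidity claim $\Psi(s-u)=\Phi(-u)$: one must argue carefully that the backward evolution of a canonical domain is \emph{uniquely forced} before the first collision, controlling both the persistence of every vertex of $\Phi$ and the absence of spurious vertices in $\Psi(s-u)$. The clean way to organize this is to identify each vertex trajectory with an edge of $\widetilde\kf$ and each merge with a point of $\mathcal S$, so that the defining property of $a_{\max}$ governs exactly when rigidity can first fail. The only other point needing care is the continuity of $t\mapsto\Psi(t)$ used in the limit $u\uparrow a_{\max}$, which follows from the description of $\Psi(t)$ as an intersection of continuously shifting half-planes.
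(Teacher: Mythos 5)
Your skeleton is sound and matches the paper's: the inequality $\age(\Phi)\ge a_{\max}$ via Corollary \ref{coro-age}, and the reduction of the upper bound to \emph{canonical} preimages via Proposition \ref{prop_positiveshiftispolygon}, Theorem \ref{fan-description} and the Huygens principle (Theorem \ref{prop_additivefront}), are all correct. The gap is exactly at the point you yourself flag as the main obstacle: the rigidity claim $\Psi(s-u)=\Phi(-u)$ for $0\le u<a_{\max}$ is \emph{false}, and the argument you give for it does not work. You assert that two vertices of the evolving polygon can merge only at a point of ${\mathcal S}$. But ${\mathcal S}$ consists of intersection points of the extended rays of $\widetilde\kf$, i.e.\ of the backward trajectories of the vertices of $\Phi$ only. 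An interim collision (Theorem \ref{thm_nonfinal}) in the history of $\Psi$ involves two particles that do \emph{not} both survive to time $s$, and the trajectories of such particles are edges of ${\mathcal K}_\Psi$ that are not contained in $\widetilde\kf$. Concretely: a weight-1 (right-angle) vertex $v$ of $\Phi$ can, viewed backward in time, split at an \emph{arbitrary} backward time $u_0\in(0,s)$ into a weight-1 and a weight-$n$ vertex; the split point lies in the relative interior of the single ray of $\widetilde\kf\setminus\kf$ through $v$, not at an intersection of two such rays, so it need not belong to ${\mathcal S}$. For $u>u_0$ neither of the two new vertices lies on $\widetilde\kf$, so both of your claims --- persistence of every vertex of $\Phi$ along its ray, and absence of extra vertices in $\Psi(s-u)$ --- fail. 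This is not a removable technicality: the remark immediately following this proposition in the paper states that whenever $\Phi$ has at least one right-angle vertex, its time-$t$ past is non-unique for \emph{every} $0<t<\age(\Phi)$, which directly contradicts your rigidity claim.

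What is missing is an argument covering canonical preimages $\Psi$ that are not of the form $\Phi(-t)$, and the paper supplies it quantitatively rather than by rigidity: $a_{\max}$ equals the minimum, over the sides of $\Phi$, of the ratio of the tropical side length to its rate of shrinking under backward evolution (the rates being the integers of Proposition \ref{prop-lprime}); and each interim collision, viewed backward, introduces a new side while making the two adjacent sides shrink \emph{strictly faster}, so inserting such collisions into the past can only decrease the time for which a backward evolution can run before some side length reaches zero. Hence no canonical preimage exists at time $s>a_{\max}$, rigid or not. Your concluding fan-inclusion contradiction via Theorem \ref{fan-description} is fine as far as it goes, but it only rules out preimages equal to $\Phi(-u)$; as written, your proof establishes $\age(\Phi)\ge a_{\max}$ together with the upper bound for ``rigid'' pasts only, which is strictly weaker than the proposition.
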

\begin{proof}
Consider a canonical tropical domain $\widetilde\Phi$ such that $\widetilde\Phi(t)=\Phi$, $t>0$.
Then, by the Huygens principle, we have ${\mathcal K}_{\tilde{\Phi}}\supset\kf$. If ${\mathcal K}_{\tilde{\Phi}}\setminus\kf$ consists of disjoint rays then $\widetilde\Phi=\Phi(-t)$, and thus $t\le a_{\max}$. The value $a_{\max}$ can be computed as the minimum of the ratio of the tropical length of sides to the rate of decreasing over all sides of $\Phi$. Each interim collision (described by Theorem \ref{thm_nonfinal} and viewed backwards in time) results in introducing a new side with an increasing length, but makes the adjacent sides to decrease faster. Thus it cannot increase the age of the polygon. 
%
%
\end{proof}

As $\Phi(0)=\Phi$, we have $\age(\Phi)\ge 0$. By Theorem \ref{fan-description}, $\age(\Phi)=0$ unless $\Phi$ is a canonical tropical domain. 

Conversely, if $\Phi$ is canonical and bounded then $\age(\Phi)>0$. 
Indeed, in this case the caustic $\kf$ is a finite graph while $\dd\Phi\cap\overline\kf$ consists of 1-valent vertices.
Thus $\Phi(-\epsilon)\cap{\mathcal S}=\emptyset$ for a small $\epsilon>0$, and by Corollary \ref{coro-age} we have $\age(\Phi)\ge\epsilon$.

\begin{remark}
There exist unbounded canonical tropical domains $\Phi$ of zero age, since the length of the sides might go to zero.
\end{remark}

\begin{remark}
By Theorem \ref{thm_nonfinal}, there are no interim collisions resulting in a particle of higher weight (greater than 1). Therefore, if $\Phi$ is a canonical tropical domain such that none of its vertex corresponds to the tropical right angle then it has the unique past, i.e. for any $\widetilde{\Phi}$ and $t>0$ such that $\widetilde{\Phi}(t)=\Phi$ we have $\widetilde{\Phi}=\Phi(-t)$.

In the same time if $\Phi$ is a canonical domain with at least one right angle vertex then its time $t$ past is not unique for any $0<t<\age(\Phi)$.
\end{remark}




\section{Examples}
Figures \ref{fig-triangle}, \ref{fig-5gon}, \ref{fig-9gon} represent the wave fronts at different time for a triangle, a pentagon and an enneagon.
The depicted polygons are not canonical tropical domains, their vertices yield several vertices of the wave front after eveloution in arbitrary small time.
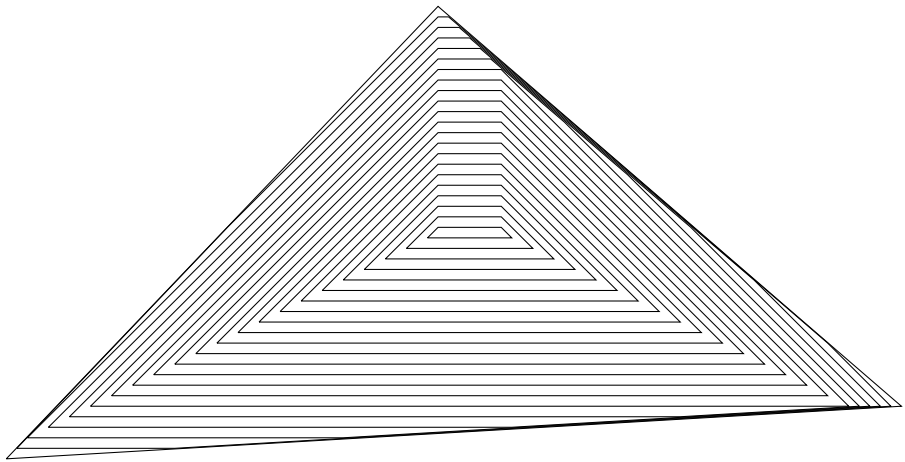
\begin{figure}
\begin{tikzpicture}
\begin{scope}[scale=0.14]
\draw(0,39)--(41,82)--(85,44)--(0,39);
\draw(1,40)--(21,61)--(41,81)--(42,81)--(56,69)--(78,50)--(84,44)--(16,40)--(1,40);
\draw(2,41)--(41,80)--(43,80)--(71,56)--(83,44)--(32,41)--(2,41);
\draw(4,42)--(41,79)--(44,79)--(65,61)--(82,44)--(48,42)--(4,42);
\draw(6,43)--(41,78)--(45,78)--(59,66)--(81,44)--(64,43)--(6,43);
\draw(8,44)--(41,77)--(46,77)--(53,71)--(80,44)--(8,44);
\draw(10,45)--(41,76)--(47,76)--(78,45)--(10,45);
\draw(12,46)--(41,75)--(47,75)--(76,46)--(12,46);
\draw(14,47)--(41,74)--(47,74)--(74,47)--(14,47);
\draw(16,48)--(41,73)--(47,73)--(72,48)--(16,48);
\draw(18,49)--(41,72)--(47,72)--(70,49)--(18,49);
\draw(20,50)--(41,71)--(47,71)--(68,50)--(20,50);
\draw(22,51)--(41,70)--(47,70)--(66,51)--(22,51);
\draw(24,52)--(41,69)--(47,69)--(64,52)--(24,52);
\draw(26,53)--(41,68)--(47,68)--(62,53)--(26,53);
\draw(28,54)--(41,67)--(47,67)--(60,54)--(28,54);
\draw(30,55)--(41,66)--(47,66)--(58,55)--(30,55);
\draw(32,56)--(41,65)--(47,65)--(56,56)--(32,56);
\draw(34,57)--(41,64)--(47,64)--(54,57)--(34,57);
\draw(36,58)--(41,63)--(47,63)--(52,58)--(36,58);
\draw(38,59)--(41,62)--(47,62)--(50,59)--(38,59);
\draw(40,60)--(41,61)--(47,61)--(48,60)--(40,60);
\end{scope}
\end{tikzpicture}
\caption{A polygonal tropical wave front of a lattice triangle.}
\label{fig-triangle}
\end{figure}
\begin{figure}
\rotatebox{90}{
\begin{tikzpicture}
\begin{scope}[scale=0.13]
\draw(31,82)--(61,93)--(92,53)--(95,4)--(32,57)--(31,82);
\draw(32,58)--(32,82)--(34,83)--(42,86)--(53,90)--(59,92)--(61,92)--(64,89)--(85,62)--(91,54)--(92,52)--(94,20)--(94,5)--(76,20)--(38,52)--(32,58);
\draw(33,58)--(33,82)--(37,84)--(45,87)--(57,91)--(61,91)--(67,85)--(81,67)--(90,55)--(92,51)--(93,35)--(93,6)--(63,31)--(44,47)--(33,58);
\draw(34,58)--(34,82)--(40,85)--(55,90)--(61,90)--(70,81)--(77,72)--(89,56)--(92,50)--(92,7)--(50,42)--(34,58);
\draw(35,58)--(35,82)--(41,85)--(53,89)--(61,89)--(73,77)--(88,57)--(91,51)--(91,8)--(55,38)--(35,58);
\draw(36,58)--(36,82)--(42,85)--(51,88)--(61,88)--(75,74)--(87,58)--(90,52)--(90,9)--(60,34)--(36,58);
\draw(37,58)--(37,82)--(43,85)--(49,87)--(61,87)--(77,71)--(86,59)--(89,53)--(89,10)--(65,30)--(37,58);
\draw(38,58)--(38,82)--(44,85)--(47,86)--(61,86)--(79,68)--(85,60)--(88,54)--(88,11)--(70,26)--(38,58);
\draw(39,58)--(39,82)--(45,85)--(61,85)--(81,65)--(84,61)--(87,55)--(87,12)--(75,22)--(39,58);
\draw(40,58)--(40,82)--(44,84)--(61,84)--(83,62)--(86,56)--(86,13)--(80,18)--(40,58);
\draw(41,58)--(41,82)--(43,83)--(61,83)--(83,61)--(85,57)--(85,14)--(41,58);
\draw(42,58)--(42,82)--(61,82)--(83,60)--(84,58)--(84,16)--(42,58);
\draw(43,58)--(43,81)--(61,81)--(83,59)--(83,18)--(43,58);
\draw(44,58)--(44,80)--(61,80)--(82,59)--(82,20)--(44,58);
\draw(45,58)--(45,79)--(61,79)--(81,59)--(81,22)--(45,58);
\draw(46,58)--(46,78)--(61,78)--(80,59)--(80,24)--(46,58);
\draw(47,58)--(47,77)--(61,77)--(79,59)--(79,26)--(47,58);
\draw(48,58)--(48,76)--(61,76)--(78,59)--(78,28)--(48,58);
\draw(49,58)--(49,75)--(61,75)--(77,59)--(77,30)--(49,58);
\draw(50,58)--(50,74)--(61,74)--(76,59)--(76,32)--(50,58);
\draw(51,58)--(51,73)--(61,73)--(75,59)--(75,34)--(51,58);
\draw(52,58)--(52,72)--(61,72)--(74,59)--(74,36)--(52,58);
\draw(53,58)--(53,71)--(61,71)--(73,59)--(73,38)--(53,58);
\draw(54,58)--(54,70)--(61,70)--(72,59)--(72,40)--(54,58);
\draw(55,58)--(55,69)--(61,69)--(71,59)--(71,42)--(55,58);
\draw(56,58)--(56,68)--(61,68)--(70,59)--(70,44)--(56,58);
\draw(57,58)--(57,67)--(61,67)--(69,59)--(69,46)--(57,58);
\draw(58,58)--(58,66)--(61,66)--(68,59)--(68,48)--(58,58);
\draw(59,58)--(59,65)--(61,65)--(67,59)--(67,50)--(59,58);
\draw(60,58)--(60,64)--(61,64)--(66,59)--(66,52)--(60,58);
\draw(61,58)--(61,63)--(65,59)--(65,54)--(61,58);
\draw(62,58)--(62,61)--(64,59)--(64,56)--(62,58);
\draw(63,58)--(63,59)--(63,58);
\end{scope}
\end{tikzpicture}}
\caption{A polygonal tropical wave front of a lattice pentagon.}
\label{fig-5gon}
\end{figure}
\begin{figure}
\begin{tikzpicture}
\begin{scope}[scale=0.09]
\draw(0,3)--(1,12)--(14,93)--(59,99)--(97,97)--(96,44)--(88,18)--(57,11)--(4,0)--(0,3);
\draw(1,3)--(1,11)--(2,18)--(10,68)--(14,92)--(15,93)--(22,94)--(52,98)--(77,98)--(96,97)--(96,45)--(95,41)--(91,28)--(88,19)--(87,18)--(79,16)--(52,10)--(28,5)--(8,1)--(3,1)--(1,3);
\draw(2,3)--(2,17)--(3,24)--(7,49)--(14,91)--(16,93)--(30,95)--(45,97)--(95,97)--(95,42)--(94,38)--(88,20)--(86,18)--(74,15)--(47,9)--(12,2)--(3,2)--(2,3);
\draw(3,3)--(3,23)--(4,30)--(14,90)--(17,93)--(38,96)--(94,96)--(94,39)--(88,21)--(85,18)--(69,14)--(51,10)--(16,3)--(3,3);
\draw(4,4)--(4,29)--(14,89)--(18,93)--(32,95)--(93,95)--(93,37)--(88,22)--(84,18)--(64,13)--(55,11)--(20,4)--(4,4);
\draw(5,5)--(5,34)--(14,88)--(19,93)--(26,94)--(92,94)--(92,35)--(88,23)--(83,18)--(59,12)--(24,5)--(5,5);
\draw(6,6)--(6,39)--(14,87)--(20,93)--(91,93)--(91,33)--(88,24)--(82,18)--(58,12)--(28,6)--(6,6);
\draw(7,7)--(7,44)--(14,86)--(20,92)--(90,92)--(90,31)--(88,25)--(81,18)--(57,12)--(32,7)--(7,7);
\draw(8,8)--(8,49)--(14,85)--(20,91)--(89,91)--(89,29)--(88,26)--(80,18)--(56,12)--(36,8)--(8,8);
\draw(9,9)--(9,54)--(14,84)--(20,90)--(88,90)--(88,27)--(79,18)--(55,12)--(40,9)--(9,9);
\draw(10,10)--(10,59)--(14,83)--(20,89)--(87,89)--(87,27)--(78,18)--(54,12)--(44,10)--(10,10);
\draw(11,11)--(11,64)--(14,82)--(20,88)--(86,88)--(86,27)--(77,18)--(53,12)--(48,11)--(11,11);
\draw(12,12)--(12,69)--(14,81)--(20,87)--(85,87)--(85,27)--(76,18)--(52,12)--(12,12);
\draw(13,13)--(13,74)--(14,80)--(20,86)--(84,86)--(84,27)--(75,18)--(55,13)--(13,13);
\draw(14,14)--(14,79)--(20,85)--(83,85)--(83,27)--(74,18)--(58,14)--(14,14);
\draw(15,15)--(15,79)--(20,84)--(82,84)--(82,27)--(73,18)--(61,15)--(15,15);
\draw(16,16)--(16,79)--(20,83)--(81,83)--(81,27)--(72,18)--(64,16)--(16,16);
\draw(17,17)--(17,79)--(20,82)--(80,82)--(80,27)--(71,18)--(67,17)--(17,17);
\draw(18,18)--(18,79)--(20,81)--(79,81)--(79,27)--(70,18)--(18,18);
\draw(19,19)--(19,79)--(20,80)--(78,80)--(78,27)--(70,19)--(19,19);
\draw(20,20)--(20,79)--(77,79)--(77,27)--(70,20)--(20,20);
\draw(21,21)--(21,78)--(76,78)--(76,27)--(70,21)--(21,21);
\draw(22,22)--(22,77)--(75,77)--(75,27)--(70,22)--(22,22);
\draw(23,23)--(23,76)--(74,76)--(74,27)--(70,23)--(23,23);
\draw(24,24)--(24,75)--(73,75)--(73,27)--(70,24)--(24,24);
\draw(25,25)--(25,74)--(72,74)--(72,27)--(70,25)--(25,25);
\draw(26,26)--(26,73)--(71,73)--(71,27)--(70,26)--(26,26);
\draw(27,27)--(27,72)--(70,72)--(70,27)--(27,27);
\draw(28,28)--(28,71)--(69,71)--(69,28)--(28,28);
\draw(29,29)--(29,70)--(68,70)--(68,29)--(29,29);
\draw(30,30)--(30,69)--(67,69)--(67,30)--(30,30);
\draw(31,31)--(31,68)--(66,68)--(66,31)--(31,31);
\draw(32,32)--(32,67)--(65,67)--(65,32)--(32,32);
\draw(33,33)--(33,66)--(64,66)--(64,33)--(33,33);
\draw(34,34)--(34,65)--(63,65)--(63,34)--(34,34);
\draw(35,35)--(35,64)--(62,64)--(62,35)--(35,35);
\draw(36,36)--(36,63)--(61,63)--(61,36)--(36,36);
\draw(37,37)--(37,62)--(60,62)--(60,37)--(37,37);
\draw(38,38)--(38,61)--(59,61)--(59,38)--(38,38);
\draw(39,39)--(39,60)--(58,60)--(58,39)--(39,39);
\draw(40,40)--(40,59)--(57,59)--(57,40)--(40,40);
\draw(41,41)--(41,58)--(56,58)--(56,41)--(41,41);
\draw(42,42)--(42,57)--(55,57)--(55,42)--(42,42);
\draw(43,43)--(43,56)--(54,56)--(54,43)--(43,43);
\draw(44,44)--(44,55)--(53,55)--(53,44)--(44,44);
\draw(45,45)--(45,54)--(52,54)--(52,45)--(45,45);
\draw(46,46)--(46,53)--(51,53)--(51,46)--(46,46);
\draw(47,47)--(47,52)--(50,52)--(50,47)--(47,47);
\draw(48,48)--(48,51)--(49,51)--(49,48)--(48,48);
\end{scope}
\end{tikzpicture}
\caption{A polygonal tropical wave front of a lattice enneagon.}
\label{fig-9gon}
\end{figure}
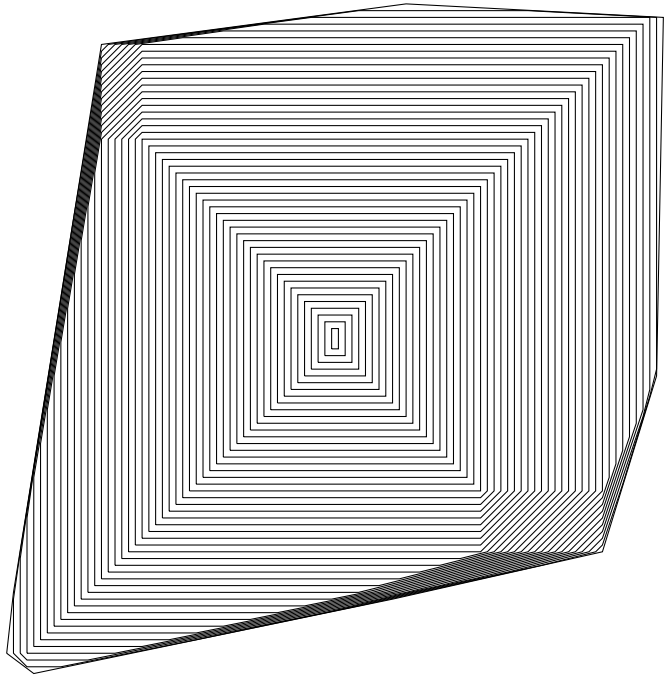

Figures \ref{fig-ellipse} and \ref{fig-ellipse-waves} depict the tropical caustic of an ellipse and the corresponding wave fronts for its evolution after a small time. Note that when the starting contour is smooth (e.g. an ellipse), the caustics has infinitely many branching points accumulating at the starting contour.
\input Fig-ellipse
\begin{figure}
\begin{tikzpicture}
\begin{scope}[scale=0.13]
\node at (0,36) {\ };
\draw(-49,-6)--(-49,6)--(-46,12)--(-38,20)--(-35,22)--(-27,26)--(-21,28)--(-17,29)--(-11,30)--(11,30)--(17,29)--(21,28)--(27,26)--(35,22)--(38,20)--(46,12)--(49,6)--(49,-6)--(46,-12)--(38,-20)--(35,-22)--(27,-26)--(21,-28)--(17,-29)--(11,-30)--(-11,-30)--(-17,-29)--(-21,-28)--(-27,-26)--(-35,-22)--(-38,-20)--(-46,-12)--(-49,-6);
\draw(-48,-7)--(-48,7)--(-46,11)--(-36,21)--(-26,26)--(-20,28)--(-16,29)--(16,29)--(20,28)--(26,26)--(36,21)--(46,11)--(48,7)--(48,-7)--(46,-11)--(36,-21)--(26,-26)--(20,-28)--(16,-29)--(-16,-29)--(-20,-28)--(-26,-26)--(-36,-21)--(-46,-11)--(-48,-7);
\draw(-47,-8)--(-47,8)--(-46,10)--(-35,21)--(-25,26)--(-19,28)--(19,28)--(25,26)--(35,21)--(46,10)--(47,8)--(47,-8)--(46,-10)--(35,-21)--(25,-26)--(19,-28)--(-19,-28)--(-25,-26)--(-35,-21)--(-46,-10)--(-47,-8);
\draw(-46,-9)--(-46,9)--(-34,21)--(-24,26)--(-21,27)--(21,27)--(24,26)--(34,21)--(46,9)--(46,-9)--(34,-21)--(24,-26)--(21,-27)--(-21,-27)--(-24,-26)--(-34,-21)--(-46,-9);
\draw(-45,-9)--(-45,9)--(-33,21)--(-23,26)--(23,26)--(33,21)--(45,9)--(45,-9)--(33,-21)--(23,-26)--(-23,-26)--(-33,-21)--(-45,-9);
\draw(-44,-9)--(-44,9)--(-32,21)--(-24,25)--(24,25)--(32,21)--(44,9)--(44,-9)--(32,-21)--(24,-25)--(-24,-25)--(-32,-21)--(-44,-9);
\draw(-43,-9)--(-43,9)--(-31,21)--(-25,24)--(25,24)--(31,21)--(43,9)--(43,-9)--(31,-21)--(25,-24)--(-25,-24)--(-31,-21)--(-43,-9);
\draw(-42,-9)--(-42,9)--(-30,21)--(-26,23)--(26,23)--(30,21)--(42,9)--(42,-9)--(30,-21)--(26,-23)--(-26,-23)--(-30,-21)--(-42,-9);
\draw(-41,-9)--(-41,9)--(-29,21)--(-27,22)--(27,22)--(29,21)--(41,9)--(41,-9)--(29,-21)--(27,-22)--(-27,-22)--(-29,-21)--(-41,-9);
\draw(-40,-9)--(-40,9)--(-28,21)--(28,21)--(40,9)--(40,-9)--(28,-21)--(-28,-21)--(-40,-9);
\draw(-39,-9)--(-39,9)--(-28,20)--(28,20)--(39,9)--(39,-9)--(28,-20)--(-28,-20)--(-39,-9);
\draw(-38,-9)--(-38,9)--(-28,19)--(28,19)--(38,9)--(38,-9)--(28,-19)--(-28,-19)--(-38,-9);
\draw(-37,-9)--(-37,9)--(-28,18)--(28,18)--(37,9)--(37,-9)--(28,-18)--(-28,-18)--(-37,-9);
\draw(-36,-9)--(-36,9)--(-28,17)--(28,17)--(36,9)--(36,-9)--(28,-17)--(-28,-17)--(-36,-9);
\draw(-35,-9)--(-35,9)--(-28,16)--(28,16)--(35,9)--(35,-9)--(28,-16)--(-28,-16)--(-35,-9);
\draw(-34,-9)--(-34,9)--(-28,15)--(28,15)--(34,9)--(34,-9)--(28,-15)--(-28,-15)--(-34,-9);
\draw(-33,-9)--(-33,9)--(-28,14)--(28,14)--(33,9)--(33,-9)--(28,-14)--(-28,-14)--(-33,-9);
\draw(-32,-9)--(-32,9)--(-28,13)--(28,13)--(32,9)--(32,-9)--(28,-13)--(-28,-13)--(-32,-9);
\draw(-31,-9)--(-31,9)--(-28,12)--(28,12)--(31,9)--(31,-9)--(28,-12)--(-28,-12)--(-31,-9);
\draw(-30,-9)--(-30,9)--(-28,11)--(28,11)--(30,9)--(30,-9)--(28,-11)--(-28,-11)--(-30,-9);
\draw(-29,-9)--(-29,9)--(-28,10)--(28,10)--(29,9)--(29,-9)--(28,-10)--(-28,-10)--(-29,-9);
\draw(-28,-9)--(-28,9)--(28,9)--(28,-9)--(-28,-9);
\draw(-27,-8)--(-27,8)--(27,8)--(27,-8)--(-27,-8);
\draw(-26,-7)--(-26,7)--(26,7)--(26,-7)--(-26,-7);
\draw(-25,-6)--(-25,6)--(25,6)--(25,-6)--(-25,-6);
\draw(-24,-5)--(-24,5)--(24,5)--(24,-5)--(-24,-5);
\draw(-23,-4)--(-23,4)--(23,4)--(23,-4)--(-23,-4);
\draw(-22,-3)--(-22,3)--(22,3)--(22,-3)--(-22,-3);
\draw(-21,-2)--(-21,2)--(21,2)--(21,-2)--(-21,-2);
\draw(-20,-1)--(-20,1)--(20,1)--(20,-1)--(-20,-1);
\draw(-19,0)--(19,0)--(-19,0);
\end{scope}
\end{tikzpicture}
\caption{Polygonal tropical wave front for a lattice approximation of a rescaling of the same ellipse.}
\label{fig-ellipse-waves}
\end{figure}
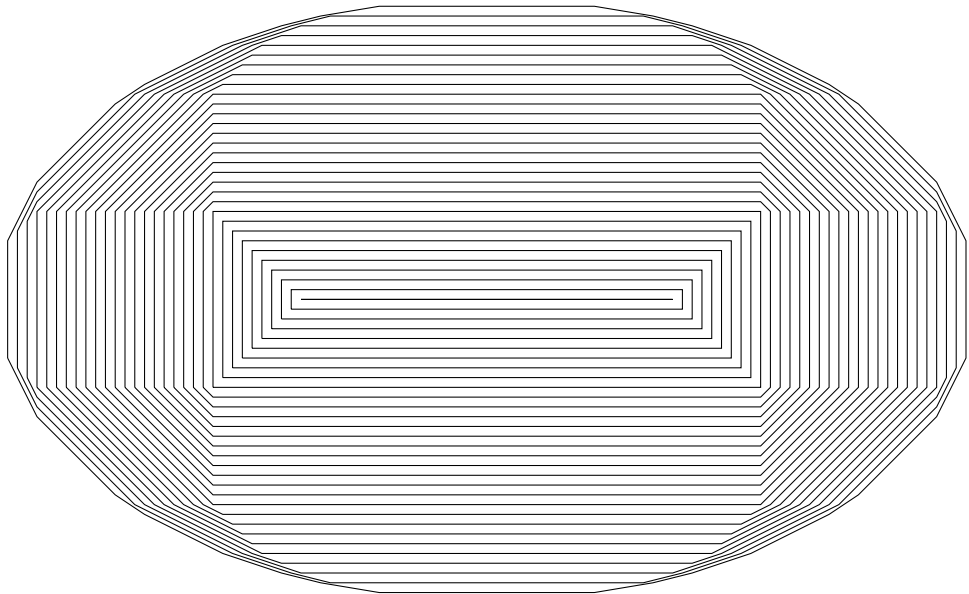

Figure \ref{fig-cubic} depicts the tropical caustic for the convex bounded domain in the complement of a nodal cubic curve.
The large vertical edge in the right half of the domain intersects the horizontal edge of weight 2. The horizontal edge to the right of this intersection point is the trajectory of the $A_1$-vertex of the wave front.
The horizontal edge to the left of this intersection point is the final locus.
\begin{figure}
\begin{tikzpicture}
\begin{scope}[scale=8]

\draw[ultra thick](0,0)--(-0.61509982054024948,0);

\draw[thick,scale=1, domain=-1.12:1.12, smooth, variable=\t] plot ({\t*\t-1}, {\t*(\t*\t-1)});

\draw(-0.748731263,0.372122852513)--(-0.734631914601,0.376822635313);
\draw(-0.748731263,0.372122852513)--(-0.759888532171,0.372122852513);
\draw(-0.769525139018,0.361725914504)--(-0.748731263,0.372122852513);
\draw(-0.769525139018,0.361725914504)--(-0.788796531558,0.361725914504);
\draw(-0.800285005725,0.330966047797)--(-0.769525139018,0.361725914504);
\draw(-0.840954285046,0.330966047797)--(-0.830807885577,0.341112447266);
\draw(-0.840954285046,0.330966047797)--(-0.855605328548,0.323640526046);
\draw(-0.800285005725,0.330966047797)--(-0.840954285046,0.330966047797);
\draw(-0.800285005725,0.185185185185)--(-0.800285005725,0.330966047797);
\draw(-0.918383099822,0.260161385432)--(-0.91241367943,0.269115516019);
\draw(-0.927426605737,0.242074373601)--(-0.918383099822,0.260161385432);
\draw(-0.927426605737,0.242074373601)--(-0.939146189893,0.230354789445);
\draw(-0.927426605737,0.185185185185)--(-0.927426605737,0.242074373601);
\draw(-0.966589644756,0.173956564089)--(-0.963333245098,0.183725763062);
\draw(-0.966589644756,0.146022146167)--(-0.966589644756,0.173956564089);
\draw(-0.981471151527,0.116259132623)--(-0.981471151527,0.132418221737);
\draw(-0.981471151527,0.116259132623)--(-0.988369888322,0.09556292224);
\draw(-0.966589644756,0.146022146167)--(-0.981471151527,0.116259132623);
\draw(-0.927426605737,0.185185185185)--(-0.966589644756,0.146022146167);
\draw(-0.800285005725,0.185185185185)--(-0.927426605737,0.185185185185);
\draw(-0.61509982054,0.)--(-0.800285005725,0.185185185185);
\draw(-0.981471151527,-0.116259132623)--(-0.988369888322,-0.09556292224);
\draw(-0.981471151527,-0.116259132623)--(-0.981471151527,-0.132418221737);
\draw(-0.966589644756,-0.146022146167)--(-0.981471151527,-0.116259132623);
\draw(-0.966589644756,-0.173956564089)--(-0.963333245098,-0.183725763062);
\draw(-0.966589644756,-0.146022146167)--(-0.966589644756,-0.173956564089);
\draw(-0.927426605737,-0.185185185185)--(-0.966589644756,-0.146022146167);
\draw(-0.927426605737,-0.242074373601)--(-0.939146189893,-0.230354789445);
\draw(-0.918383099822,-0.260161385432)--(-0.91241367943,-0.269115516019);
\draw(-0.927426605737,-0.242074373601)--(-0.918383099822,-0.260161385432);
\draw(-0.927426605737,-0.185185185185)--(-0.927426605737,-0.242074373601);
\draw(-0.800285005725,-0.185185185185)--(-0.927426605737,-0.185185185185);
\draw(-0.840954285046,-0.330966047797)--(-0.855605328548,-0.323640526046);
\draw(-0.840954285046,-0.330966047797)--(-0.830807885577,-0.341112447266);
\draw(-0.800285005725,-0.330966047797)--(-0.840954285046,-0.330966047797);
\draw(-0.769525139018,-0.361725914504)--(-0.788796531558,-0.361725914504);
\draw(-0.748731263,-0.372122852513)--(-0.759888532171,-0.372122852513);
\draw(-0.748731263,-0.372122852513)--(-0.734631914601,-0.376822635313);
\draw(-0.769525139018,-0.361725914504)--(-0.748731263,-0.372122852513);
\draw(-0.800285005725,-0.330966047797)--(-0.769525139018,-0.361725914504);
\draw(-0.800285005725,-0.185185185185)--(-0.800285005725,-0.330966047797);
\draw(-0.61509982054,0.)--(-0.800285005725,-0.185185185185);
\draw(-0.579442159473,-0.37348930198)--(-0.595588726937,-0.377525943846);
\draw(-0.579442159473,-0.37348930198)--(-0.569403793448,-0.37348930198);
\draw(-0.553737872803,-0.364921206423)--(-0.579442159473,-0.37348930198);
\draw(-0.553737872803,-0.364921206423)--(-0.536911669391,-0.364921206423);
\draw(-0.506195025591,-0.341149782817)--(-0.553737872803,-0.364921206423);
\draw(-0.472438036344,-0.341149782817)--(-0.487633892851,-0.348747711071);
\draw(-0.45571263817,-0.335574650093)--(-0.4460724716,-0.331718583465);
\draw(-0.472438036344,-0.341149782817)--(-0.45571263817,-0.335574650093);
\draw(-0.506195025591,-0.341149782817)--(-0.472438036344,-0.341149782817);
\draw(-0.38490017946,-0.219854936686)--(-0.506195025591,-0.341149782817);
\draw(-0.362281973967,-0.288604452394)--(-0.37342182646,-0.29528836389);
\draw(-0.341537793435,-0.274774998706)--(-0.362281973967,-0.288604452394);
\draw(-0.341537793435,-0.274774998706)--(-0.324000835751,-0.266006519864);
\draw(-0.286617731415,-0.219854936686)--(-0.341537793435,-0.274774998706);
\draw(-0.255704177846,-0.219929456308)--(-0.267121947655,-0.228492783665);
\draw(-0.255704177846,-0.219929456308)--(-0.245490696318,-0.213120468622);
\draw(-0.224641585033,-0.188866863495)--(-0.255704177846,-0.219929456308);
\draw(-0.184072585769,-0.161820863985)--(-0.20399897292,-0.181747251137);
\draw(-0.184072585769,-0.161820863985)--(-0.155733224936,-0.140566343361);
\draw(-0.224641585033,-0.188866863495)--(-0.184072585769,-0.161820863985);
\draw(-0.286617731415,-0.219854936686)--(-0.224641585033,-0.188866863495);
\draw(-0.38490017946,-0.219854936686)--(-0.286617731415,-0.219854936686);
\draw(-0.38490017946,0.)--(-0.38490017946,-0.219854936686);
\draw(-0.184072585769,0.161820863985)--(-0.155733224936,0.140566343361);
\draw(-0.184072585769,0.161820863985)--(-0.20399897292,0.181747251137);
\draw(-0.224641585033,0.188866863495)--(-0.184072585769,0.161820863985);
\draw(-0.255704177846,0.219929456308)--(-0.245490696318,0.213120468622);
\draw(-0.255704177846,0.219929456308)--(-0.267121947655,0.228492783665);
\draw(-0.224641585033,0.188866863495)--(-0.255704177846,0.219929456308);
\draw(-0.286617731415,0.219854936686)--(-0.224641585033,0.188866863495);
\draw(-0.341537793435,0.274774998706)--(-0.324000835751,0.266006519864);
\draw(-0.362281973967,0.288604452394)--(-0.37342182646,0.29528836389);
\draw(-0.341537793435,0.274774998706)--(-0.362281973967,0.288604452394);
\draw(-0.286617731415,0.219854936686)--(-0.341537793435,0.274774998706);
\draw(-0.38490017946,0.219854936686)--(-0.286617731415,0.219854936686);
\draw(-0.45571263817,0.335574650093)--(-0.4460724716,0.331718583465);
\draw(-0.472438036344,0.341149782817)--(-0.45571263817,0.335574650093);
\draw(-0.472438036344,0.341149782817)--(-0.487633892851,0.348747711071);
\draw(-0.506195025591,0.341149782817)--(-0.472438036344,0.341149782817);
\draw(-0.553737872803,0.364921206423)--(-0.536911669391,0.364921206423);
\draw(-0.579442159473,0.37348930198)--(-0.569403793448,0.37348930198);
\draw(-0.579442159473,0.37348930198)--(-0.595588726937,0.377525943846);
\draw(-0.553737872803,0.364921206423)--(-0.579442159473,0.37348930198);
\draw(-0.506195025591,0.341149782817)--(-0.553737872803,0.364921206423);
\draw(-0.38490017946,0.219854936686)--(-0.506195025591,0.341149782817);
\draw(-0.38490017946,0.)--(-0.38490017946,0.219854936686);

\end{scope}
\end{tikzpicture}
\caption{Tropical caustic of the bounded connected component of the complement to the real affine nodal cubic given by the equation $y^2=x^2(x+1)$.}
\label{fig-cubic}
\end{figure}
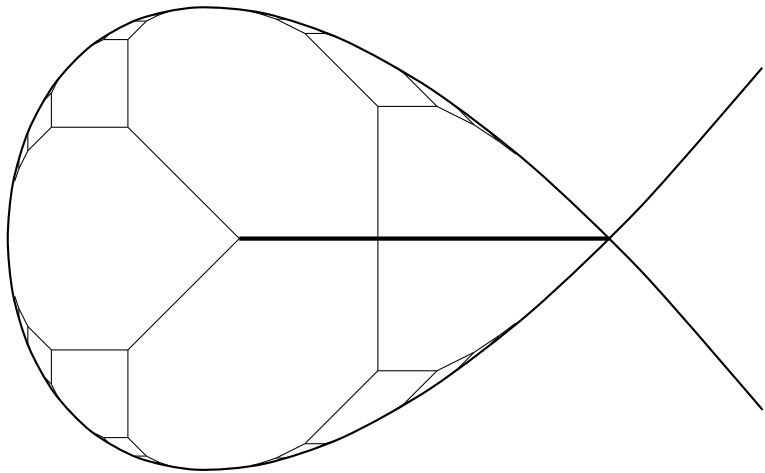
\input Fig-amoeba

Finally, Figure \ref{fig-amoeba} depicts the three unbounded caustics in the three components of the complement of the amoeba of a (trinomial) line in the plane. 

\ \\

\noindent{\bf\Large Part IV. Tropical geometry of continued fractions.}
\section{Continued fractions via tropical angles and their caustics}
Let us recall the notion of {\em tropical cotangent} from the beginning of our paper. If $R_1\subset\nr$ is a ray of rational slope, and $R_2\subset\nr$ is any ray (both emanating from the origin $O\in\nr$) then we can consider the primitive vector $e_1\in N$ parallel to the ray $R_1$ and any primitive vector $e_2\in N$ such that $(e_1,e_2)$ is a basis of $N$ and the intersection $(e_2+ R_1)\cap R_2=\{e_2+se_1\}$ is non-empty.
The choice of $e_2$ is well defined up to adding an integer multiple of $e_1$, thus its residue module 1 us well-defined. We set
$$
ta(R_1R_2)=-s\in\R/\Z.
$$
This quantity completely characterizes the {\em oriented tropical angle} $(R_1R_2)$, i.e. the cone $\Sigma\subset\nr$ generated by $R_1$ and $R_2$ with a choice of a boundary ray $R_1$ is $R_1$ is of rational slope. We may restate the reasoning above as the following statement.
\begin{proposition}
For every $s\in\R/\Z$ there exists a unique oriented tropical angles $(R_1R_2)$ such that $R_1$ is a ray of rational slope and $ta(R_1R_2)=-s$.
\end{proposition}


If $R_2$ is also of rational slope (i.e. $\Sigma$ is a rational tropical angle) then $ta(R_1R_2)\in\mathbb{Q}\slash\mathbb{Z},$ 
and we can exchange the roles of $R_1$ and $R_2$.

\begin{proposition} 
Suppose that $\Sigma\subset\nr$ is rational and $$ta(R_1R_2)={m\over n} \pmod 1$$ (for coprime $m,n\in\Z$, $m\neq 0$, $n>0$).
Then $n$ is the determinant of the tropical angle $\Sigma$, and $$ta(R_2R_1)={k\over n} \pmod 1,$$
where $mk=1 \pmod n.$
\label{prop_reverserays}
\end{proposition} 
Clearly we have $ta(R_1R_2)=0\in\R/\Z$ if and only if $\Sigma$ is a tropical right angle. In this case we also have $ta(R_2R_1)=0$.
\begin{proof}
Let $\nu_1,\nu_2\in N$ be the primitive vectors parallel to the rays $R_1,R_2$.
The equality $ta(R_1R_2)={m\over n} \pmod 1$ is equivalent to divisibility of $\nu_1-m\nu_2$ by $n$.
But then $$k(\nu_1-m\nu_2)=k\nu_1-\nu_2\pmod n,$$
and thus $ta(R_2R_1)={k\over n} \pmod 1$.
\end{proof}

We say that a tropical angle $\Sigma$ generated by rays $R_1$ and $R_2$ is {\em symmetric} if there exists a tropical automorphism exchanging the rays $R_1$ and $R_2$. 
\begin{corollary}
A rational tropical angle is symmetric if and only if $m^2=1\pmod n$, where $n$ is the determinant of $\Sigma$ and $ta(R_1R_2)=\frac mn\pmod 1$.
\end{corollary}
If $m^2=1\pmod n$, we set $ta(\Sigma)=ta(R_1R_2)=ta(R_2R_1)$.


The following statement is an immediate corollary of the definition of $A_{n-1}$-angles and their complementary, canonical, angles.
\begin{proposition} 
A rational tropical angle $\Sigma\subset\nr$ is an $A_{n-1}$-angle if and only if $ta(\Sigma)=\frac {n-1}n\pmod 1$.
It is a canonical  angle if and only if $ta(\Sigma)=\frac 1n\pmod 1$.
\end{proposition}

By Corollary \ref{An-single}, we can describe all tropical angles with the caustics consisting of single rays by numbers $\{\frac 1n\}_{n\in\N}\subset\R/\Z$.
It turns out that we can recover the caustics and their weights from $ta(R_1R_2)$ with the help of continued fractions.

\begin{theorem}Let $\Sigma$ be a cone with boundary rays $R_1$ and $R_2$ such that $R_1$ has a rational slope. Then, $$ta(R_1R_2)=\cfrac{1}{w_1+\cfrac{1}{s_1+\cfrac{1}{w_2+\cfrac{1}{s_2+
   \cdots\vphantom{\cfrac{1}{1}} }}}}\pmod 1,$$
where $w_1,\ w_2,\dots$ is a sequence of weights of rays $W_1,W_2,\dots$ in the tropical caustic $\mathcal{K}_\Sigma$ listed in the natural order starting from the one closest to $R_1$ and 
$s_j$ is the length gradient of the edge $E_j(t)$ in the tropical wave front $\Sigma(t)$ connecting $W_{j}$ and $W_{j+1}$.
\label{prop_regcontfrac}
\end{theorem}

If the second ray $R_2$ also has a rational slope, then this continued fraction is a finite expression with an odd total number of denominators.
Given $\alpha\in (0,1)$ we develop its continued fraction series by subtracting from $\alpha^{-1}$ its floor function $\lfloor \alpha^{-1}\rfloor$ (rounding down), and continuing with $\alpha^{-1}-\lfloor \alpha^{-1}\rfloor$.
If we start with a rational number then, at the last stage, $\alpha^{-1}$ is an integer number $n$. If needed, this number can also be expressed it as $(n-1)+\frac11$ to ensure that the number of denominators in the resulting continued fraction is odd. 

Alternatively, instead of rounding down in such a process, we can use the rounding up, i.e. the ceiling function $\lceil \alpha^{-1}\rceil$ to extract a positive integer from $\alpha$, and continue with $\lceil \alpha^{-1}\rceil-\alpha^{-1}$.
In the resulting continued fraction, all the pluses get replaced with minuses. Such continued fractions are known as Hirzebruch-Jung continued fractions and appear in the study of the resolutions of toric singularities. Indeed, a wave front can be seen as a partial resolution of singularities of a corresponding toric surface, where we keep only $A_n$ singularities. An edge $E(t)$ of the wave front $\Sigma(t)$ of length gradient $s$ corresponds to a boundary divisor with the value of the canonical class equal to $s>0$. The full minimal resolution of singularities would further resolve every $A_n$ singularity into a chain of $n$ divisors with the zero value of the canonical class (i.e. $(-2)$-spheres).
The following statement can be viewed as a restatement of Hirzebruch's theorem \cite{Hirz}.

\begin{theorem}
Let $\Sigma$ be a tropical angle different from the right angle with boundary rays $R_1$ and $R_2$ such that $R_1$ has a rational slope. Then,
$$ta(R_1R_2)=-\cfrac{1}{i_1-\cfrac{1}{i_2-\cfrac{1}{i_3-
   \cdots\vphantom{\cfrac{1}{1}} }}}\mod 1,
 $$
where $-i_1,-i_2,-i_3,\dots\le -2$ is the sequence of self-intersection numbers of the exceptional curves in the minimal resolution of singularities of the toric surface corresponding to the cone $\Sigma$ listed in the order starting from $R_1$. \footnote{Recall that in this paper we use the symplectic toric correspondence (moment map) rather than the spectrum of the semigroup correspondence (fan chars), so that the canonical tropical angle corresponds to the toric $A_n$-singularity, and not its dual $A_n$-angle.}
\label{prop_HJcontfrac}
\end{theorem}

The sequences of denominators of the two types of continued fractions are explicitly related in the following way. The sequence  $i_1,i_2,\dots$ is obtained from the sequence $w_1,s_1,w_2,s_2,\dots$ by replacing every $w_k$ with $w_k-1$ number of entries ``$2$" and every $s_k$ with a single entry ``$s_k+2$".

\begin{example}\label{example_fractions}
Let $\Sigma$ be the cone depicted on the left-hand side of Fig. \ref{1-3-caustic}, and oriented so that $R_1$ is its horizontal ray.
Then we have $$ta(R_1R_2)={4\over 7} \mod 1.$$ 
The caustic $\ks$ consists of two rays, one of weight $1$ and another of weight $3,$ while the wave front $\Sigma(t)$ has a single finite edge with the length gradient $1.$ Theorem \ref{prop_regcontfrac} agrees with the decomposition
$${4\over 7} =\cfrac{1}{1+\cfrac{1}{1+\cfrac{1}{3}}}\ \ \ ,$$ while
Theorem \ref{prop_HJcontfrac} agrees with the decomposition
$$-{3\over 7} =-\cfrac{1}{3-\cfrac{1}{2-\cfrac{1}{2}}}\ \ \ .$$ 
We get $w_1=1$, $s_1=1$, $w_2=3$, accordingly, $w_1=1$ does not contribute to the sequence $i_k$,
$i_1=s_1+2=3$, while $w_2=3$ contributes to $i_2=i_3=2$, see Fig. \ref{fig_examplefractions}.
Reversing the order of denominators in these fractions corresponds to reversing the orientation of the angle $\Sigma$.
We get $ta(R_2R_1)={2\over 7}\mod 1,$ which agrees with Proposition \ref{prop_reverserays} since $2\cdot 4=1\mod 7.$

Figure \ref{2fractions} illustrates the first continued fractions in this example by the picture on the left, and the second continued fraction by the one on the right.
Note the link between the two pictures. The broken line on the right is derived from the dotted broken line on the left, i.e. from the wave front of the corresponding tropical angle by resolution of the $A_{w-1}$-singularities (in this case, the $A_2$-singularity from the caustic ray of weight 3, this yields two $(-2)$-divisors), $i_2=i_3=2$, The $A_0$-singularity from the caustic ray of weight 1 is a smooth point, and does not contribute anything in the Hirzebruch-Jung continued fraction. Finally, the length gradient of the only finite edge of the dotted broken curve is $s_1=1$, this means that the value of the canonical class on the corresponding divisor is 1. Accordingly, its self-intersection is $-2-s_1=-3$, so that $i_1=3$.

The dashed lines on the right picture depict the caustic of the broken line. They are parallel to the caustic rays on the left. The weight 3 caustic ray yields three parallel (vertical) dashed lines on the right. These three lines bound two edges of the broken line of zero length gradient (thus of self-intersection $-2$). The weight 1 caustic ray corresponds to a single dashed line on the right. Together with the rightmost vertical dashed line it bounds the edge of the length gradient equal to 1 (and thus of self-intersection $-3$).
\end{example}
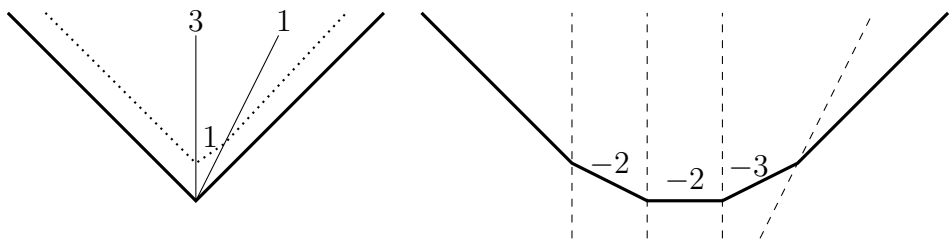
\begin{figure}
\label{fig_examplefractions}
\begin{tikzpicture}
\draw[very thick](-2.5,2.5)--(0,0)--(2.5,2.5);
\draw(0,0)--(0,2.2);
\draw(0,0)--(1.1,2.2);
\draw[thick, dotted](-2,2.5)--(0,0.5)--(0.38,0.76)--(2,2.5);
\node at (0,2.4) {$3$};
\node at (0.19,0.85) {$1$};
\node at (1.17,2.4) {$1$};
\draw[very thick](3,2.5)--(5,0.5)--(6,0)--(7,0)--(8,0.5)--(10,2.5);
\draw[dashed](5,-0.5)--(5,2.5);
\draw[dashed](6,-0.5)--(6,2.5);
\draw[dashed](7,-0.5)--(7,2.5);
\draw[dashed](7.5,-0.5)--(9,2.5);
\node at (5.5,0.49) {$-2$};
\node at (6.5,0.27) {$-2$};
\node at (7.35,0.45) {$-3$};
\end{tikzpicture}
\caption{\label{2fractions} A schematic depiction of denominators for continued fractions of two types in Example \ref{example_fractions}. On the left: the numbers are caustic ray weights and wave front length gradients; the denominators $1,1,3$ of the continued fraction of the first type are combined by alternation of these two sequences. On the right: the numbers denote self-intersections of the exceptional divisors in the complete resolution of the singularity; the denominators $3,2,2$ of the continued fraction of the second type are negatives of this sequence. In both cases, the numbers should be read counterclockwise. }
\end{figure}





\begin{proof}[Proof of Theorem \ref{prop_regcontfrac}]
If $\Sigma$ is a canonical angle, the theorem holds, since $ta(\Sigma)=\frac1n$, and $\ks$ consists of a single ray of weight $n$.

Otherwise, choose coordinates on $N_\mathbb{R}$, such that the boundary ray $R_1$ is the negative part of the ordinate and that the second ray $R_2$ belongs to the positive quadrant and has a slope $\alpha\in(0,1).$ Expand $\alpha$ in a regular continued fraction with a sequence of denominators $w_1,s_1,w_2,s_2,\dots$ which is infinite if $\alpha$ is irrational, or has an odd length if $\alpha$ is rational. 

Even tail truncations of this continued fraction correspond to the slopes of vertices of the tropical wave front of $\Sigma$ at time $1.$ We start this sequence of slopes with $0,$ which corresponds to the leftmost vertex $(1,0)$ of $\Sigma(1)$. 
The next slope is $$\cfrac{1}{w_1+\cfrac{1}{s_1}}$$ corresponding to the vertex $(w_1s_1+1,s_1).$  The edge connecting these two vertices is $(w_1s_1,s_1)$ and has the tropical length $s_1$ is a bounded edge of $\Sigma(1)$ of length gradient $s_1$.


The odd truncations of the continued fraction of $\alpha$ correspond to the slopes of vertices of a polygonal domain $P$ defined as the convex hull of all non-zero lattice points in the cone spanned by the negative ray of the abscissa axis and $R_2.$ 
The leftmost vertex $(0,1)$ of $P$ is followed by $(w_1,1)$
of slope ${1\over w_1}$.
The edge connecting these vertices is
parallel to the first ray of the caustic of $\Sigma$, while its tropical length is the weight of this ray. 


Denote by $p_0,p_1,\dots$ the sequence of all vertices of $P$ where their slopes decrease. Clearly, $p_0=(0,1),$ the sequence is infinite if $\alpha$ is irrational and finite if $\alpha$ is rational the slope of the last one is $\alpha,$ otherwise the sequence is finite with the last vertex having the slope $\alpha.$ Similarly, denote by $q_0,q_1,\dots$ the sequence of all vertices of $\Sigma(1)$ where their slopes increase. The first vertex $q_0$ is $(1,0)$ and the sequence is infinite if and only if $\alpha$ is irrational. Denote by $U_n$ the matrix with the first column equal to the transpose of $q_n$ and the second column equal to the transpose of $p_n.$  In particular, $U_0$ is the identity matrix.
\begin{lemma}\label{lem_regfractionmatrix} There is the following recursion
$$U_n=U_{n-1}\begin{pmatrix}w_n & 1 \\ 1 & 0  \end{pmatrix}\begin{pmatrix}s_n & 1 \\ 1 & 0  \end{pmatrix}.$$
In particular, $U_n$ is a unimodular matrix.
\end{lemma}
This is a version of the key recursive relation in Arnold's seminal exposition \cite{Arnold-cf} of continued fraction for high-school students (see also references therein). It is true by direct computation for $n=1$, while its verification for $n+1$ gets reduced to the same computation after changing the basis to the vectors given by $q_n$ and $p_n$ (which form a basis by induction). 

Thus the tropical lengths of a side of $P$ adjacent to vertices $p_{n-1}$ and $p_{n}$ is the same as the tropical length of the difference between the second columns in the identity matrix and in the product $$\begin{pmatrix}w_n & 1 \\ 1 & 0  \end{pmatrix}\begin{pmatrix}s_n & 1 \\ 1 & 0  \end{pmatrix}=\begin{pmatrix}w_ns_n+1 & w_n \\ s_n & 1  \end{pmatrix},$$ which gives $w_n.$ Similarly, the side of $\Sigma(1)$ adajcent to $q_{n-1}$ and $q_n$ has tropical length $s_n,$ computed from the difference between the first columns of the identity matrix and the matrix above. \end{proof}

\begin{remark}
The proof of Theorem \ref{prop_regcontfrac} exhibits the duality between the weights of the vertices of the wave front $\Sigma(1)$, and the length gradients of the bounded edges of $\Sigma^*(1)$, where $\Sigma^*$ is the dual (or complementary) angle, and vice versa.
This duality breaks for the first and the last number in the sequence (due to the odd length of the resulting continuous fraction expansion). It can be remedied with the help of the identity
$$1-{1\over {w_1+\beta}}=\cfrac{1}{1+\cfrac{1}{w_1-1+\beta}}.$$
Namely, if $w_1$ is greater than one, then we put $1$ for the first weight of the dual cone.
Then the first length gradient of the wave front of the dual cone is $w_1-1.$
If $w_1=1$ then one performs a reversed operation, seeing the first weight of the dual cone to be $s_1+1.$
The only two exceptions are the tropical angle of determinant 1 and 2 (the right angle and the $A_1$-angle) that are self-dual.
If $\Sigma$ is rational then we should do a similar procedure for the last weight in the sequence. See Figure \ref{fig_exdualangle} for an example.
\end{remark}
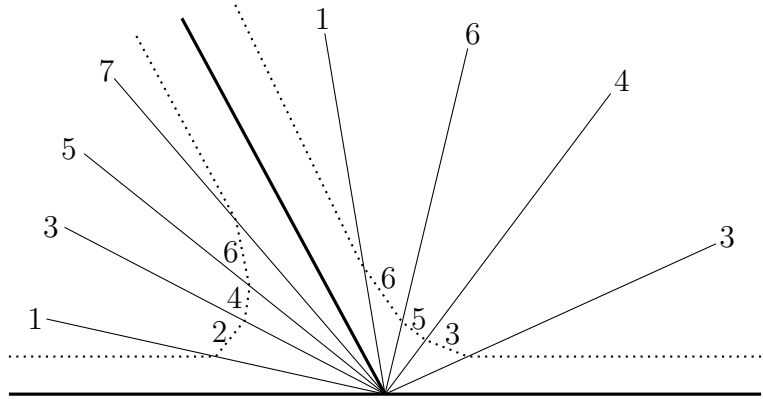
\begin{figure}
\begin{tikzpicture}
\draw[very thick](-5,0)--(5,0);
\draw[very thick](0,0)--(-2.7,5);

\draw(0,0)--(-4.5,1);
\draw(0,0)--(-4.26,2.22);
\draw(0,0)--(-4,3.2);
\draw(0,0)--(-3.6,4.2);
\draw[dotted, thick](-5,0.5)--(-2.25,0.5)--(-1.866,0.982)--(-1.8,1.42)--(-1.98,2.31)--(-3.33,4.81);

\node at (-4.65,1) {1};
\node at (-4.44,2.22) {3};
\node at (-4.2,3.26) {5};
\node at (-3.7,4.3) {7};

\node at (-2.2,0.83) {2};
\node at (-2,1.28) {4};
\node at (-2.05,1.9) {6};

\draw(0,0)--(-0.8,4.8);
\draw(0,0)--(1.1,4.6);
\draw(0,0)--(3,4);
\draw(0,0)--(4.4,2);

\draw[dotted,thick](5,0.5)--(1.1,0.5)--(0.54,0.72)--(0.229,0.958)--(-0.325,1.75)--(-2,5.2);

\node at (-0.83,5) {1};
\node at (1.17,4.8) {6};
\node at (3.15,4.17) {4};
\node at (4.55,2.1) {3};

\node at (0.9,0.8) {3};
\node at (0.45,1) {5};
\node at (0.05,1.55) {6};

\end{tikzpicture}
\caption{A schematic example of the duality between a corner and its complimentary at the level of weights of caustic rays and wave front gradients. The wave fronts at time one are depicted with dotted broken lines, the numbers beside finite segments are their tropical lengths.}
\label{fig_exdualangle}
\end{figure}

\begin{example}
If $d>2$ then we have $w_1=1,\ s_1=d-2,\ w_2=1$ for the $A_{d-1}$-angle, and simply $w_1=d$ for its dual, canonical angle.
\end{example}

\begin{proof}[Proof of Theorem \ref{prop_HJcontfrac}]
Choose coordinates in such a way that the ray $R_1$ is the negative part of the ordinate and $R_2$ belongs to the right half-plane having slope $\alpha\in(-1,0).$ Let $i_1,i_2,\dots$ be the denominators of the Hirzebruch-Jung continued fraction expansion of $\alpha.$ Consider the complementary cone spanned by rays $-R_1$ and $R_2,$ and let $P$ be the convex hull of non-zero lattice points in this cone. Consider the sequence $z_{-1},z_0,z_1,\dots$ of boundary lattice points of $P$ adjacent to finite edges in the order of decreasing slopes. It starts with $z_{-1}=(0,1)$ and $z_0=(1,0).$ 

As in the proof of Theorem \ref{prop_regcontfrac}, we show inductively that $z_{k-1}$, $z_k$ form a basis of $N$, and that $z_{k-2}+z_k=i_kz_{k-1}$.
Indeed, it is straightforward to verify that $z_1=(-1,i_1)$ is a lattice point on the boundary of $P$, and thus $z_0$, $z_1$ form a basis of $N$. After the coordinate change 
$$I_k=I_{k-1}\begin{pmatrix}i_k & 1 \\ -1 & 0  \end{pmatrix},$$
where $I_0$ is the identity matrix we reduce finding the next lattice point $z_{k+1}$ at $\dd P$ to finding the point $z_1$ at the first step. In the new coordinates, the ceiling function of the inverse to minus the slope of $z_{k+1}$ gives $i_{k+1}>1$.

The rays generated by $z_0$, $z_1$, \dots, give a subdivision of the tropical angle $(-R_1R_2)$ complimentary to $\Sigma$ (and thus isomorphic to $\Sigma^*$) into tropical right angles.
In the fan chart toric geometry correspondence, it yields a resolution of the toric surface given by $\Sigma^*$ via fans, i.e. by $\Sigma$ in the moment map language.
This resolution is minimal, as the self-intersection of the $k$th toric divisor in the resolution is $-i_k\le-2$.

\end{proof}

\bibliography{b}
\bibliographystyle{plain}

\end{document}